\documentclass[12pt,reqno]{amsart}
\topmargin -0.1cm
\advance \topmargin by -\headheight
\advance \topmargin by -\headsep
\setlength{\paperheight}{270mm}%
\textheight 22.5cm
\oddsidemargin 0.2cm
\evensidemargin \oddsidemargin
\marginparwidth 2cm
\textwidth 16.4cm

\usepackage{amsmath}
\usepackage{amsfonts}
\usepackage{stmaryrd}
\usepackage{amssymb}
\usepackage{csquotes}
\usepackage{amsthm,thmtools}

\usepackage{mathrsfs}
\usepackage{dsfont}
\usepackage{bm}
\usepackage{cite}
\usepackage{soul}

\numberwithin{equation}{section}
\renewcommand\vec{\bm}
\newcommand{\n}[1]{\|{#1}\|}
  %Additive Combinatorics notation
  %Additive Combinatorics notation
%\newcommand\ls{\lessapprox}  %Harmonic analysis notation?
%\newcommand\gs{\gtrapprox}    %Harmonic analysis notation?

\newtheorem{theorem}{Theorem}[section]

\newtheorem{lemma}[theorem]{Lemma}
\newtheorem{Proposition}[theorem]{Proposition}
\newtheorem{Conjecture}[theorem]{Conjecture}
\newtheorem{Corollary}[theorem]{Corollary}

\usepackage{mathtools}
\DeclarePairedDelimiter{\ceil}{\lceil}{\rceil}

\title[Unbounded expansion of polynomials and products]{Unbounded expansion of polynomials and products}
\author[Akshat Mudgal]{Akshat Mudgal}
\address{Mathematical Institute, University of Oxford, Oxford OX2 6GG, UK}
\email{mudgal@maths.ox.ac.uk}

\subjclass[2020]{11B13,  11B30,  11B83} 
% 11B13 - Additive bases, including sumsets
% 11B30 - Arithmetic combinatorics; higher degree uniformity
% 11B83 - Special sequences and polynomials
% 05D40 - Probabilistic methods in extremal combinatorics
\keywords{Sum-product phenomenon,  Unbounded growth,  Sidon sets}

\renewcommand\vec{\bm}

\begin{document}

\begin{abstract}
Given $d,s \in \mathbb{N}$, a finite set $A \subseteq \mathbb{Z}$ and polynomials $\varphi_1, \dots, \varphi_{s} \in \mathbb{Z}[x]$ such that $1 \leq \deg \varphi_i \leq d$ for every $1 \leq i \leq s$,
we prove that
\[ |A^{(s)}| + |\varphi_1(A) + \dots + \varphi_s(A) | \gg_{s,d} |A|^{\eta_s} , \]
 for some $\eta_s \gg_{d} \log s / \log \log s$. Moreover if $\varphi_i(0) \neq 0$ for every $1 \leq i \leq s$, then 
\[ |A^{(s)}| + |\varphi_1(A) \dots \varphi_s(A) | \gg_{s,d} |A|^{\eta_s}. \]
These generalise and strengthen previous results of Bourgain--Chang, P\'{a}lv\"{o}lgyi--Zhelezov and Hanson--Roche-Newton--Zhelezov. We derive these estimates by proving the corresponding low-energy decompositions. The latter furnish further applications to various problems of a sum-product flavour, including questions concerning large additive and multiplicative Sidon sets in arbitrary sets of integers.

\end{abstract}

\maketitle

%I THINK $C = 4(1 + \log s /\gamma)$ IN THE STATEMENT OF LEMMA $\REF{SP1}$, NOT THAT IT CHANGES MUCH, APART FROM PERHAPS IMPROVING THE IMPLICIT CONSTANTS

\section{Introduction}

A central theme in arithmetic combinatorics is the interplay of addition and multiplication over arbitrary sets of integers. In particular, a fundamental phenomenon in the area, which is exhibited and quantified in a rich collection of results, suggests that additive and multiplicative structures in finite sets of integers do not coincide with each other. Thus, given a natural number $s$ and a finite set $A \subseteq \mathbb{R}$, we define the $s$-fold sumset $sA$ and the $s$-fold product set $A^{(s)}$ of $A$ to be
\[ sA = \{ a_1 + \dots + a_s \ | \ a_1, \dots, a_s \in A\}  \ \text{and} \  A^{(s)} = \{ a_1 \dots a_s \ | \ a_1, \dots , a_s \in A \} \]
respectively. These can be seen to be standard measures of arithmetic structure, since whenever $A$ is an arithmetic progression, then $|A| \leq |sA| \ll_{s} |A|$, while whenever $A$ is a geometric progression, we have that $|A| \leq |A^{(s)}| \ll_{s} |A|$. Quantifying the aforementioned philosophy of the lack of coexistence between additive and multiplicative structure, Erd\H{o}s and Szemer\'{e}di \cite{ES1983} proposed the following conjecture. 

\begin{Conjecture} \label{esc}
For any $s \in \mathbb{N}$ and $\varepsilon >0$ and finite set $A \subseteq \mathbb{Z}$, we have
\[|sA| + |A^{(s)}| \gg_{s, \varepsilon} |A|^{s - \varepsilon} . \]
\end{Conjecture}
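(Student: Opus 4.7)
The plan is to attempt Conjecture \ref{esc} via the low-energy decomposition philosophy foreshadowed in the abstract. For a finite set $B \subseteq \mathbb{Z}$, define the $s$-fold additive and multiplicative energies
\[ E_s^+(B) = |\{ (b_1,\ldots,b_{2s}) \in B^{2s} : b_1 + \cdots + b_s = b_{s+1} + \cdots + b_{2s} \}| \]
and analogously $E_s^\times(B)$ with addition replaced by multiplication. Cauchy--Schwarz gives $|sB| \geq |B|^{2s}/E_s^+(B)$ and $|B^{(s)}| \geq |B|^{2s}/E_s^\times(B)$, so it would suffice to locate a subset $A' \subseteq A$ with $|A'| \geq |A|^{1-\varepsilon}$ on which either $E_s^+(A')$ or $E_s^\times(A')$ is at most $|A'|^{s+\varepsilon}$.

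My strategy for finding such a subset would be a two-step dyadic pigeonhole. First, refine $A$ to a subset $A_1$ on which the additive representation function $r_{sA_1}(x) = |\{ (a_1,\ldots,a_s) \in A_1^s : a_1+\cdots+a_s = x \}|$ takes a roughly constant value $T$ on its support; then refine further to $A_2 \subseteq A_1$ on which the analogous multiplicative representation function takes a roughly constant value $U$. Reformulating the resulting double constraint as an incidence problem between a point set in $\mathbb{Z}^2$ and a family of curves arising from $s$-fold sums or products, I would then invoke an iterated Szemer\'{e}di--Trotter type estimate, in the spirit of Elekes and of Solymosi, to force the product $T \cdot U$ to be essentially bounded and thereby extract the desired low-energy subset. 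A clean complementary argument based on the Pl\"{u}nnecke--Ruzsa inequality should then upgrade an energy bound at one multiplicative/additive level to the full $s$-fold statement.

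The main obstacle, and the reason the conjecture has resisted every attempt for four decades, is that each application of Pl\"{u}nnecke--Ruzsa or of an incidence bound introduces a loss that degrades the resulting exponent. Even in the case $s=2$, where one clean use of Szemer\'{e}di--Trotter is available, the best known estimate is only $|2A| + |A^{(2)}| \gg |A|^{4/3+c}$ for a small explicit constant $c$, and not $|A|^{2-\varepsilon}$. For larger $s$, iterated low-energy decomposition methods --- including the one employed in the present paper --- appear to be intrinsically limited to exponents of size $O(\log s / \log \log s)$, which is very far from the conjectural $s - \varepsilon$. I therefore expect my proposed plan to break down precisely when the compounded losses across the iteration are summed, and I suspect that any full proof of Conjecture \ref{esc} will require a genuinely new ingredient --- such as a uniform higher-dimensional incidence estimate, or a wholly non-incidence-theoretic input --- well beyond what current energy-decomposition techniques can deliver.
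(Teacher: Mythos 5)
The statement you were asked about is Conjecture \ref{esc}, the Erd\H{o}s--Szemer\'{e}di conjecture itself, which is open; the paper does not prove it, and indeed explicitly states that it ``remains wide open even in the case when $s=2$.'' What you have written is not a proof but a strategy sketch, and, as you yourself concede in your final paragraph, the strategy breaks down: there is therefore a genuine gap, namely the entire argument. Concretely, the step where you hope that a double dyadic pigeonhole plus ``an iterated Szemer\'{e}di--Trotter type estimate'' forces the product $T\cdot U$ of the additive and multiplicative representation multiplicities to be essentially bounded is exactly the step nobody knows how to carry out; incidence bounds of Szemer\'{e}di--Trotter type lose a fixed power at each application, and no amount of pigeonholing recovers the exponent $s-\varepsilon$. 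Your calibration of the obstruction is accurate: for $s=2$ the best known bound is of the shape $|A|^{4/3+c}$ for a small $c>0$, and for large $s$ the energy-decomposition machinery (including the present paper's) is believed to be capped at exponents of order $\log s/\log\log s$, in view of examples such as \cite[Proposition 1.5]{PZ2020} and the constructions behind Proposition \ref{bwex}.

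For comparison, the paper's actual contribution relative to Conjecture \ref{esc} is the partial result of Theorem \ref{th3} and Corollary \ref{th1}: a low-energy decomposition $A=B\cup C$ with $E_{s,\vec{\varphi}}(B)\ll_{s,d}|B|^{2s-\eta_s}$ and $M_s(C)\ll_{s,d}|C|^{2s-\eta_s}$ for some $\eta_s\gg_d \log s/\log\log s$, obtained not by incidence geometry but by a Chang-type decoupling lemma for polynomial exponential sums, query-complexity bounds, and inverse theorems (a Balog--Szemer\'{e}di--Gowers variant and a structure result for sets with small asymmetric product sets) applied iteratively. If you wish to make progress of the type the paper achieves, that is the toolkit to study; a proof of the full conjecture would, as you correctly suspect, require a genuinely new idea beyond both your incidence-based plan and the paper's energy-decomposition framework.
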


While this conjecture remains wide open even in the case when $s=2$, there have been some breakthrough results in this direction, including the beautiful work of Bourgain--Chang \cite{BC2004} that delivers the bound
\begin{equation} \label{bclb}
 |sA| + |A^{(s)}| \gg_{s} |A|^{(\log s)^{1/4}}
 \end{equation}
in the setting of Conjecture $\ref{esc}$.  We refer to this type of an estimate as exhibiting unbounded expansion since the exponent $(\log s)^{1/4} \to \infty$ as $s \to \infty$.  Furthermore,  this has since been quantitatively improved only once, wherein the exponent $(\log s)^{1/4}$ has been upgraded to $(\log s)^{1 - o(1)}$  by P\'{a}lv\"{o}lgyi--Zhelezov \cite{PZ2020}. 
\par

The sum-product phenomenon has expanded vastly since the work of Erd\H{o}s and Szemer\'{e}di \cite{ES1983}, and now encompasses a variety of results which highlight an incongruence between many different types of arithmetic structures. Despite this, there has only been one other result which allows for unbounded expansion, which is due to Hanson, Roche-Newton and Zhelezov \cite{HRZ2020}. In particular, they showed that for any finite set $A$ of integers and for any $u \in \mathbb{Z} \setminus \{0\}$ and $s\in \mathbb{N}$, one has
\begin{equation} \label{hrnz1}
 |A^{(s)}| + |(A+u)^{(s)}| \gg_{s} |A|^{(\log s)^{1/2 - o(1)}} .
 \end{equation}
%Thus, sets with few products exhibit unbounded growth under addition as well as under products of shifts.
\par

Recently, an inquiry into a stronger version of sum-product type results was put forth by Balog--Wooley \cite{BW2017}. In order to present this, we first present some definitions, and thus, given $\vec{\varphi} = (\varphi_1, \dots, \varphi_{2s}) \in (\mathbb{Z}[x])^{2s}$, we define the mixed energies
\[ E_{s, \vec{\varphi}}(A) = | \{ (a_1, \dots, a_{2s}) \in A^{2s} \ | \ \varphi_1(a_1) + \dots + \varphi_s(a_s) = \varphi_{s+1}(a_{s+1}) + \dots + \varphi_{2s}(a_{2s}) \}| \]
and
\[ M_{s, \vec{\varphi}}(A) = | \{ (a_1, \dots, a_{2s}) \in A^{2s} \ | \ \varphi_1(a_1)  \dots  \varphi_s(a_s) = \varphi_{s+1}(a_{s+1})  \dots  \varphi_{2s}(a_{2s}) \}| . \]
When $\varphi_1(x) = \dots = \varphi_{2s}(x) = x$ for every $x \in \mathbb{Z}$, we write $E_{s}(A) = E_{s, \vec{\varphi}}(A)$ and $M_{s}(A) = M_{s, \vec{\varphi}}(A)$. A standard application of the Cauchy-Schwarz inequality then gives us that
\begin{equation} \label{csap}
 |sA| \geq |A|^{2s} E_{s}(A)^{-1} \ \ \text{and} \ \ |A^{(s)}| \geq |A|^{2s} M_{s}(A)^{-1},
 \end{equation}
for every $s \in \mathbb{N}$ and every finite $A \subseteq \mathbb{Z}$. Noting this along with Conjecture $\ref{esc}$, it is natural to expect that for every finite $A \subseteq \mathbb{Z}$, one may write $A = B \cup C$, with $B,C$ disjoint,  such that 
\begin{equation} \label{defld}
 E_{s}(B) \ll_{s} |A|^{2s - 1 - c_s} \ \ \text{and} \ \ M_{s}(C) \ll_{s} |A|^{2s - 1 -c_s} ,
 \end{equation}
for some $c_s>0$. This was termed by Balog and Wooley as a \emph{low-energy decomposition}, who proved the first such result with $c_s < 2/33$. In view of Conjecture $\ref{esc}$, they further speculated that one should be able to take $c_s \to \infty$ as $s \to \infty$. Moreover, while many works have subsequently improved admissible exponents in $\eqref{defld}$, till recently, it was still not known whether one may choose $c_s \to \infty$ as $s \to \infty$. In our recent work \cite{Mu2021c}, this speculation was confirmed quantitatively with \cite[Corollary $1.3$]{Mu2021c} delivering the bound $c_s \gg (\log \log s)^{1/2 - o(1)}$. The latter was further quantitatively improved by Shkredov \cite{Shk2022} to $c_s \gg (\log s)^{1/2 - o(1)}$, and it is widely believed that this type of method limits to delivering exponents of the shape $c_s \ll \log s/ \log \log s$, due to examples of the form \cite[Proposition 1.5]{PZ2020}.
\par

%Furthermore, while many results of the shape $\eqref{defld}$ had been proven since then, in part due to their connection with improvements in the admissible exponents in the sum-product conjecture, till recently, it was still not known whether one may choose $c_s \to \infty$ as $s \to \infty$. In our recent work \cite{Mu2021c}, this speculation was finally confirmed to be true with \cite[Corollary $1.3$]{Mu2021c} delivering the bound $c_s \gg (\log \log s)^{1/2 - o(1)}$. This was further quantitatively improved by Shkredov who proved that $c_s \gg (\log s)^{1/2 - o(1)}$. 
%\par

Our main aim of this paper is to generalise all of the above sum-product results for additive and multiplicative equations over polynomials, and achieve the speculated exponent of $\log s/\log\log s$ in each of these problems.
%, which, amongst other things, suggests that multiplicatively structured sets have few solutions to equations which consist of either sums or products of polynomials
This is recorded in our main result below. 

\begin{theorem}  \label{th3}
For any $d,s \in \mathbb{N}$, there exists some $\eta_s \gg_{d} \log s / \log \log s$ such that the following holds true. Any finite set $A \subseteq \mathbb{Q}$ may be written as $A = B \cup C$, for disjoint sets $B,C$, such that for any $\vec{\varphi} \in (\mathbb{Z}[x])^{2s}$ 
%satisfying $1 \leq \deg \varphi_i \leq d$ for each $1 \leq i \leq 2s$
satisfying $1 \leq \deg \varphi_1, \dots, \deg \varphi_{2s} \leq d$, we have
\begin{equation} \label{cht1}
 E_{s, \vec{\varphi}}(B) \ll_{s,d} |B|^{2s - \eta_s} \ \ \text{and} \ \ M_{s}(C) \ll_{s,d} |C|^{2s - \eta_s}.
 \end{equation}
Moreover, if $\varphi_i(0) \neq 0$ for each $1 \leq i \leq 2s$, then 
 \begin{equation} \label{cht2}
  M_{s,  \vec{\varphi}}(B) \ll_{s, d} |B|^{2s - \eta_s}. 
  \end{equation}
\end{theorem}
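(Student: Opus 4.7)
My plan is to prove both parts of the decomposition simultaneously via a greedy peeling scheme driven by a \emph{structural transfer lemma} to the effect that multiplicative concentration of a set forces polynomial sum-additive dissociation, and vice versa when the $\varphi_i$ have nonzero constant term. Thus we would iteratively extract dyadic level sets of the multiplicative $s$-energy from $A$, throwing those into $B$ (where transfer then bounds the polynomial additive/multiplicative energy), and leaving the remainder -- which by design has small $M_s$ -- as $C$.

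The first core step is the transfer lemma itself: if $X \subseteq \mathbb{Q}$ satisfies $M_s(X) \geq |X|^{2s-1}/K$, then $X$ contains a large subset $Y$ with
\[ E_{s,\vec\varphi}(Y) \ll_{s,d} K^{O(1)}\, |Y|^{2s - \eta_s} \]
for all $\vec\varphi$ whose components have degrees in $[1,d]$, and likewise the multiplicative version if $\varphi_i(0) \neq 0$. The idea is that high multiplicative energy forces $X$ to be highly concentrated in a ``geometric-progression-like'' hierarchy in the sense of P\'alv\"olgyi--Zhelezov, which can be exploited iteratively: plug this nested structure into a polynomial incidence or Bezout-type bound to show that the values $\varphi_i(x)$ can only coincide additively or multiplicatively along a thin substructure. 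Concretely, I would try a degree reduction -- factor $\varphi_i$ into a constant times a product of linear factors over $\overline{\mathbb{Q}}$ and estimate additive coincidences one linear factor at a time -- combined with a Pl\"unnecke--Ruzsa bookkeeping to convert multiplicative-energy hypotheses into multiplicative covering information about the whole of $X$.

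Once the transfer lemma is in hand, the decomposition is extracted greedily. Set $C_0 = A$, $B_0 = \emptyset$; at stage $j$, test whether $M_s(C_j) \geq |C_j|^{2s-\eta_s}$. If not, we halt and output $C = C_j$, $B = B_j$. If so, a standard dyadic pigeonholing locates $X_j \subseteq C_j$ of size comparable to $|C_j|$ with $M_s(X_j) \geq |X_j|^{2s-1}/K_j$ for $K_j \ll |X_j|^{1-\eta_s}$; applying the transfer lemma gives a large $Y_j \subseteq X_j$ satisfying the desired polynomial energy bounds, which we add to $B_{j+1}$ and remove from $C_{j+1}$. The size of $C_j$ drops by a definite factor at each stage, so the process terminates after $O(\log|A|)$ steps, and since the transfer bounds are uniform over all admissible $\vec\varphi$, summing the contributions of the $Y_j$'s gives the announced estimates on $E_{s,\vec\varphi}(B)$ and, under the nonvanishing hypothesis, $M_{s,\vec\varphi}(B)$.

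The hard step is clearly the transfer lemma with the right exponent. Shkredov's $(\log s)^{1/2 - o(1)}$ already needs a careful iteration of the convex-hierarchy argument, and to hit $\log s/\log\log s$ -- the barrier suggested by \cite[Proposition~1.5]{PZ2020} -- one must use the full depth of the hierarchy rather than just a single energy step, and keep the polynomial degree parameter $d$ absorbed into the implicit constant rather than into the exponent. The secondary difficulty is handling polynomials of arbitrary degree uniformly: roots in $\overline{\mathbb{Q}}$ may cluster, and $\varphi_i(0) \neq 0$ is needed precisely to prevent trivial multiplicative collapses, so the multiplicative half of the transfer lemma will require a Bezout-plus-Pl\"unnecke argument that I expect to be the most delicate calculation in the proof.
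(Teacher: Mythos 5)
Your outer scheme (peel subsets of large multiplicative energy into $B$, stop when the remainder has small $M_s$, control the number of rounds) is essentially the architecture of the paper's proof in \S7, so the decomposition itself is not the issue. The issue is that the whole content of the theorem sits inside your ``transfer lemma'', and the route you sketch for it would not work. Factoring each $\varphi_i$ into linear factors over $\overline{\mathbb{Q}}$ gives no handle on the \emph{additive} equation $\varphi_1(a_1)+\dots+\varphi_s(a_s)=\varphi_{s+1}(a_{s+1})+\dots+\varphi_{2s}(a_{2s})$: a multiplicative factorization of each summand does not decompose a sum, so ``estimating additive coincidences one linear factor at a time'' has no meaning for $E_{s,\vec{\varphi}}$. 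More fundamentally, Bezout/incidence-type inputs yield savings of a bounded power of $|Y|$, depending on $d$ but not on $s$, whereas you need $E_{s,\vec{\varphi}}(Y)\ll_{s,d} |Y|^{2s-\eta_s}$ with $\eta_s\to\infty$; no argument that processes the $2s$-variable equation through bounded-degree curve incidences can produce savings growing with $s$. What actually delivers the theorem is a Chang-type decoupling in prime valuations: the inverse results (Theorem~\ref{th46} together with Lemma~\ref{cmb}) convert large $M_s$ into a large subset of small query complexity \emph{and} small iterated product sets, and then Lemma~\ref{chang}/Lemma~\ref{ayay} (and, for $M_{s,\vec{\varphi}}$, Lemmata~\ref{ob2} and \ref{trut}--\ref{tst}, which in addition require the product-set bounds $|B_i^{(m)}/B_i^{(n)}|$ that the iteration must record) show that the polynomial energies of such a piece are essentially diagonal, of size roughly $(\sum_a \frak{a}(a)^2)^s$ up to $K^{O_d(s)}(\log |A|)^{O(s)}$ losses. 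Your sketch never produces this diagonal behaviour, and in particular gives no mechanism for the multiplicative bound \eqref{cht2}, which needs both the averaging loss $|A^{(s)}/A^{(s)}|$ and the hypothesis $\varphi_i(0)\neq 0$ in the valuation argument, not merely ``to prevent trivial collapses''.

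There are also secondary gaps in the bookkeeping. The claim that $|C_j|$ drops by a definite factor, so the process halts after $O(\log|A|)$ steps, is unjustified: BSG-type extractions only give pieces of size $|C_j|/K_j^{O(1)}$, i.e.\ a power $|C_j|^{1-o(1)}$ when $K_j$ is a small power of $|C_j|$, so one needs an argument such as Lemma~\ref{tym} to bound the number of rounds $r$ by a small power of $|A|$. Relatedly, ``summing the contributions of the $Y_j$'s'' is not how energies combine: one must invoke the H\"older-type sub-additivity of Lemma~\ref{reprove} and Lemma~\ref{wm}, which costs a factor $r^{2s}$, and this is affordable only because $r$ is a sufficiently small power of $|A|$ relative to $\eta_s$ --- a point your proposal does not address. (Minor: the condition $K_j\ll |X_j|^{1-\eta_s}$ should read $K_j\ll |X_j|^{\eta_s-1}$, and the reduction from $\mathbb{Q}$ to positive integers, including the zero set $\mathcal{Z}_{\vec{\varphi}}$ and signs, also needs to be carried out explicitly for the multiplicative statements.)
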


A first remark about Theorem $\ref{th3}$ is that it automatically recovers any sumset-product set type estimate of the form $\eqref{bclb}$. In particular, writing 
\[ X+ Y = \{ x+ y \ | \ x \in X, \ y \in Y\} \ \ \text{and} \  \ X\cdot Y= \{ x \cdot y \ | \ x \in X, \ y \in Y\} \]  
for any finite sets $X, Y\subseteq \mathbb{R},$ we see that since $\max \{|B|, |C|\} \geq |A|/2$ in the conclusion of Theorem $\ref{th3}$, we may apply Cauchy-Schwarz inequality, in a manner akin to $\eqref{csap}$, to obtain the following estimate on sumsets and product sets.

\begin{Corollary} \label{th1}
For any $d,s \in \mathbb{N}$ and any $\varphi_1, \dots, \varphi_s \in \mathbb{Q}[x]$ such that $1 \leq \deg \varphi_i \leq d$ for each $1 \leq i \leq s$ and for any finite set $A \subseteq \mathbb{Q}$, we have that
\[ |A^{(s)}| + |\varphi_1(A) + \dots + \varphi_s(A) | \gg_{s,d} |A|^{\eta_s} , \]
 for some $\eta_s \gg_{d} \log s / \log \log s$. Moreover if $\varphi_i(0) \neq 0$ for every $1 \leq i \leq s$, then
\[ |A^{(s)}| + |\varphi_1(A) \dots \varphi_s(A) | \gg_{s,d} |A|^{\eta_s}. \]
\end{Corollary}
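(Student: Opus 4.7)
The plan is to deduce Corollary \ref{th1} from Theorem \ref{th3} via a standard application of the Cauchy--Schwarz inequality, following the recipe encoded in \eqref{csap}.

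First, I would normalise the setup so that Theorem \ref{th3} applies directly. Given $\varphi_1, \dots, \varphi_s \in \mathbb{Q}[x]$, I choose a positive integer $N$ clearing all denominators, so that the polynomials $\psi_i = N\varphi_i \in \mathbb{Z}[x]$ satisfy $1 \leq \deg \psi_i \leq d$, preserve the condition $\psi_i(0) \neq 0$ whenever $\varphi_i(0) \neq 0$, and satisfy
\[ |\psi_1(A) + \dots + \psi_s(A)| = |\varphi_1(A) + \dots + \varphi_s(A)| \quad \text{and} \quad |\psi_1(A) \cdots \psi_s(A)| = |\varphi_1(A) \cdots \varphi_s(A)|, \]
so there is no loss of generality in assuming $\varphi_i \in \mathbb{Z}[x]$ from the outset. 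The next step is to apply Theorem \ref{th3} with the tuple $\vec{\varphi} = (\varphi_1, \dots, \varphi_s, \varphi_1, \dots, \varphi_s) \in (\mathbb{Z}[x])^{2s}$, which produces a partition $A = B \cup C$ with $B,C$ disjoint such that $E_{s, \vec{\varphi}}(B) \ll_{s,d} |B|^{2s - \eta_s}$ and $M_s(C) \ll_{s,d} |C|^{2s - \eta_s}$, and moreover $M_{s, \vec{\varphi}}(B) \ll_{s,d} |B|^{2s - \eta_s}$ in the case when every $\varphi_i(0) \neq 0$.

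With this decomposition in hand, the pigeonhole principle yields $\max\{|B|,|C|\} \geq |A|/2$, and the proof then splits into two symmetric cases. If $|B| \geq |A|/2$, I would run the Cauchy--Schwarz argument behind \eqref{csap} on the representation function $r(t) = |\{(a_1, \dots, a_s) \in B^s : \varphi_1(a_1) + \dots + \varphi_s(a_s) = t\}|$ to derive
\[ |B|^{2s} = \Bigl(\sum_{t} r(t)\Bigr)^{2} \leq |\varphi_1(B) + \dots + \varphi_s(B)| \cdot E_{s, \vec{\varphi}}(B), \]
so that $|\varphi_1(A) + \dots + \varphi_s(A)| \geq |\varphi_1(B) + \dots + \varphi_s(B)| \gg_{s,d} |B|^{\eta_s} \gg_{s,d} |A|^{\eta_s}$, using $B \subseteq A$. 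If instead $|C| \geq |A|/2$, the analogous Cauchy--Schwarz argument applied to $M_s(C)$ delivers $|A^{(s)}| \geq |C^{(s)}| \gg_{s,d} |C|^{\eta_s} \gg_{s,d} |A|^{\eta_s}$. The multiplicative product-set variant is handled identically, using the bound \eqref{cht2} on $M_{s, \vec{\varphi}}(B)$ to control $|\varphi_1(B) \cdots \varphi_s(B)|$ from below when $|B| \geq |A|/2$, and the bound on $M_s(C)$ as before otherwise.

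Since the proof is essentially a bookkeeping exercise built around the machinery of Theorem \ref{th3}, I do not foresee any genuine obstacle; the only minor care needed is to ensure that the tuple $\vec{\varphi}$ fed into Theorem \ref{th3} is consistent with the Cauchy--Schwarz identity used to recover the desired sumset or product set, which is why I duplicate the polynomials into a $2s$-tuple of the form $(\varphi_1, \dots, \varphi_s, \varphi_1, \dots, \varphi_s)$. All of the real difficulty is absorbed into Theorem \ref{th3} itself.
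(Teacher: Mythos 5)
Your proposal is correct and follows essentially the same route as the paper: apply Theorem \ref{th3} (after clearing denominators to reduce to $\mathbb{Z}[x]$, and duplicating the polynomials into a $2s$-tuple), note $\max\{|B|,|C|\}\geq |A|/2$, and run the Cauchy--Schwarz argument of \eqref{csap} on whichever of $B,C$ is large, using $\varphi_1(B)+\dots+\varphi_s(B)\subseteq \varphi_1(A)+\dots+\varphi_s(A)$ and $C^{(s)}\subseteq A^{(s)}$. This is exactly the deduction the paper sketches in the paragraph preceding Corollary \ref{th1}.
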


Note that upon setting $\varphi_1, \dots, \varphi_s$ to be the same appropriately chosen linear polynomial in Corollary $\ref{th1}$, we are able to automatically recover the best known bound for Conjecture $\ref{esc}$ that is presented in \cite{PZ2020} as well as quantitatively strengthen the main result of \cite{HRZ2020}. In fact, Corollary $\ref{th1}$ presents the first set of bounds wherein one may choose a variety of different polynomials $\varphi_1, \dots, \varphi_s$ with unlike but bounded degrees and still obtain unbounded expansion. We further note that these so-called low-energy decompositions are much stronger than the corresponding sumset-product set type estimates. For instance, bounds of the form $\eqref{cht2}$ are new even in the situation when, say, $\varphi_1(x) = \dots = \varphi_{2s}(x) = c x + d$ for every $x \in \mathbb{Z}$, for some fixed $c,d \in \mathbb{Z}\setminus\{0\}$, while the corresponding sumset-product set type estimate was already proven, albeit with a quantitatively weaker exponent, in \cite{HRZ2020}.  Moreover, Theorem $\ref{th3}$ allows us to makes progress on some other problems with a sum-product flavour, which themselves generalise estimates of the form $\eqref{bclb}$. We describe one such result below.
\par

%, which concerns finding large additive and multiplicative Sidon sets in arbitrary sets of integers.

%Moreover, Theorem $\ref{th3}$ can be used to prove various other results which encapsulate the aforementioned unbounded expansion phenomenon, and in particular, we describe one such result below. 

%Theorem $\ref{th3}$ also allows us to makes progress on some other problems with a sum-product flavour.
 Thus, given $\varphi \in \mathbb{Z}[x]$, we denote a finite set $X \subseteq \mathbb{Z}$ to be a $B_{s, {\varphi}}^+[1]$ set if for every $n \in \mathbb{Z}$, there is at most one distinct solution to the equation
\[ n = \varphi(x_1)+ \dots + \varphi(x_s ) ,\]
with $x_1, \dots, x_s \in X$. Here, we consider two such solutions to be the same if they
differ only in the ordering of the summands. Similarly, we denote $X$ to be a $B_{s, \varphi}^{\times}[1]$ set if for every $n \in \mathbb{Z}$, there is at most one distinct solution to the equation
\[ n = \varphi(x_1) \dots \varphi(x_s)  ,\]
with $x_1, \dots, x_s \in X$. When ${\varphi}$ equals the identity map, then a $B_{s, {\varphi}}^+[1]$ set is known as an \emph{additive Sidon set}, which we will denote as a $B_{s}^+[1]$ set. We define a $B_{s}^{\times}[1]$ set in an analogous fashion, and we refer to this as a \emph{multiplicative Sidon set}. These objects play a central role in the field of combinatorial number theory, and there has been a rich line of work investigating their various properties,  see \cite{Ci2001} and the references therein. While the main such inquiry often surrounds the size of the largest such set which is contained in an ambient set of integers, such as, say $\{1, 2, \dots, N\}$, this has recently been generalised to the setting of finding $B_{s}^+[1]$ and $B_s^{\times}[1]$ sets in arbitrary finite sets of integers, with the hope of having the size of at least one of them being much closer to the size of the ambient set, see \cite{JM2022, Po2021}. For instance, it is known that any finite set $A \subseteq \mathbb{Z}$ contains a $B_{s}^{+}[1]$ set of size at least $c_s' |A|^{1/s}$ for some $c_s' >0$ (see \cite{KSS1975},  \cite{Ru1995}), and this is sharp, up to multiplicative constants, by setting $A = \{1, \dots, N\}$. On the other hand,  $\{1, \dots, N\}$ contains large $B_{s}^{\times}[1]$ sets of size at least $N(\log N)^{-1}(1 - o(1))$, consider, for example, the set of primes up to $N$.
\par

It was shown in \cite{JM2022} that results akin to Theorem $\ref{th3}$ may be employed to make progress on such problems, and in this paper, we record some further improvements along this direction by proving the following result.

\begin{theorem} \label{sid1} 
For any $d, s \in \mathbb{N}$, there exists a parameter $\delta_s \gg_{d} \log s/ \log \log s$ such that the following holds true. Given any finite set $A \subseteq \mathbb{Z}$ and any ${\varphi}  \in \mathbb{Z}[x]$ such that $\deg \varphi = d$, the largest $B_{s, {\varphi}}^+[1]$ subset $X$ of $A$ and the largest $B_{s}^{\times}[1]$ subset $Y$ of $A$ satisfy
\[ \max\{|X|,|Y|\} \gg_{s,d} |A|^{\delta_s/s}. \]
Moreover if $\varphi(0) \neq 0$, then writing $X'$ to be the largest $B_{s, \varphi}^{\times}[1]$ subset of $A$, we have
\[ \max\{|X'|, |Y|\} \gg_{s,d} |A|^{\delta_s/s}. \]
\end{theorem}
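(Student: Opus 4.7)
The plan is to derive Theorem \ref{sid1} from Theorem \ref{th3} by combining the latter with a standard probabilistic extraction of Sidon-type subsets from sets of low energy. After pre-processing $A$ by removing $0$ and any integer roots of $\varphi$ (losing at most $d+1$ elements), I would set $\vec{\varphi} := (\varphi, \dots, \varphi) \in (\mathbb{Z}[x])^{2s}$ and apply Theorem \ref{th3} to obtain a partition $A = B \cup C$ (with $B,C$ disjoint) satisfying $E_{s,\vec{\varphi}}(B) \ll_{s,d} |B|^{2s - \eta_s}$ and $M_{s}(C) \ll_{s,d} |C|^{2s - \eta_s}$, where $\eta_s \gg_d \log s / \log \log s$. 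Pigeonhole then gives $\max\{|B|, |C|\} \geq |A|/2$, and by further restricting to a subset of size at least $|B|/d$ on which $\varphi$ is injective (possible since $\varphi$ has at most $d$ preimages of every value), I may assume this injectivity throughout.

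Next, in whichever of the two cases applies, I would run the following standard probabilistic alteration on the large part (say $B$): sample each element independently with probability $p$, then delete one element from each surviving $2s$-tuple realising a \emph{non-trivial} collision $\varphi(x_1)+\dots+\varphi(x_s) = \varphi(y_1)+\dots+\varphi(y_s)$ with the multisets $\{x_1,\dots,x_s\}$ and $\{y_1,\dots,y_s\}$ distinct. The count of such non-trivial tuples is at most $E_{s,\vec{\varphi}}(B) \ll_{s,d} |B|^{2s - \eta_s}$, so balancing the expected survival $p|B|$ against the expected number $p^{2s} E_{s,\vec{\varphi}}(B)$ of bad surviving tuples via $p \asymp (|B|/E_{s,\vec{\varphi}}(B))^{1/(2s-1)}$ yields a $B_{s,\varphi}^{+}[1]$ subset $X \subseteq B$ of size $\gg_{s,d} |B|^{\eta_s/(2s-1)} \gg_{s,d} |A|^{\eta_s/(2s-1)}$. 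The symmetric case $|C| \geq |A|/2$ is handled identically with sums replaced by products and $M_s(C) \ll_{s,d} |C|^{2s - \eta_s}$, producing a $B_{s}^{\times}[1]$ subset $Y \subseteq C$ of the same order. Setting $\delta_s := s \eta_s / (2s-1)$ (which still satisfies $\delta_s \gg_d \log s / \log \log s$) then delivers the first claim. For the moreover statement, Theorem \ref{th3} additionally supplies $M_{s,\vec{\varphi}}(B) \ll_{s,d} |B|^{2s - \eta_s}$ whenever $\varphi(0) \neq 0$, so in the case $|B| \geq |A|/2$ I would run the analogous multiplicative extraction on $B$ to produce a $B_{s,\varphi}^{\times}[1]$ subset $X'$ of the required size; the case $|C| \geq |A|/2$ is handled exactly as before.

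The principal technical point to verify is the separation of trivial from non-trivial tuples in the relevant energies: the count of trivial $2s$-tuples (those arising from identical multisets on both sides) is only $O_s(|B|^s)$, which is dominated by $|B|^{2s - \eta_s}$ in the regime $\eta_s \ll \log s / \log \log s \ll s$, so the probabilistic extraction closes without issue. Aside from this bookkeeping, the main content of the result is precisely the low-energy decomposition already supplied by Theorem \ref{th3}; everything else here is standard once that input is in hand.
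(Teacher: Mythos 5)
Your top-level strategy coincides with the paper's: apply Theorem \ref{th3} with $\vec{\varphi}=(\varphi,\dots,\varphi)$, note $\max\{|B|,|C|\}\geq |A|/2$, and extract the required Sidon-type subset from whichever part is large (the injectivity/preimage bookkeeping you do for $\varphi$ is also present in the paper, which picks one preimage $b_x$ for each $x$ in a Sidon subset of $\varphi(B_1)$, after splitting by sign so that the extraction lemma applies). The difference, and the problem, is the extraction step itself. Your probabilistic deletion argument is not correct as stated: the expected number of surviving non-trivial collisions is not $p^{2s}E_{s,\vec{\varphi}}(B)$, because a non-trivial solution of $\varphi(x_1)+\dots+\varphi(x_s)=\varphi(y_1)+\dots+\varphi(y_s)$ may involve far fewer than $2s$ distinct elements of $B$ (for instance any solution of $\varphi(x_1)+\varphi(x_2)=\varphi(y_1)+\varphi(y_2)$ padded on both sides by the same $s-2$ elements), and such a configuration survives the sampling with probability $p^{j}$ where $j$ is the size of its support, not $p^{2s}$. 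Since your $p$ is roughly $|B|^{-1+\eta_s/(2s-1)}$, these low-support terms $p^{j}M_j$ can dominate $p\,|B|$ by polynomial factors, and the single hypothesis $E_{s,\vec{\varphi}}(B)\ll |B|^{2s-\eta_s}$ does not control the counts $M_j$ in the way needed; note in particular that any $B_{s,\varphi}^{+}[1]$ set $X$ forces $\varphi(X)$ to be a Sidon set of every order $k\leq s$, so lower-order collision counts (i.e.\ lower energies of $\varphi(B)$, which you never bound) must also be destroyed. The issue you flag as the ``principal technical point'' (trivial tuples, counted by $O_s(|B|^s)$) is not the crux; those tuples never threaten the Sidon property. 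The same defect affects your multiplicative extractions from $M_s(C)$ and $M_{s,\vec{\varphi}}(B)$.

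This missing bookkeeping is exactly what the paper outsources to Lemma \ref{brns} (= \cite[Lemmata 5.1 and 5.2]{JM2022}): that lemma's hypothesis has the specific threshold $E_s(A)\ll |A|^{2s-2+1/s-c}$ and conclusion $|X|\gg |A|^{1/s+c/2s}$ precisely because the deletion/extraction must be balanced against collisions of all support sizes, and its proof carries out that multi-scale count. So either quote that lemma (as the paper does, after splitting $C$ and $B$ by sign, passing to $\varphi(B_1)$, and pulling back along chosen preimages), or supply genuine bounds on the number of non-trivial collisions with exactly $j$ distinct elements for every $3\leq j\leq 2s$ (e.g.\ via H\"older-type bounds deducing lower-order energy estimates from $E_{s,\vec{\varphi}}(B)$) before balancing the sampling probability. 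As it stands, the final exponent $\eta_s/(2s-1)$ is unjustified; the paper's argument yields $\eta_s/4s$, which is all that is needed since only $\delta_s\gg_d \log s/\log\log s$ is claimed.
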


The first such result was proven in \cite{JM2022}, where, amongst other estimates, it was shown that any finite set $A$ of integers contains either a $B_{s}^{+}[1]$ set or a $B_{s}^{\times}[1]$ set of size at least $|A|^{\delta_s/s}$, with $\delta_s \gg (\log \log s)^{1/2 - o(1)}$. This was subsequently improved by Shkredov \cite{Shk2022} who gave the bound $\delta_s \gg (\log s)^{1/2 - o(1)}$. Theorem $\ref{sid1}$ not only quantitatively improves this to $\delta_s \gg (\log s)^{1 - o(1)}$, but it also generalises this to estimates on $B_{s, {\varphi}}^{+}[1]$ and $B_{s, {\varphi}}^{\times}[1]$ sets, for suitable choices of ${\varphi} \in \mathbb{Z}[x]$. Moreover, this can be seen as an alternate generalisation of the sum-product phenomenon, since the first conclusion of Theorem $\ref{sid1}$ implies that
\[ |s \varphi(A)| + |A^{(s)}| \geq |s \varphi(X)| + |Y^{(s)}| \gg_{s} |X|^s + |Y|^s \gg_{s} |A|^{(\log s)^{1 - o(1)}}. \]
As before, this recovers the best known result towards Conjecture $\ref{esc}$ for large values of $s$ by simply setting $\varphi$ to be the identity map.
\par

In consideration of Conjecture $\ref{esc}$, one may naively expect Theorems $\ref{th3}$ and $\ref{sid1}$ to hold for every $\eta_s < s$ and $\delta_s <s$ respectively, since either of these would imply Conjecture $\ref{esc}$ in a straightforward manner, but in fact, both of these have been shown to be false in various works. In \cite{BW2017}, Balog--Wooley constructed arbitrarily large sets $A$ of integers, such that for every $B \subseteq A$ with $|B| \geq |A|/2$, we have that
\[ E_{s}(B) , M_{s}(B) \gg_{s} |A|^{s + (s-1)/3}, \]
thus implying that $\eta_s \leq (2s+1)/3$. Similarly, in the case when $\varphi$ is set to be the identity map, it was noted by Roche-Newton that these sets also gave the bound $\delta_2 \leq 3/2$, thus refuting a question of Klurman--Pohoata\cite{Po2021}. This was then improved by constructions of Green--Peluse (unpublished), Roche-Newton--Warren \cite{RNW2021} and Shkredov \cite{Sh2021} to $\delta_2 \leq 4/3$. The latter constructions were then generalised in \cite[Proposition 1.5]{JM2022} for every $s \in \mathbb{N}$, thus giving the estimate $\delta_s/s \leq 1/2 + 1/(2s+2)$ when $s$ is even and $\delta_s/s \leq 1/2 + 1/(2s)$ when $s$ is odd.
\par

%\[ \delta_s \leq 1/2 + 1/(2s+2) \ \text{when} \ s \ \text{is even,} \ \text{and} \ \delta_s \leq 1/2 + 1/(2s) \ \text{when} \ s \ \text{is odd}. \]
%\[ \delta_s \leq  \begin{cases}
%1/2 + 1/(2s+2)\quad&\text{ when }s\text{ is even},\\
%1/2 + 1/(2s) \quad&\text{ when }s\text{ is odd}.
%\end{cases}
%\]
We now return to the even more general case where $s,d$ are some natural numbers and $\varphi \in \mathbb{Z}[x]$ is some arbitrary polynomial with $\deg \varphi = d$. In this setting, we are able to utilise ideas from \cite{BW2017} and \cite{JM2022} to record the following upper bounds. 

\begin{Proposition} \label{bwex}
Let $d,s, N$ be natural numbers and let $\varphi \in (\mathbb{Z}[x])^{2s}$ satisfy $\vec{\varphi} = (\varphi, \dots, \varphi)$ for some $\varphi$ with $\deg \varphi = d$. If $s \geq 10d(d+1)$, then there exists a finite set $A \subseteq \mathbb{N}$ satisfying $N \ll |A|$ such that for any $B \subseteq A$ with $|B| \geq |A|/2$, we have that
\[ E_{s, \vec{\varphi}}(B) , M_{s}(B) \gg_{s, \vec{\varphi}} |A|^{s + s/3 - (d^2 + d + 2)/6} . \]
Similarly, when $s$ is even, there exists a finite set $A' \subseteq \mathbb{N}$ with $N \ll |A'|$ such that the largest $B_{s, {\varphi}}^+[1]$ subset $X$ and the largest $B_{s}^{\times}[1]$ subset $Y$ of $A'$ satisfy
\[ |X| \ll_{s,d} |A'|^{d/s} (\log |A'|)^{2d/s} \ \ \text{and} \ \ |Y| \ll_{s} |A'|^{1/2 + 1/(2s + 2)}.  \]
\end{Proposition}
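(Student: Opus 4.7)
The strategy is to adapt the multiplicative-grid constructions of \cite{BW2017} and \cite{JM2022} to the polynomial setting. Take
\[ A = P \cdot Q = \{p q^i : 1 \leq p \leq K,\ 0 \leq i < L\}, \]
where $P = \{1, 2, \dots, K\}$, $Q = \{1, q, \dots, q^{L-1}\}$, and $q$ is a prime chosen so large (say $q > (sK)^{10d}$) that the map $(p,i) \mapsto pq^i$ is injective and unique factorisation in $q$ decouples multiplicative identities in $A$ into separate identities on the $p$- and $i$-coordinates. Write $|A| = KL \asymp N$ and choose $K = N^a$, $L = N^{1-a}$ with $a$ slightly larger than $2/3$ (within a range that narrows as $d$ grows).

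Given $B \subseteq A$ with $|B| \geq |A|/2$, the multiplicative bound follows from Cauchy--Schwarz: $M_s(B) \geq |B|^{2s}/|A^{(s)}|$. Each element of $A^{(s)}$ has unique representation $\prod_k p_k \cdot q^{\sum_k i_k}$ with $\prod_k p_k \in P^{(s)}$ and $\sum_k i_k \in [0, s(L-1)]$, giving $|A^{(s)}| \leq |P^{(s)}| \cdot sL \ll_s K^s L (\log K)^{O(1)}$ by the classical Erd\H{o}s--Ford estimate for products of integers in $[1, K]$. Hence $M_s(B) \gg_s K^s L^{2s-1}/(\log K)^{O(1)}$, matching the target at the chosen $a$. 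For the mixed energy $E_{s, \vec{\varphi}}(B)$, decompose $B$ into rows $R_i = B \cap (P q^i)$ and observe that $\varphi(p q^i) = \sum_{j=0}^d c_j q^{ij} p^j$ is a degree-$d$ polynomial in $p$. Summing over $s$ elements of a row,
\[ \sum_{k=1}^s \varphi(p_k q^i) = \sum_{j=0}^d c_j q^{ij} S_j, \qquad S_j := \sum_{k=1}^s p_k^j, \]
and for $q$ sufficiently large distinct tuples $(S_1, \dots, S_d)$ produce distinct sums; since each $S_j$ takes at most $sK^j$ integer values, $|s\varphi(R_i)| \ll_s K^{d(d+1)/2}$. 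Cauchy--Schwarz on each row together with convexity over $\sum_i |R_i| = |B|$ yields $E_{s, \vec{\varphi}}(B) \gg_s L \cdot K^{2s - d(d+1)/2}$, and calibrating $a$ to balance both bounds produces the exponent $s + s/3 - (d^2+d+2)/6$.

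For the Sidon-type assertion, an analogous grid $A'$ is used. A $B_{s, \varphi}^+[1]$ subset $X \subseteq A'$ contributes $\binom{|X|+s-1}{s} \gg_s |X|^s$ distinct values of $\varphi(x_1) + \dots + \varphi(x_s)$, all lying in $s\varphi(A')$; the polynomial-image bound combined with the grid structure of $A'$ gives $|s\varphi(A')| \ll_{s,d} |A'|^d (\log |A'|)^{2d}$, whence $|X| \ll_{s,d} |A'|^{d/s}(\log|A'|)^{2d/s}$. For $|Y|$, any $B_s^\times[1]$ subset of $A'$ with $s$ even has its $s$-fold products constrained by the multiplicative grid structure of $A'$, and the argument in \cite[Proposition 1.5]{JM2022} (which treats the case $\varphi = \mathrm{id}$) adapts directly to deliver $|Y| \ll_s |A'|^{1/2 + 1/(2s+2)}$.

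The main obstacle is the precise choice of $a$ so that both energy bounds meet the target simultaneously. As $d$ grows, the additive bound forces $a$ slightly above $2/3$ to offset the $K^{d(d+1)/2}$ loss from the polynomial degree, while the multiplicative bound tolerates this adjustment only within a window of width $\asymp d(d+1)/s$. The hypothesis $s \geq 10d(d+1)$ is precisely what guarantees that this window is non-empty and that the resulting exponent $s + s/3 - (d^2+d+2)/6$ is both positive and linear in $s$.
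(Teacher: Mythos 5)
Your first half is correct and is essentially the paper's own argument: the paper uses the grid $\{(2i+1)2^j : 1\le i\le m,\ 1\le j\le n\}$ rather than $\{pq^i\}$, but the steps coincide — Cauchy--Schwarz against $|A^{(s)}|\ll_s m^s n$ for $M_s(B)$, a row-wise image bound $|s\varphi(\mathrm{row})|\ll_{s,\varphi} m^{d(d+1)/2}$ via the power sums $S_1,\dots,S_d$ followed by Cauchy--Schwarz and averaging over rows for $E_{s,\vec{\varphi}}(B)$, and then balancing the two exponents. Two cosmetic remarks: the trivial bound $|P^{(s)}|\le K^s$ suffices, so the Erd\H{o}s--Ford input (and its log losses) is unnecessary; and you never need distinct tuples $(S_1,\dots,S_d)$ to give distinct sums — only the upper bound on the number of tuples matters. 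The final calibration you assert rather than carry out, but it is the same routine computation the paper does explicitly, and the hypothesis $s\ge 10d(d+1)$ enters exactly there.

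The second half has a genuine gap: ``an analogous grid $A'$'' cannot be the grid of the first part, and you do not specify what it is. If $A'=\{pq^i\}$ with $q$ a huge prime, both claimed bounds are false. For the additive one, the set $X=\{p_0q^i : 0\le i< L\}$ (one fixed $p_0$ per row) is a $B_{s,\varphi}^{+}[1]$ set, since for $q$ large the values $\varphi(p_0q^i)$ form a super-increasing sequence, so $|X|\gg |A'|^{1/3}$, which exceeds $|A'|^{d/s}(\log|A'|)^{2d/s}$ once $s\ge 10d(d+1)$; for the multiplicative one, the primes lying in a single row form a $B_s^{\times}[1]$ set of size $\gg K/\log K\approx |A'|^{2/3-o(1)}\gg |A'|^{1/2+1/(2s+2)}$. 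The second construction must be different in kind: the paper (following \cite{JM2022}) takes $A'=P\cdot Q$ with $P$ the first $\lceil N^{s/(2s+2)}\rceil$ primes and $Q$ the next $\lceil N^{(s+2)/(2s+2)}\rceil$ primes. The additive bound then comes not from any grid/decoupling structure but from the fact that, by the prime number theorem, $A'\subseteq\{1,\dots,M\}$ with $M\ll N(\log N)^2$, so $s\varphi(A')$ has size $O_{s,\varphi}(M^d)$ and hence $|X|^s\ll_{s,d}N^d(\log N)^{2d}$; and the multiplicative bound is the cycle-free bipartite graph estimate of \cite{NV05} applied as in \cite[Section 2]{JM2022}, whose exponent $1/2+1/(2s+2)$ depends on the precise calibration of $|P|$ and $|Q|$ — it is not a generic consequence of $A'$ being a product set, so ``adapts directly'' needs this specific construction to be in place.
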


This, along with Theorems $\ref{th3}$ and $\ref{sid1}$, implies that whenever $s \geq 10d(d+1)$, then
\[ (\log s)^{1 - o(1)} \ll_{d} \eta_s \leq 2s/3 + (d^2 + d + 2)/6 \ \ \text{and} \  \ (\log s)^{1 - o(1)} \ll_{d} \delta_s \leq s/2 + 1/2 \]
holds true. Thus, there is a large gap between the known upper and lower bounds, and it would be interesting to understand the right order of magnitude for these quantities, see also \cite[Question $1.6$]{JM2022}. We remark that Proposition $\ref{rev4}$ provides further relations between Theorems $\ref{th3}$ and $\ref{sid1}$.
\par

%Our methods can also be utilised to prove various other results that embody the heuristic that multiplicatively structured sets have very few solutions to polynomial equations, and we present one such result below.
Returning to our original theme, we note that a combination of Theorem $\ref{th3}$ and the Pl{\"u}nnecke--Ruzsa inequality (see Lemma \ref{pr21}) implies that whenever a set $A \subseteq \mathbb{Z}$ satisfies $|A^{(2)}| \leq K|A|$ with $K = |A|^{\eta_s/10s}$, then the sumset $\varphi_1(A) + \dots + \varphi_s(A)$ exhibits unbounded expansion. But in fact, in this case, we are able to show that such sumsets must be close to being extremally large. This can be deduced from the following more general result. As is usual, we write $e(\theta) = e^{2 \pi i \theta}$ for every $\theta \in \mathbb{R}$. 

\begin{theorem} \label{mve}
Let $K \geq 1$ be a real number and let $d,s$ be natural numbers. Moreover, let $A \subseteq \mathbb{Z}$ be a finite set such that $|A^{(2)}| = K|A|$ and let $\varphi \in \mathbb{Z}[x]$ satisfy $\deg \varphi = d$ and let $\frak{a}: \mathbb{Z} \to [0, \infty)$ be a function. Then, writing $C = 8 + 12 \log (d^2 + 2) + 6 \log (2s)$, we have
\begin{equation} \label{ze3}
 \int_{[0,1)} | \sum_{a \in A} \frak{a}(a) e( \alpha \varphi(a) ) |^{2s} d \alpha \ll_{s} K^{Cs} ( \log 2|A|)^{2s} \Big(\sum_{a \in A} \frak{a}(a)^2 \Big)^s . 
\end{equation}
\end{theorem}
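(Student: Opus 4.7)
The plan is to interpret the left-hand side of \eqref{ze3} as a weighted additive $s$-energy of $\varphi(A)$, and to bound it by exploiting the small multiplicative doubling of $A$ via a sum-product type estimate. Expanding the $2s$-th power and integrating gives
\[ \int_0^1 \Big| \sum_{a \in A} \frak{a}(a) e(\alpha \varphi(a)) \Big|^{2s} d\alpha = \sum_{(a_1,\ldots,a_{2s})} \frak{a}(a_1) \cdots \frak{a}(a_{2s}), \]
where the outer sum is over tuples $(a_1,\ldots,a_{2s}) \in A^{2s}$ with $\sum_{i=1}^s \varphi(a_i) = \sum_{i=s+1}^{2s} \varphi(a_i)$.

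I would first reduce the weighted estimate to an unweighted one. Partitioning $A$ into $O(\log |A|)$ dyadic level sets $A_j = \{a \in A : 2^{j-1} < \frak{a}(a) \le 2^j\}$ of the weight and applying the triangle inequality in $L^{2s}$, it would suffice to prove that for every $X \subseteq A$,
\[ \int_0^1 \Big| \sum_{x \in X} e(\alpha \varphi(x)) \Big|^{2s} d\alpha \ll_{s,d} K^{Cs} |X|^s, \]
with the $(\log |A|)^{2s}$ loss arising from the dyadic sum together with the $2^{2j}|A_j| \le \|\frak{a}\|_2^2$ inequality on each level set.

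Next, from $|A^{(2)}| = K|A|$ and the multiplicative Pl\"unnecke--Ruzsa inequality one obtains $|A^{(k)}| \le K^k |A|$ and $|A^{(k)}/A^{(l)}| \le K^{k+l}|A|$ for all $k,l \ge 1$. Writing $\varphi(x) = c_d x^d + \ldots + c_0$ and $A^k := \{a^k : a \in A\}$, each $A^k$ inherits small multiplicative doubling from $A$ (since $|A^k \cdot A^k| \le |A^{(2)}| = K|A| \le Kd|A^k|$), and $\varphi(A)$ lies in the sumset $c_d A^d + \ldots + c_0$. I would then apply an iterated Weyl-type differencing built on the factorisation $\varphi(a) - \varphi(b) = (a-b) Q(a,b)$ with $\deg Q = d-1$, which effectively reduces the general polynomial case to the leading-monomial case $\psi(x) = c_d x^d$; this reduction is what produces the $K^{12 s \log(d^2+2)}$ contribution to $K^{Cs}$.

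The heart of the argument is then a sum-product type additive-energy bound: for any $Y \subseteq \mathbb{Z}$ with $|Y \cdot Y| \le L|Y|$, one should have $E_s(Y) \ll L^{O(s)} (\log |Y|)^{2s} |Y|^s$. I would prove this by iterating Pl\"unnecke--Ruzsa together with an Elekes-style sum-product bound such as $|X+X| \cdot |X \cdot X| \gtrsim |X|^{5/2}$ across $O(\log L)$ dyadic scales of additive energy, with each stage contributing a factor $(2s)^{O(s)}$; this produces the $K^{6 s \log(2s)}$ term of $C$. Applied with $L = O(Kd)$ to $\psi(A) = c_d A^d$ this closes the proof. The main obstacle is precisely this sum-product step: while Pl\"unnecke--Ruzsa and Elekes' incidence bound are both classical, obtaining a genuinely $L^{O(s)}$-type dependence (rather than $L^{O(s^2)}$ or worse), together with the sharp polylog loss in $|A|$, requires a very careful iteration, and it is here that essentially all of the constant $C$ is generated.
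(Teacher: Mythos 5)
The decisive step of your outline is not a proof but a restatement of the theorem. Your ``heart of the argument'' is the claim that $|Y\cdot Y|\le L|Y|$ forces $E_s(Y)\ll L^{O(s)}(\log|Y|)^{2s}|Y|^s$, to be proved by iterating Pl\"unnecke--Ruzsa with the Elekes-type bound $|X+X|\,|X\cdot X|\gg|X|^{5/2}$ over $O(\log L)$ dyadic scales. But that claim \emph{is} Theorem \ref{mve} in the special case $\varphi(x)=x$, $\frak{a}=\mathds{1}_Y$, i.e.\ precisely the weak Erd\H{o}s--Szemer\'edi-type estimate whose proof is the whole point; no incidence-geometric iteration is known (or plausible) that delivers it. Szemer\'edi--Trotter/Elekes arguments give at best $E_2(Y)\ll L|Y|^2\log|Y|$, and feeding this into the trivial interpolation $E_s(Y)\le|Y|^{2s-4}E_2(Y)$ leaves you at $L|Y|^{2s-2}\log|Y|$, which is exponentially far in the exponent from the strongly diagonal bound $|Y|^s$; repeated applications of such bounds save only bounded powers of $|Y|$ and cannot produce near-diagonal behaviour for large $s$. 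All known proofs of strongly diagonal estimates of this kind (Bourgain--Chang, P\'alv\"olgyi--Zhelezov, and the present paper) exploit prime factorisation and Freiman-type structure of multiplicatively structured sets rather than incidence geometry. A second, lesser gap is the ``iterated Weyl-type differencing'' reduction to the leading monomial: differencing replaces $a$ by differences $a-b$, which no longer lie in a set with small product set, so the multiplicative hypothesis is destroyed, and no mechanism is offered; the constant $K^{12s\log(d^2+2)}$ you attribute to this step appears to be reverse-engineered from the statement rather than derived.

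For comparison, the paper's proof runs as follows: Pl\"unnecke--Ruzsa (Lemma \ref{pr21}) gives $|A^{(3)}|\le K^3|A|$; the inverse result Lemma \ref{cmb} (binary sets and skew-dimension, after P\'alv\"olgyi--Zhelezov) then produces $A'\subseteq A$ with $|A'|\ge|A|/K^3$ and query complexity $q(A')\le 3\log K+O(1)$; the Hanson--Petridis covering lemma (Lemma \ref{hph}) covers $A$ by $\ll K^4\log|A|$ dilates of $A'$ --- this covering, not the weights, is the source of the $(\log|A|)^{2s}$ factor; sub-multiplicativity of energies (Lemma \ref{reprove}) reduces to a single piece $A_i\subseteq s_i\cdot A'$ with $q(A_i)\le q(A')$; and finally the polynomial Chang lemma (Lemmata \ref{chang} and \ref{ayay}) gives $E_{s,\frak{a},\varphi}(A_i)^{1/s}\le(d^2+2)^{4q}(2s)^{2q}\sum_{a}\frak{a}(a)^2$, handling a general $\varphi$ of degree at most $d$ directly through comparison of $p$-adic valuations of its monomials, so no reduction to the leading term is needed. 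To salvage your outline you would have to replace the Elekes iteration by this query-complexity/Chang mechanism (or an equivalent structural input), at which point the Weyl differencing step becomes unnecessary; your dyadic weight decomposition is harmless but also not needed, since Lemma \ref{ayay} is already weighted.
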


When $\varphi(x) = x$ for every $x \in \mathbb{Z}$,  such types of results have been previously referred to as the weak Erd\H{o}s-Szemer\'{e}di Conjecture,  which suggests that whenever $|A^{(2)}| \leq K|A|$, then we have $|2A| \gg_{\varepsilon} |A|^{2 - \varepsilon}/ K^{O(1)}$ for every $\varepsilon>0$.  Proving such an estimate formed a key step in the work of Bourgain--Chang \cite{BC2004},  and in fact,  Theorem \ref{mve} improves upon \cite[Theorem $1.3$]{PZ2020} by replacing small powers of $|A|$ with a factor of $(\log |A|)^{O(1)}$. 
\par

Denoting the mean value on the left hand side of $\eqref{ze3}$ to be $E_{s, \frak{a}, \varphi}(A)$, we may apply orthogonality to see that
%Applying orthogonality, we see that the mean value on the left hand side above equals 
\[ E_{s, \frak{a}, \varphi}(A)  = \sum_{a_1, \dots, a_{2s} \in A} \frak{a}(a_1) \dots \frak{a}(a_{2s})\mathds{1}_{\varphi(a_1) + \dots \varphi(a_s) = \varphi(a_{s+1}) + \dots + \varphi(a_{2s})} \geq \Big(\sum_{a \in A} \frak{a}(a)^2\Big)^s , \]
where the inequality follows from counting the diagonal solutions $a_{i} =  a_{i+s}$ for all $1 \leq i \leq s$. On the other hand, given $\varepsilon >0$, in the case when $|A^{(2)}| \ll_{s,d} |A|^{ c\varepsilon}|A|$ for some small constant $c = c(d,s)>0$, we may apply Theorem $\ref{mve}$ to deduce that
%\[ \int_{[0,1)} | \sum_{a \in A} \frak{a}(a) e( \alpha \varphi(a) ) |^{2s} d \alpha \ll_{s,d} |A|^{\varepsilon} (\sum_{a \in A} \frak{a}(a)^2)^s , \]
\[ E_{s, \frak{a}, \varphi}(A)  \ll_{s,d, \varepsilon} |A|^{\varepsilon} \Big(\sum_{a \in A} \frak{a}(a)^2\Big)^s , \]
which matches the aforementioned lower bound up to a factor of $|A|^{\varepsilon}$. Thus, Theorem $\ref{mve}$ indicates that additive polynomial equations exhibit strongly diagonal type behaviour with respect to multiplicatively structured sets of integers. In fact, our methods can prove a corresponding result for multiplicative polynomial equations, which, in turn, can be employed to furnish a non-linear analogue of a subspace-type theorem, see Theorems $\ref{fin6}$ and $\ref{mltsz}$.
\par

We remark that apart from the sum-product conjecture,  our results are motivated by another well-known phenomenon which studies growth of sets of the form $F(A, \dots, A) = \{ F(a_1, \dots, a_s) \ | \ a_1, \dots, a_s \in A\}$, where $F$ is an arbitrary polynomial in $s$ variables.  Estimates for $|F(A, \dots, A)|$,  as well as its connection to the size of the product set $A^{(s)}$, have been widely studied in previous works,  see,  for instance, \cite{BT2012,  RSS2016,  RS2020, Ta2015}. In most of such papers, the motivation is to find some $0<c\leq1$ such that either $|F(A, \dots, A)|\gg |A|^{1+ c}$ holds for all large,  finite subsets $A$ of some ambient set,  or $F$ is of a special form,  such as,  say $F(x_1,  \dots,  x_s) = \varphi_1(x_1) + \dots + \varphi_s(x_s)$ or $F(x_1, \dots, x_s) = \varphi_1(x_1) \dots \varphi_s(x_s),$ where $\varphi_1, \dots, \varphi_s$ are univariate polynomials.  On the other hand,  our results show that over $\mathbb{Q}$,  if $F$ is of the latter type with suitably chosen $\varphi_1, \dots, \varphi_s$,  then either $|F(A,  \dots, A)|$ or $|A^{(s)}|$  exhibits unbounded expansion.  Thus our results compliment the regimes analysed by these previous works.  Moreover,  the methods involved in the latter seem to be quite different from the techniques used in our paper.
\par

We now proceed to describe the outline of our paper, along with some of the proof ideas present therein. As previously mentioned, \S2 is dedicated to presenting some further applications of our method. In \S3, we record some properties of the mixed energies $E_{s, \vec{\varphi}}(A)$ and $M_{s, \vec{\varphi}}(A)$ that we will use throughout our paper. The first main step towards proving Theorem $\ref{th3}$ is initiated in \S4, which we utilise to prove a generalisation of a result of Chang \cite[Proposition 8]{Ch2003}. This is the content of Lemma $\ref{chang}$, which can be interpreted as a decoupling type inequality. For instance, in the additive case, suppose that we have natural numbers $d,r,s$, a prime number $p$, a polynomial $\varphi \in \mathbb{Z}[x]$ with $\deg \varphi = d$, and finite sets $A_0, A_1, \dots, A_r$ of natural numbers such that for every $1 \leq i \leq r$, we have $A_i = \{ a \in A_0 \ | \ \nu_p(a) = n_i \}$ for some unique $n_i \in \mathbb{N} \cup \{0\}$, where $\nu_p(n)$ denotes the largest exponent $m \in \mathbb{N} \cup \{0\}$ such that $p^m$ divides $n$. Writing
\[ f_i(\alpha) = \sum_{a \in A_i}\frak{a}(a) e( \alpha \varphi(a)) \ \text{for all} \ \alpha \in [0,1) \ \ \text{and} \ \ \n{ f_i}_{2s} = \Big(\int_{[0,1)} |f(\alpha)|^{2s} d \alpha \Big)^{1/2s},  \]
where $\frak{a}: \mathbb{N} \to [0, \infty)$ is some function, we are interested in proving estimates of the form
\[   \n{f_1 + \dots + f_r}_{2s} \ll_{d,s} \Big(\sum_{i=1}^{r} \n{f_i}_{2s}^2 \Big)^{1/2},\]
that is, we want to exhibit square-root cancellation in moments of these exponential sums. This may then be iterated to obtain estimates on $E_{s, \frak{a}, \varphi}(A)$ in terms of the $l^2$-norm of $\frak{a}$ and the so-called \emph{query-complexity} $q(A)$ of $A$, see Lemma $\ref{ayay}$ for more details.
\par

We now proceed to \S5, where we analyse the multiplicative analogue of this phenomenon. This ends up being harder to deal with, and in particular, we have to first study some auxilliary mean values of the form 
\begin{equation} \label{defdec}
J_{s, \frak{a}, \varphi}(A) =   \sum_{a_1, \dots, a_{2s} \in A} \frak{a}(a_1) \dots \frak{a}(a_{2s}) \mathds{1}_{a_1 \dots a_s = a_{s+1} \dots a_{2s}} \mathds{1}_{\varphi(a_1) \dots \varphi(a_{s}) = \varphi(a_{s+1})\dots \varphi(a_{2s})}, \end{equation}
and prove decoupling type inequalities for such quantities. Iterating these estimates, as in \S4, leads to suitable bounds for $J_{s, \frak{a}, \varphi}(A)$. Next, we prove a multiplicative variant of an averaging argument from analytic number theory, which allows us to discern bounds on $M_{s, \frak{a}, \varphi}(A)$ from estimates for $J_{s, \frak{a}, \varphi}(A)$ by incurring a further factor of $|A^{(s)}/A^{(s)}|$, see Lemmata $\ref{tst}$ and $\ref{trut}$. 
\par

At this stage of the proof, we collect various inverse theorems from arithmetic combinatorics in \S6, the first of these arising from our work on a variant of the $s$-fold Balog-Szemer\'{e}di-Gowers theorem in \cite{Mu2021c}. This, along with a dyadic pigeonholing trick, allows us to deduce that whenever $A$ satisfies $M_{s}(A) \geq |A|^{2s- k}$, for suitable values of $s,k$, then $A$ has a large intersection with a set $U'$ satisfying $|U'| \ll |A|^k$, such that the many-fold product sets of $U'$ expand slowly, see Theorem $\ref{th46}$ for more details. The other inverse theorem that we are interested in emanates from the circle of ideas recorded in \cite{PZ2020}, and it implies that given two finite sets $A, X \subseteq \mathbb{N}$ satisfying the inequality $|A \cdot X \cdot X| \leq K |X|$, there must exist a large subset $B$ of $A$ with $q(B) \leq \log K$. Roughly speaking, this can be seen as a Freiman type structure theorem for sets with small asymmetric product sets, since query-complexity itself may be interpreted as a skewed version of some notion of multiplicative dimension. The two aforementioned inverse theorems combine naturally to imply that any set $A \subseteq \mathbb{Z}$ with a large multiplicative energy has a large subset $B$ with a small query complexity. In \S7 we apply this idea iteratively, along with the results recorded in \S\S3-6, to yield the proof of Theorem $\ref{th3}$. We utilise \S8 to prove Theorems $\ref{mve}$ and $\ref{fin6}$, and finally, in \S9, we provide the proofs of Theorem $\ref{sid1}$ and Proposition $\ref{bwex}$.
\par

%%\par
%In particular, given a prime $p$ and a non-constant polynomial $\varphi \in \mathbb{Z}[x]$ and elements $a_1, \dots, a_{2s} \in \mathbb{Z}$, we are interested in showing that whenever
%%\[ \sum_{i=1}^{s} ( \varphi(a_i) - \varphi(a_{s+i}) ) = 0  \]
%%then there $1 \leq i, j \leq 2s$ such that $i \neq j$ as well as that the largest power of $p$ that divides $a_i$ is also precisely the largest power of $p$ that divides $a_j$. While this does not hold true exactly as written above since such a statement is heavily affected by the coefficients of $\varphi$, we are able to perform various combinatorial manoeuvres to bypass this difficulty and prove an approximate version which suffices for our setting. On the other hand, proving a multiplicative analogue seems very hard since one may choose 
%%\par
%%While the additive case may be dealt with using various combinatorial decompositions and elementary ideas, the multiplicative case requires one to first study an auxilliary mean value that arises from studying systems of multiplicative equations. We are able to prove a Chang type lemma for such systems of equations, and using an averaging argument from analytic number theory, we can then obtain a estimate for multiplicative energies that depends on the so-called query complexity of the set $A$ as well as 
%%\par

\textbf{Notation}. In this paper, we use Vinogradov notation, that is, we write $X \gg_{z} Y$, or equivalently $Y \ll_{z} X$, to mean $X \geq C_{z} |Y|$ where $C$ is some positive constant depending on the parameter $z$.  We use $e(\theta)$ to denote $e^{2\pi i \theta}$ for every $\theta \in \mathbb{R}$. Moreover, for every natural number $k \geq 2$ and for every non-empty, finite set $Z$, we use $|Z|$ to denote the cardinality of $Z$, we write $Z^k = \{ (z_1, \dots, z_k)  \ |  \ z_1, \dots, z_k \in Z\}$ and we use boldface to denote vectors $\vec{z} = (z_1, z_2, \dots, z_k) \in Z^k$. All our logarithms will be with respect to base $2$.

\textbf{Acknowledgements}. The author is supported by Ben Green's Simons Investigator Grant, ID 376201. The author is grateful to Ben Green and Oliver Roche-Newton for helpful discussions.  The author would like to thank the anonymous referee for a careful reading of the manuscript and for various helpful comments.

%---------------------------------------------------------------------------------------------------------------------------
%---------------------------------------------------------------------------------------------------------------------------
%---------------------------------------------------------------------------------------------------------------------------
%---------------------------------------------------------------------------------------------------------------------------
%---------------------------------------------------------------------------------------------------------------------------
%---------------------------------------------------------------------------------------------------------------------------

\section{Further applications}

%Before proceeding to presenting some applications of our techniques, we note that our method works for all $\vec{\varphi} \in (\mathbb{Z}[x])^{2s}$ such that none of the polynomials $\varphi_1, \dots, \varphi_{2s}$ are monomials. One can see that this is somewhat a necessary condition since multiplicative structure is preserved under monomials, that is, for any $x_1, \dots, x_{2s} \in \mathbb{N}$ and for any $c, d \in \mathbb{N}$, we have
%\[ x_1 \dots x_s = x_{s+1} \dots x_{2s} \ \ \text{if and only if} \ \ (c x_1)^{d} \dots (c x_s)^d = (c x_{s+1} )^d \dots ( c x_{2s} )^{d} . \]
%Thus, letting $c,d, N$ be natural numbers and letting $A = \{2, 4, \dots, 2^N\}$, we see that if $\varphi_1(x) = \dots = \varphi_{2s}(x) = cx^d$ for every $x \in \mathbb{Z}$, then for any $B \subseteq A$ with $|B| \geq |A|/2$, we have
%\[ M_{s, \vec{\varphi}}(B) = M_{s}(B) \gg_{s} |B|^{2s - 1} .\]
%\par

As mentioned in the previous section, we are able to prove various multiplicative analogues of Theorem $\ref{mve}$. In order to state these, we first define a generalisation of $M_{s, \vec{\varphi}}(A)$ and $J_{s, \frak{a}, \varphi}(A)$, and so, for every $\vec{\varphi} \in \mathbb{Q}[x]^{2s}$, every $\frak{a}: \mathbb{R} \to [0, \infty)$ and every finite set $A \subseteq \mathbb{R}$, we define
\[ M_{s, \frak{a}, \vec{\varphi}}(A) = \sum_{a_1, \dots, a_{2s} \in A} \frak{a}(a_1) \dots \frak{a}(a_{2s}) \mathds{1}_{\varphi_1(a_1) \dots \varphi_{s}(a_s) = \varphi_{s+1}(a_{s+1}) \dots \varphi_{2s}(a_{2s})} \]
\text{and}
\[J_{s, \frak{a}, \vec{\varphi}}(A) =   \sum_{a_1, \dots, a_{2s} \in A} \frak{a}(a_1) \dots \frak{a}(a_{2s}) \mathds{1}_{a_1 \dots a_s = a_{s+1} \dots a_{2s}} \mathds{1}_{\varphi_1(a_1) \dots \varphi_s(a_{s}) = \varphi_{s+1}(a_{s+1})\dots \varphi_{2s}(a_{2s})}. \]
Moreover, for any $\varphi \in \mathbb{R}[x]$, we write $\mathcal{Z}_{\varphi} = \{ x \in \mathbb{R} \ | \ \varphi(x) = 0\}$. Similarly, for any $\vec{\varphi} = (\varphi_1, \dots, \varphi_{2s}) \in (\mathbb{R}[x])^{2s}$, we denote $\mathcal{Z}_{\vec{\varphi}} = \mathcal{Z}_{\varphi_1} \cup \dots \cup \mathcal{Z}_{\varphi_{2s}}$. Using methods related to the proofs of Theorems $\ref{th3}$ and $\ref{mve}$, we are able to derive the following upper bounds for $J_{s, \frak{a}, \vec{\varphi}}(A)$ and $M_{s, \frak{a}, \vec{\varphi}}(A)$, whenever $A \cdot A$ is small.

\begin{theorem} \label{fin6}
Let $K \geq 1$ be a real number and let $d,s$ be natural numbers. Moreover,  let $\vec{\varphi} \in \mathbb{Q}[x]^{2s}$ satisfy $1 \leq \deg \varphi_i \leq d$ for each $1 \leq i \leq 2s$, let $A$ be a finite subset of $\mathbb{Q} \setminus( \mathcal{Z}_{\vec{\varphi}}\cup \{0\})$ such that $|A \cdot A| = K|A|$, and let $\frak{a}: \mathbb{N} \to [0, \infty)$ be a function. Then 
\[ \max \{ J_{s, \frak{a},  \vec{\varphi}}(A)) ,  M_{s, \frak{a}, \vec{\varphi}}(A)|A|^{-1} \}  \ll_{s,d} K^{Cs} (\log |A|)^{2s} (\sum_{a \in A} \frak{a}(a)^2)^{s}   , \]
where $C = 10 + 48\log (d+3) + 6 \log (2s)$.
\end{theorem}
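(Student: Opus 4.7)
The plan is to first establish the claimed bound on $J_{s,\frak{a},\vec{\varphi}}(A)$ using a dyadic reduction, an inverse theorem, and an iterated multiplicative decoupling; the bound on $M_{s,\frak{a},\vec{\varphi}}(A)|A|^{-1}$ will then follow by Cauchy--Schwarz together with Pl{\"u}nnecke--Ruzsa.

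I begin with a dyadic pigeonhole on the values of $\frak{a}$: some level set $B \subseteq A$ on which $\frak{a}(a) \asymp \lambda$ satisfies $|B|\lambda^2 \gg (\sum_a \frak{a}(a)^2)/\log|A|$, so $\frak{a}$ may be replaced by $\lambda \mathds{1}_B$ at cost $\log|A|$ per coordinate, producing a total cost of $(\log|A|)^{2s}$ over the $2s$ coordinates of a tuple. Next, the hypothesis $|A \cdot A| = K|A|$ combined with Pl{\"u}nnecke--Ruzsa yields $|B \cdot B \cdot B| \leq |A^{(3)}| \leq K^{O(1)} |B| (\log|A|)^{O(1)}$, and the inverse theorem recorded in \S6 (a query-complexity reduction in the spirit of P\'{a}lv\"{o}lgyi--Zhelezov) then produces a subset $B' \subseteq B$ of comparable size whose query complexity satisfies $q(B') \ll \log K$.

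With this multiplicative structure in hand, I apply the multiplicative decoupling inequality of \S5 for $J_{s,\frak{a},\vec{\varphi}}$ iteratively: each iteration picks a prime $p$ witnessing one coordinate of the query complexity, partitions the current set by $p$-adic valuation, and decouples $J$ into a sum of $J$-quantities on the parts at a cost of $O((d+3)^{O(s)} (2s)^{O(s)})$ per step (the factor $d$ tracking zeros of the $\varphi_i$ modulo $p$, and the $2s$ tracking the Cauchy--Schwarz losses built into the decoupling step). After $q(B') \ll \log K$ iterations the remaining pieces are multiplicatively trivial, so that $J$ on each is bounded by the diagonal contribution $\lambda^{2s}|B'|^{s} \leq (\sum_a \frak{a}(a)^2)^s$. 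Combining the factor $(\log|A|)^{2s}$ from the pigeonhole with the total iteration loss $K^{O(s \log(d+3) + s \log(2s))}$ and carefully bookkeeping the constants yields the claimed bound on $J_{s,\frak{a},\vec{\varphi}}(A)$ with $C = 10 + 48 \log(d+3) + 6 \log(2s)$.

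To pass from $J$ to $M|A|^{-1}$, I write $M_{s,\frak{a},\vec{\varphi}}(A) = \sum_u F^{(1)}(u) F^{(2)}(u)$, where $F^{(i)}(u)$ is the $\frak{a}$-weighted representation function for the $i$-th half of the tuple having $\varphi$-product equal to $u$; decomposing by the ratio $r \in A^{(s)}/A^{(s)}$ of the two halves' $a$-products and applying Cauchy--Schwarz then gives $M_{s,\frak{a},\vec{\varphi}}(A) \leq |A^{(s)}/A^{(s)}| \cdot (J_{s,\frak{a},\vec{\varphi}^{(1)}}(A) J_{s,\frak{a},\vec{\varphi}^{(2)}}(A))^{1/2}$, where $\vec{\varphi}^{(i)}$ is the tuple obtained from $\vec{\varphi}$ by repeating its $i$-th half on both sides. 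The inner $J$-quantities are controlled by the previous argument, and $|A^{(s)}/A^{(s)}| \leq K^{O(s)}|A|$ by Pl{\"u}nnecke--Ruzsa, producing the required bound on $M|A|^{-1}$. The main obstacle I foresee is ensuring that the multiplicative decoupling of \S5 is uniform enough in the tuple $\vec{\varphi}$ to be applied to $\vec{\varphi}^{(i)}$, and that the interlock between the inverse theorem and the per-iteration loss produces precisely the stated $C$ rather than a noticeably worse exponent.
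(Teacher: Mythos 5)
There is a genuine gap, and it sits at the heart of the argument. Your first move --- dyadically pigeonholing the weight and passing to a level set $B$ with $\frak{a} \asymp \lambda$ --- is both unnecessary and damaging. It is unnecessary because the Chang-type decoupling machinery (Lemma \ref{ob2} and its iterated form, like Lemma \ref{ayay} in the additive case) is already stated for arbitrary weights and bounds everything directly by $\sum_{a}\frak{a}(a)^2$, so no reduction to indicator weights is needed. It is damaging because the level set $B$ may be far smaller than $A$ (the $\ell^2$ mass of $\frak{a}$ can concentrate on very few elements), so your claim $|B\cdot B\cdot B| \le K^{O(1)}|B|(\log|A|)^{O(1)}$ is false in general: Pl\"unnecke--Ruzsa gives $|A^{(3)}|\le K^3|A|$, and $|A|/|B|$ is unbounded. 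One can still run the inverse step (Lemma \ref{cmb}) with ambient set $X=A$, but the deeper problem is what comes next: after extracting $B'\subseteq B$ with $q(B')\ll \log K$, you bound $J$ only on $B'$ and treat that as a bound for the whole set. Energies are not controlled by passing to a large subset --- $J_{s,\frak{a},\vec{\varphi}}(B)$ can be dominated by tuples meeting $B\setminus B'$ --- so this step simply does not close.

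What is missing is the covering argument that the paper uses (Lemma \ref{hph}, following the proof of Theorem \ref{mve}): from $|A\cdot A'|\le K|A|\le K^{4}|A'|$ one covers $A$ by $\ll K^{4}\log|A|$ dilates of the structured set $A'\subseteq A$ with $q(A')\ll\log K$, each piece inheriting the small query complexity, and then the multilinear splitting (Lemma \ref{wm}/Proposition \ref{gvup}) over these $\ll K^4\log|A|$ pieces is the legitimate source of the $(\log|A|)^{2s}$ factor. In your outline that budget has already been spent on the weight pigeonhole, and no covering is performed at all; moreover the covering must be done for the full set $A$ (where $|A\cdot A|=K|A|$ gives the right comparison with $|A'|$), not for a possibly tiny level set, where the covering multiplicity could be of size $\sim |A|/|B|$. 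Your final step, passing from $J$ to $M|A|^{-1}$ by fibering over the ratio in $A^{(s)}/A^{(s)}$ together with Cauchy--Schwarz and $|A^{(s)}/A^{(s)}|\le K^{2s}|A|$, is essentially the paper's Lemma \ref{trut} combined with Lemma \ref{pr21} and is fine in spirit; but as it stands the core bound on $J_{s,\frak{a},\vec{\varphi}}(A)$ is not established, and the claimed value of $C$ cannot be recovered from the argument as written.
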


Here, the condition $A \subseteq \mathbb{Q}\setminus( \mathcal{Z}_{\vec{\varphi}}\cup \{0\})$ seems to be necessary. In order to see this, we may choose $\varphi \in \mathbb{Q}[x]$ to be some linear polynomial, $\vec{\varphi} = (\varphi, \dots, \varphi)$ and $A = \{2,4,\dots, 2^N\} \cup \{x\}$ for some $x \in \mathcal{Z}_{\varphi}$. In this case, since $\varphi(a_1) \dots \varphi(a_{s-1}) \varphi(x) = \varphi(a_{s+1}) \dots \varphi(a_{2s-1}) \varphi(x) = 0$ for any $a_1, \dots, a_{s-1}, a_{s+1}, \dots, a_{2s-1} \in A$, we see that $J_{s, \vec{\varphi}}(A) \geq M_{s-1}(A) \gg_s N^{2s - 3}$.
\par

We will now use the above result to prove a non-linear analogue of a subspace-type theorem. In particular, it was noted in \cite{HRZ2020} that a quantitative version of the well-known subspace theorem of Evertse, Schmidt and Schlikewei \cite{ESS2002} combined together with Freiman's lemma \cite[Lemma 5.13]{TV2006} implies that for any fixed $c_1, c_2 \in \mathbb{C} \setminus \{0\}$ and for any finite subset $A \subseteq \mathbb{Q}$ with $|A \cdot A| = K|A|$, we have
\[ \sum_{a_1, a_2 \in A} \mathds{1}_{c_1 a_1 + c_2 a_2 = 1} \leq (16)^{2^6(2K + 3)}. \]
In fact,  a more general result can be proven via these techniques which holds for linear equations with many variables (see \cite[Corollary $1.6$]{HRZ2020}),  but for simplicity of exposition, we restrict to the two-fold case here.  While the above inequality is very effective for small values of $K$; it may deliver worse than trivial bounds when $K$ is large. For instance, in the case when $K > c \log |A|$ for some constant $c > (4\log 2)^{-1}$, the right hand size becomes much larger than the trivial upper bound $|A|$. The authors of \cite{HRZ2020} asked whether the above upper bound could be improved to have a polynomial dependence in $K$, and proved that 
%whenever $s=2$ and $c_1, c_2 \in \mathbb{Q} \setminus \{0\}$ in the above setting, one has
 for any fixed $c_1, c_2 \in \mathbb{Q} \setminus \{0\}$ and for any $\varepsilon >0$ and $A \subseteq \mathbb{Q}$ with $|A \cdot A| = K|A|$,  one has
\[ \sum_{a_1, a_2 \in A} \mathds{1}_{c_1 a_1 + c_2 a_2 = 1} \ll_{\varepsilon} K^{C_{\epsilon}}|A|^{\varepsilon}, \]
for some constant $C_{\varepsilon} >0$. Using Theorem $\ref{fin6}$, we can prove a non-linear analogue of the above result.

\begin{theorem} \label{mltsz}
Let $A$ be a finite subset of $\mathbb{Q}$ such that $|A \cdot A| \leq K|A|$ for some $K \geq 1$, let $\varphi \in \mathbb{Q}[x]$ have $\deg \varphi = d \geq 1$ with $\varphi(0) \neq 0$ and let $\varepsilon >0$. Then 
\[ \sum_{a_1, a_2 \in A} \mathds{1}_{a_1 = \varphi(a_2) } \ll_{d, \epsilon} K^{C} |A|^{\varepsilon},\]
for some constant $C = C(d,\varepsilon) >0$.
\end{theorem}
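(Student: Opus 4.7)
The plan is to bound the counting function by an indicator-weighted multiplicative energy to which Theorem \ref{fin6} supplies a clean upper bound, and then to extract a matching lower bound via Cauchy--Schwarz and the Pl\"unnecke--Ruzsa inequality.

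Set $A' = A \setminus (\mathcal{Z}_\varphi \cup \{0\})$ and let
\[ S = \{\, a \in A' : \varphi(a) \in A \,\}. \]
Since $|\mathcal{Z}_\varphi \cup \{0\}| \leq d+1$, the sum on the left-hand side of the claim differs from $|S|$ by at most $O(d)$, so it is enough to bound $|S|$. For $|A|$ sufficiently large in terms of $d$ we have $|A'| \geq |A|/2$, and consequently $|A'\cdot A'| \leq |A\cdot A| \leq 2K|A'|$; this places $A'$ within the hypotheses of Theorem \ref{fin6} with doubling at most $2K$.

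Fix an integer $s$ to be chosen at the end in terms of $\varepsilon$, take $\vec{\varphi} = (\varphi,\ldots,\varphi) \in \mathbb{Q}[x]^{2s}$, and set $\frak{a} = \mathds{1}_S$. Theorem \ref{fin6} then yields the upper bound
\[ M_{s,\frak{a},\vec{\varphi}}(A') \ll_{s,d} K^{Cs}(\log|A|)^{2s}\,|A|\cdot|S|^s, \]
with $C = C(d,s)$. On the other hand, the defining property $\varphi(S) \subseteq A$ means that the representation function
\[ r(t) = \#\{\,(a_1,\ldots,a_s) \in S^s : \varphi(a_1)\cdots\varphi(a_s) = t\,\} \]
is supported on $A^{(s)}$, whose cardinality is at most $K^{O(s)}|A|$ by the Pl\"unnecke--Ruzsa inequality (Lemma \ref{pr21}). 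A single Cauchy--Schwarz step then delivers
\[ M_{s,\frak{a},\vec{\varphi}}(A') = \sum_{t} r(t)^2 \geq \frac{|S|^{2s}}{|A^{(s)}|} \gg \frac{|S|^{2s}}{K^{O(s)}\,|A|}. \]

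Comparing the two estimates and cancelling one factor of $|S|^s$ gives $|S|^s \ll K^{(C+O(1))s}(\log|A|)^{2s}|A|^2$, so $|S| \ll K^{C'}(\log|A|)^2\,|A|^{2/s}$ with $C' = C'(d,s)$. Choosing $s = \lceil 4/\varepsilon\rceil$ makes $|A|^{2/s} \leq |A|^{\varepsilon/2}$, and $(\log|A|)^2 \leq |A|^{\varepsilon/2}$ once $|A|$ is sufficiently large in terms of $\varepsilon$; the small-$|A|$ regime being trivial from $|S|\leq|A|$, this produces the desired inequality with $C = C(d,\varepsilon)$. The main obstacle is already packaged into Theorem \ref{fin6} itself, which provides the upper bound on the relevant mean value; the only strategic choice in the reduction is to take $\vec{\varphi} = (\varphi,\ldots,\varphi)$, precisely so that the containment $\varphi(S) \subseteq A$ translates into the matching Cauchy--Schwarz lower bound above, while the excision of $0$ and the roots of $\varphi$ is a minor technical adjustment that costs only $O(d)$ in the count and a factor of $2$ in the doubling constant.
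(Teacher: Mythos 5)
Your proof is correct and follows essentially the same route as the paper: reduce the count to $|S|$ with $S=\{a : \varphi(a)\in A\}$, bound $M_{s,\mathds{1}_S,\vec{\varphi}}$ from above via Theorem \ref{fin6}, match it from below by Cauchy--Schwarz together with the (multiplicative) Pl\"unnecke--Ruzsa bound on $|A^{(s)}|\supseteq \varphi(S)^{(s)}$, and take $s\asymp 1/\varepsilon$. The only difference is cosmetic: the paper first partitions $\mathbb{R}$ into $O_d(1)$ intervals avoiding $\mathcal{Z}_\varphi\cup\{0\}$, whereas you simply delete those $O(d)$ points from $A$, which serves the same purpose.
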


\begin{proof}
We begin by partitioning $\mathbb{R}$ as $\mathbb{R} = I_1 \cup \dots \cup I_{r} \cup I_{r+1}$, where $r \ll_d 1$ and $I_1, \dots, I_r$ are open intervals such that the derivative $\varphi'$ is non-zero on each such interval and such that the set $Z_{\varphi} \cup \{0\} \subseteq I_{r+1}$ with $|I_{r+1}| \ll_{d} 1$. Let $A_i = A \cap I_{i}$ for each $1 \leq i \leq r+1$. Next, given any two finite sets $X,Y$, we denote $r(X,Y)$ to be the number of solutions to the equation $x = \varphi(y)$, with $x \in X,  y \in Y$.  With this notation in hand, we note that for any $1 \leq i \leq r$, we have that $r(A_{r+1}, A_i) + r(A_i, A_{r+1}) \ll_d |A_{r+1}| \leq |I_{r+1}| \ll_d 1$ since fixing either of $x,y$ in the equation $x = \varphi(y)$ fixes the other variable up to $O_d(1)$ choices.   Thus, it suffices to upper bound $r(A_i, A_j)$ for every $1 \leq i, j \leq r$.  Fixing some $1 \leq i,j \leq r$, we define $S = \{ a \in A_j : \varphi(a) \in A_i\}$, whence,  $r(A_i,A_j) \ll_d |S|$. Since $S \subseteq A$, we may utilise Theorem $\ref{fin6}$ to deduce that for every $s \geq 2$, we have
\[ M_{s, \varphi}(S)  \ll_{s,d} |A| K^{ Cs} (\log |A|)^{2s} |S|^{s} , \]
where $C = 10 + 24 \log (d+3) + 6 \log (2s)$. Applying Cauchy-Schwarz inequality, we get that
\[ |\varphi(S)^{(s)}| \gg_{s,d} |S|^{s} K^{-Cs} (\log |A|)^{-2s} |A|^{-1}. \]
On the other hand, since $\varphi(S) \subseteq A_i \subseteq A$, we may apply the Pl{\"u}nnecke--Ruzsa  inequality (see Lemma \ref{pr21}) to deduce that
\[ |\varphi(S)^{(s)}|  \leq |A^{(s)}| \leq K^{s} |A|. \]
This, together with the preceding expression, implies that
\[ |S| \ll_{s,d} K^{C+1} (\log |A|)^2 |A|^{2/s},\]
for every $s \geq 2$. Noting the fact that $\log |A| \ll_{\varepsilon} |A|^{\varepsilon/4}$, we may choose $s$ to be sufficiently large in terms of $\varepsilon$, say, $s = \ceil{10 \epsilon^{-1}}$ to obtain the desired result.
\end{proof}

Theorem $\ref{mltsz}$ can be interpreted as a bound on the number of points of $A \times A$ that lie on the polynomial curve $y = \varphi(x)$. Naturally, one may employ this result to further prove an incidence estimate for point sets that are of the form $A \times A$, with $A \subseteq \mathbb{Q}$ satisfying $|A \cdot A| \leq K|A|$ for some parameter $K \geq 1$, and finite sets of curves of the form $y = \varphi(x)$, where $\varphi \in \mathbb{Q}[x]$ satisfies $\varphi(0) \neq 0$, but we do not pursue this here.
\par

%\begin{theorem} \label{sztrf}
%Let $d$ be a natural number, let $A$ be a finite subset of $\mathbb{Q} \setminus \{0\}$ such that $|A \cdot A| \leq K|A|$ for some $K \geq 1$, let $L$ be a finite subset of $\mathbb{R}[x]$ such that $\deg \varphi \leq d$ for every $\varphi \in L$ and let $\varepsilon >0$. Then 
%\[ \sum_{(a_1, a_2) \in A^2} \sum_{\varphi \in L} \mathds{1}_{a_1 = \varphi(a_2)} \ll_{d, \varepsilon} |A|^2 + K^C |A|^{\varepsilon} |L|, \]
%for some constant $C = C(d,\varepsilon) >0$.
%\end{theorem}

%---------------------------------------------------------------------------------------------------------------------------
%---------------------------------------------------------------------------------------------------------------------------
%---------------------------------------------------------------------------------------------------------------------------
%---------------------------------------------------------------------------------------------------------------------------
%---------------------------------------------------------------------------------------------------------------------------
%---------------------------------------------------------------------------------------------------------------------------

\section{Properties of mixed energies}

We begin this section by recording some notation. For every real number $\lambda \neq 0$ and every $\varphi \in \mathbb{R}[x]$, we define the polynomial $\varphi_{\lambda} \in \mathbb{R}[x]$ by writing $\varphi_{\lambda}(x) = \varphi (\lambda x)$ for every $x \in \mathbb{R}$. Furthermore, given $s \in \mathbb{N}$ and $\vec{\varphi} \in \mathbb{R}[x]^{s}$, we denote $\vec{\varphi}_{\lambda} = (\varphi_{1, \lambda}, \dots, \varphi_{s, \lambda})$ and we write $\lambda \cdot \vec{\varphi} = (\lambda \varphi_1, \dots, \lambda \varphi_{s})$. It is worth noting that 
\[  \mathcal{Z}_{\vec{\varphi}_{\lambda}} = \lambda^{-1} \cdot \mathcal{Z}_{\vec{\varphi}} \ \ \text{and} \ \  \mathcal{Z}_{\lambda \cdot \vec{\varphi}} = \mathcal{Z}_{\vec{\varphi}}, \]
where for every finite set $X \subseteq \mathbb{R}$ and for every $\nu \in \mathbb{R}$, we denote $\nu \cdot X = \{ \nu x \ | \ x \in X\}$. Next, given some function $\frak{a} : \mathbb{R} \to [0, \infty)$ and some finite sets $A_1, \dots, A_{2s} \subseteq \mathbb{R}$, we define
\[ E_{s, \frak{a}}(A_1, \dots A_{2s}) = \sum_{a_1, \dots, a_{2s} \in A} \frak{a}(a_1) \dots \frak{a}(a_{2s}) \mathds{1}_{a_1 + \dots + a_{s} = a_{s+1} + \dots +a_{2s}} . \]
If $A_1 = \dots = A_{2s} = A$, we denote $E_{s, \frak{a}}(A) = E_{s, \frak{a}}(A_1, \dots A_{2s})$. Moreover, when the function $\frak{a}$ satisfies $\frak{a}(x) = 1$ for every $x \in \mathbb{R}$, we suppress the dependence on $\frak{a}$, and thus, we write $E_{s}(A_1, \dots, A_{2s}) = E_{s, \frak{a}}(A_1, \dots, A_{2s})$.
\par 

Our first aim in this section is to prove the following generalisation of \cite[Lemma $3.2$]{Mu2021c}.

\begin{lemma} \label{reprove}
Let $A_1, \dots, A_{2s}$ be a finite sets of real numbers and let $\frak{a} : \mathbb{R} \to [0, \infty)$. Then
\[ E_{s,\frak{a}}(A_1, \dots, A_{2s}) \leq E_{s, \frak{a}}(A_1)^{1/2s} \dots E_{s, \frak{a}}(A_{2s})^{1/2s}. \]
Moreover, for natural number $r$ and for finite subsets $A_1, \dots, A_r \subseteq \mathbb{R}$, we have
\[ E_{s, \frak{a}}(A_1 \cup \dots \cup A_r) \leq r^{2s} \sup_{1 \leq i \leq r} E_{s, \frak{a}}(A_i) . \]
Finally, if $\frak{a}(x) = 1$ for every $x \in \mathbb{R}$, then for every $1 \leq l < s$, we have
\[ E_{s, \frak{a}}(A_1) \leq |A_1|^{2s - 2l} E_{l, \frak{a}}(A_1) . \]
\end{lemma}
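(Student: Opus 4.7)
The plan is to recast each claim in Fourier-analytic language. For any finite $B \subseteq \mathbb{R}$, define the weighted exponential sum $f_B(\alpha) = \sum_{a \in B} \frak{a}(a) e(\alpha a)$. Orthogonality on $[0,1)$ gives
\[
E_{s,\frak{a}}(A_1, \dots, A_{2s}) = \int_0^1 \prod_{i=1}^{s} f_{A_i}(\alpha) \prod_{i=s+1}^{2s} \overline{f_{A_i}(\alpha)} \, d\alpha,
\]
and in particular $E_{s,\frak{a}}(B) = \|f_B\|_{2s}^{2s}$. With this dictionary in place, all three inequalities reduce to standard $L^p$ estimates on $[0,1)$.

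For the first claim I bound the integrand pointwise in absolute value and apply H\"older's inequality in the $2s$ factors with equal exponents $1/(2s)$, which yields
\[
E_{s,\frak{a}}(A_1,\dots,A_{2s}) \leq \int_0^1 \prod_{i=1}^{2s} |f_{A_i}(\alpha)| \, d\alpha \leq \prod_{i=1}^{2s} \|f_{A_i}\|_{2s} = \prod_{i=1}^{2s} E_{s,\frak{a}}(A_i)^{1/2s}.
\]
For the union bound I first reduce to the disjoint case by replacing each $A_i$ with $A_i \setminus (A_1 \cup \dots \cup A_{i-1})$; this preserves the union, and since $\frak{a} \geq 0$ each $E_{s,\frak{a}}(A_i)$ only decreases. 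On disjoint sets one has $f_{A_1 \cup \dots \cup A_r} = \sum_{i=1}^{r} f_{A_i}$, so Minkowski's inequality in $L^{2s}$ gives
\[
\|f_{A_1 \cup \dots \cup A_r}\|_{2s} \leq \sum_{i=1}^{r}\|f_{A_i}\|_{2s} \leq r \sup_{i}\|f_{A_i}\|_{2s},
\]
and taking the $2s$-th power yields the claim.

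For the final inequality, with $\frak{a} \equiv 1$ one has the trivial pointwise bound $|f_{A_1}(\alpha)| \leq |A_1|$. Splitting $|f_{A_1}|^{2s} = |f_{A_1}|^{2s-2l}\,|f_{A_1}|^{2l}$ and using this bound on the first factor,
\[
E_s(A_1) = \int_0^1 |f_{A_1}(\alpha)|^{2s}\,d\alpha \leq |A_1|^{2s - 2l}\int_0^1 |f_{A_1}(\alpha)|^{2l}\,d\alpha = |A_1|^{2s-2l} E_l(A_1).
\]
I do not foresee a genuine obstacle: each part is an immediate consequence of a routine $L^p$ inequality, the only minor nuance being the disjointness reduction in the union bound. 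A purely combinatorial route using representation functions and Cauchy--Schwarz is also available, but the Fourier treatment handles all three parts uniformly and is cleaner.
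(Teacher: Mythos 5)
Your proof rests on the identity $E_{s,\frak{a}}(A_1,\dots,A_{2s}) = \int_0^1 \prod_{i\le s} f_{A_i}(\alpha)\prod_{i>s}\overline{f_{A_i}(\alpha)}\,d\alpha$, but this orthogonality is only valid when the frequencies are integers, whereas the lemma is stated (and later needed, e.g.\ for the sets $\psi_X(\varrho_X(X_i))$ of logarithms in Proposition 3.2) for finite sets of \emph{real} numbers. For $\xi\in\mathbb{R}\setminus\mathbb{Z}$ one has $\int_0^1 e(\alpha\xi)\,d\alpha = \frac{e(\xi)-1}{2\pi i \xi}\neq 0$, so the cross terms $e(\alpha(a_1+\dots-a_{2s}))$ with non-integer (indeed possibly irrational) exponent do not vanish; concretely, for $A=\{0,1/4\}$, $\frak{a}\equiv 1$, $s=1$ one gets $\int_0^1|1+e(\alpha/4)|^2\,d\alpha = 2+\tfrac{4}{\pi}\neq E_1(A)=2$. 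So the dictionary $E_{s,\frak{a}}(B)=\|f_B\|_{L^{2s}[0,1)}^{2s}$ is false in the generality required, and every subsequent step inherits this gap.

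The repair is exactly the point of the paper's proof: instead of $[0,1)$ one integrates over $[0,R]$, using $I(R,\xi)=\int_0^R e(\xi\alpha)\,d\alpha$ with $I(R,0)=R$ and $|I(R,\xi)|\ll|\xi|^{-1}$ for $\xi\neq 0$, so that
\[
\int_{[0,R]} f_{A_1}\cdots\overline{f_{A_{2s}}}\,d\alpha = R\,E_{s,\frak{a}}(A_1,\dots,A_{2s}) + O\bigl(|A_1|\cdots|A_{2s}|\,\xi_0^{-1}\bigr),
\]
where $\xi_0$ is the least nonzero value of $|a_1+\dots-a_{2s}|$; choosing $R$ huge compared with $\varepsilon^{-1}\xi_0^{-1}$ and the cardinalities makes the error an arbitrary $\varepsilon$, and then your three steps (generalized H\"older with exponents $2s$, the disjointification-plus-Minkowski or the paper's termwise expansion of the union, and the pointwise bound $|f_{A_1}|\leq|A_1|$) go through verbatim on these truncated integrals. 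In other words, your H\"older/Minkowski/pointwise skeleton coincides with the paper's argument, but as written the proof only establishes the lemma for integer sets; without the $[0,R]$ limiting device (or some Freiman-isomorphism reduction of the finite real sets to integers, which you would have to set up) it does not prove the stated result.
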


\begin{proof}
We begin by focusing on proving the first inequality.  Firstly,  note that this inequality remains invariant under replacing the function $\frak{a}$ by $\frak{a}/M$,  for any $M > 0$.  Moreover,  since $A_1,\dots, A_{2s}$ are finite sets,  we may set $M = \max_{a \in A_1 \cup \dots \cup A_{2s}} \frak{a}(a) + 100$ in the preceding statement to ensure that $0 \leq \frak{a}(a) \leq 1$ for any $a \in A_1 \cup \dots \cup A_{2s}$.  We further point out that it suffices to prove that for every $\varepsilon >0$, we have
\[ E_{s, \frak{a}}(A_1, \dots, A_{2s}) \leq E_{s, \frak{a}}(A_1)^{1/2s} \dots E_{s, \frak{a}}(A_{2s})^{1/2s} + \varepsilon. \]
For the purposes of this proof, we define, for each $(\xi, R) \in \mathbb{R}^2$ satisfying $\xi \neq 0$ and $R>0$, the quantity $I(R, \xi) = \int_{[0,R]} e( \xi \alpha) d \alpha$. When $\xi \neq 0$, we see that $|I(R, \xi)| \ll |\xi |^{-1}$ while $I(R,0) = R$. 
%we let
%\[ \mathcal{U} = \{ \vec{a} \in A_1 \times \dots \times A_{2s} \ | \ \sum_{i=1}^{s} (a_i - a_{i+s} ) = 0 \} \ \text{and} \ \mathcal{V} = \{ \vec{a} \in A_1 \times \dots \times A_{2s} \ | \ \sum_{i=1}^{s} (a_i - a_{i+s} ) \neq 0 \}, \]
%and 
We now define, for each $1 \leq i \leq 2s$, the exponential sum $f_i : [0, \infty) \to \mathbb{C}$ as 
\[ f_i(\alpha) = \sum_{a \in A_i} \frak{a}(a) e(a \alpha) , \]
and we let $X = A_1 \cup \dots \cup A_{2s}$. Finally, we write
\[ \xi_0 = \min_{\vec{a} \in X^{2s} \ \text{such that} \ a_1 + \dots - a_{2s} \neq 0} |a_1 + \dots - a_{2s} | . \]
\par

With this discussion in hand, it is straightforward to note that for each $R>0$, we have
\[ \int_{[0,R]} f_1(\alpha) \dots \overline{f_{2s}(\alpha)} d \alpha = \sum_{\vec{a} \in A_1 \times \dots \times  A_{2s} } \frak{a}(a_1) \dots \frak{a}(a_{2s}) I(R, a_1 + \dots - a_{2s}), \]
whence, 
\[ \int_{[0,R]} f_1(\alpha) \dots \overline{f_{2s}(\alpha)} d \alpha  = R E_{s, \frak{a}}(A_1, \dots, A_{2s}) + O(|A_1| \dots |A_{2s}| \xi_0^{-1} ) ,\]
where we have used the fact that  $0 \leq \frak{a}(a) \leq 1$ for any $a \in A_1 \cup \dots \cup A_{2s}$.  Similarly, for each $1 \leq i \leq 2s$, we have that
\begin{equation} \label{sof2}
 \int_{[0, R]} |f_i(\alpha)|^{2s} d\alpha = R E_{s, \frak{a}}(A_i) + O(|A_i|^{2s} \xi_0^{-1}). 
\end{equation}
Amalgamating these expressions with a standard application of H\"{o}lder's inequality gives us
\[ E_{s, \frak{a}}(A_1, \dots, A_{2s}) + O( R^{-1} |A_1|\dots |A_{2s}|  \xi_0^{-1} ) \leq \prod_{i=1}^{2s} (E_{s, \frak{a}}(A_i) + O(R^{-1} |A_i|^{2s} \xi_0^{-1} ) )^{1/2s} . \]
Choosing $R$ to be sufficiently large, say $R \geq \varepsilon^{-1} (4s^2 |A_1| \dots |A_{2s} |)^{4s^2} \xi_0^{-1}$, delivers the first inequality stated in our lemma.
\par
The second inequality can be swiftly deduced from the first inequality since
\begin{align*}
 E_{s, \frak{a}}(A_1 \cup \dots \cup A_r)  
 & = \sum_{1 \leq  i_1, \dots, i_{2s} \leq r}E_{s, \frak{a} }(A_{i_1}, \dots, A_{i_{2s}}) 
 \leq \sum_{1 \leq  i_1, \dots, i_{2s} \leq r} \prod_{j=1}^{2s}   E_{s, \frak{a}}(A_{i_j})^{1/2s} \\
 & = \prod_{j=1}^{2s} \Big(\sum_{i=1}^{r} E_{s, \frak{a}}(A_i)^{1/2s}\Big) \leq r^{2s} \sup_{1 \leq i \leq r} E_{s, \frak{a}}(A_i).
\end{align*}
The proof of the third inequality follows similarly, wherein, we see that it suffices to show
\[ E_{s, \frak{a}}(A_1) \leq |A_1|^{2s - 2l} E_{l, \frak{a}}(A_1) + \varepsilon,  \]
for each $\varepsilon >0$. This follows from noting the fact that $|f_1(\alpha)| \leq |A|$ for each $0 \leq \alpha \leq R$ along with $\eqref{sof2}$, and then choosing $R$ to be some sufficiently large real number.
\end{proof}

We now generalise Lemma $\ref{reprove}$ for mixed energies of the form $J_{s, \frak{a}, \vec{\varphi}}(A)$. Thus,  let $A_1, \dots, A_{2s}$ be finite subsets of $\mathbb{Z}$, let $\vec{\varphi} \in \mathbb{Z}[x]^{2s}$ be a vector and let $\frak{a}:\mathbb{N} \to [0, \infty)$ be a function supported on $A_1 \cup \dots \cup A_{2s}$. We define $J_{s, \frak{a}, \vec{\varphi}}(A_1, \dots, A_{2s})$ to be the quantity
\[  \sum_{a_1 \in A_1} \dots \sum_{a_{2s} \in A_{2s}} \frak{a}(a_1) \dots \frak{a}(a_{2s}) \mathds{1}_{a_1 \dots a_s = a_{s+1} \dots a_{2s}} \mathds{1}_{\varphi_1(a_1) \dots \varphi_{s}(a_s) = \varphi_{s+1}(a_{s+1}) \dots \varphi_{2s}(a_{2s})}, \]
that is, a weighted count of the number of solutions to the system of equations
\begin{equation} \label{mixedsys}
 x_1 \dots x_s = x_{s+1} \dots x_{2s} \ \ \text{and} \ \ \varphi_1(x_1) \dots \varphi_s(x_s) = \varphi_{s+1}(x_{s+1}) \dots \varphi_{2s}(x_{2s}) ,
 \end{equation}
with $x_i \in A_i$ for each $1 \leq i \leq 2s$. We now present our second lemma that we will prove in this section.

\begin{Proposition} \label{gvup}
Let $\vec{\varphi} \in \mathbb{Q}[x]^{2s}$ be a vector such that $\deg \varphi_i \leq d$ for every $1 \leq i \leq 2s$ and let  $A_1, \dots, A_{2s}$ be finite subsets of $\mathbb{Q}\setminus (\{0\} \cup \mathcal{Z}_{\vec{\varphi}})$, and let $\frak{a}: \mathbb{Q} \to [0,\infty)$ be a function. Then we have that
\begin{equation} \label{eqz1} J_{s, \frak{a}, \vec{\varphi}}(A_1, \dots, A_{2s}) \leq (d+2)^{2s} J_{s, \frak{a}, \varphi_1}(A_1)^{1/2s} \dots J_{s, \frak{a}, {\varphi}_{2s}}(A_{2s})^{1/2s}   .    
\end{equation}
Moreover,  if for each $1 \leq j \leq 2s$,  we have some open interval $I_j \subseteq \mathbb{R}$ such that $x y >0$ and $\varphi_i(x) \varphi_i(y) > 0$ for every $x,y \in I_j$ and $A_j \subseteq I_j$,  then
\begin{equation} \label{eqz2}
 J_{s, \frak{a}, \vec{\varphi}}(A_1, \dots, A_{2s}) \leq J_{s, \frak{a}, \varphi_1}(A_1)^{1/2s} \dots J_{s, \frak{a}, {\varphi}_{2s}}(A_{2s})^{1/2s}   .    
\end{equation}
\end{Proposition}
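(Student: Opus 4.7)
The plan is to mirror the proof of Lemma~\ref{reprove}, but with a two-dimensional frequency integral that simultaneously encodes both equations in \eqref{mixedsys} after converting products to sums via the logarithm of the absolute value. I will establish \eqref{eqz2} first and then deduce \eqref{eqz1} from it by a sign-splitting argument.

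For \eqref{eqz2}, for each $1 \le j \le 2s$ I define
\[ f_j(\alpha,\beta) = \sum_{a \in A_j} \frak{a}(a)\, e\bigl( \alpha \log|a| + \beta \log|\varphi_j(a)| \bigr), \]
which is well-defined because $A_j \subseteq \mathbb{Q} \setminus (\{0\} \cup \mathcal{Z}_{\vec{\varphi}})$, together with the auxiliary absolute-value count
\[ \tilde{J}(A_1,\dots,A_{2s}) = \sum_{\vec{a}} \prod_{i=1}^{2s}\frak{a}(a_i)\cdot \mathds{1}_{|a_1 \cdots a_s| = |a_{s+1} \cdots a_{2s}|}\cdot \mathds{1}_{|\varphi_1(a_1) \cdots \varphi_s(a_s)| = |\varphi_{s+1}(a_{s+1}) \cdots \varphi_{2s}(a_{2s})|}, \]
which satisfies $J_{s,\frak{a},\vec{\varphi}}(A_1,\dots,A_{2s}) \le \tilde{J}(A_1,\dots,A_{2s})$ trivially. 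Expanding the product $f_1\cdots f_s \overline{f_{s+1}}\cdots \overline{f_{2s}}$, integrating over $(\alpha,\beta)\in[0,R]^2$, and using the elementary bound $|\int_0^R e(\xi\alpha)\,d\alpha| \le \min(R,|\xi|^{-1})$ (as in Lemma~\ref{reprove}), I will show that
\[ R^{-2} \int_{[0,R]^2} f_1\cdots f_s \,\overline{f_{s+1}}\cdots \overline{f_{2s}}\, d\alpha\, d\beta \longrightarrow \tilde{J}(A_1,\dots,A_{2s}) \quad\text{as } R \to \infty, \]
with error of size $O_{\vec{A},\frak{a}}(R^{-1}\xi_0^{-1} + R^{-2}\xi_0^{-2})$, where $\xi_0 > 0$ is the minimum nonzero value of the finitely many linear forms $\log|a_1|+\dots-\log|a_{2s}|$ and $\log|\varphi_1(a_1)|+\dots-\log|\varphi_{2s}(a_{2s})|$ over $\vec{a}\in\prod_j A_j$. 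Under the hypothesis $A_j\subseteq I_j$, both $\operatorname{sgn}(a)$ and $\operatorname{sgn}(\varphi_j(a))$ are constant on $A_j$, so the diagonal absolute-value equations are equivalent to the signed ones, giving $R^{-2} \int_{[0,R]^2} |f_j|^{2s}\,d\alpha\,d\beta \to \tilde{J}(A_j,\dots,A_j) = J_{s,\frak{a},\varphi_j}(A_j)$. Applying H\"older's inequality with exponent $2s$ to each factor of the cross integral, dividing by $R^2 = \prod_{j=1}^{2s} R^{2/(2s)}$, and letting $R\to\infty$ with an $\varepsilon$-error exactly as in Lemma~\ref{reprove} yields $\tilde{J}(A_1,\dots,A_{2s}) \le \prod_{j=1}^{2s} J_{s,\frak{a},\varphi_j}(A_j)^{1/(2s)}$, which combined with $J\le\tilde{J}$ proves \eqref{eqz2}.

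For \eqref{eqz1}, the plan is to decompose each $A_j$ into sign-constant pieces. Since $\deg\varphi_j \le d$, the set $\mathcal{Z}_{\varphi_j}\cup\{0\}$ has at most $d+1$ elements, hence $\mathbb{R}\setminus(\mathcal{Z}_{\varphi_j}\cup\{0\})$ is a disjoint union of at most $d+2$ open intervals $I_j^{(1)},\dots,I_j^{(d+2)}$, on each of which both $\operatorname{sgn}(x)$ and $\operatorname{sgn}(\varphi_j(x))$ are constant. Setting $A_j^{(l)} = A_j \cap I_j^{(l)}$, multilinear expansion gives
\[ J_{s,\frak{a},\vec{\varphi}}(A_1,\dots,A_{2s}) = \sum_{l_1,\dots,l_{2s}=1}^{d+2} J_{s,\frak{a},\vec{\varphi}}\bigl(A_1^{(l_1)},\dots,A_{2s}^{(l_{2s})}\bigr), \]
and I will apply \eqref{eqz2} to each of the at most $(d+2)^{2s}$ summands, together with the trivial monotonicity $J_{s,\frak{a},\varphi_j}(A_j^{(l_j)}) \le J_{s,\frak{a},\varphi_j}(A_j)$, to obtain \eqref{eqz1}.

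The only delicate point is the $R\to\infty$ passage and tracking the error terms coming from $|\int_0^R e(\xi\alpha)\,d\alpha| \le |\xi|^{-1}$ when $\xi \ne 0$; this is an entirely routine analogue of the corresponding step in Lemma~\ref{reprove} once $\xi_0$ has been isolated.
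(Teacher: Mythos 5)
Your proposal is correct, and the core of it differs from the paper's argument in an interesting way. The reduction of \eqref{eqz1} to \eqref{eqz2} — partitioning $\mathbb{R}\setminus(\mathcal{Z}_{\varphi_j}\cup\{0\})$ into at most $d+2$ sign-constant open intervals, expanding multilinearly and using monotonicity of $J_{s,\frak{a},\varphi_j}$ — is exactly what the paper does. For \eqref{eqz2}, however, the paper does not work with a two-dimensional frequency integral: it first passes to the pairs $(|a|,|\varphi_i(a)|)$, then collapses the simultaneous system of two multiplicative equations into a \emph{single} multiplicative equation via the injective map $\varrho_X(x,y)=x\,y^{p}$ for a sufficiently large prime $p$, takes logarithms via $\psi_X$, and finally quotes Lemma \ref{reprove} as a black box to get the H\"older-type splitting; the sign-constancy hypothesis is then used, as in your argument, to identify the diagonal counts with $J_{s,\frak{a},\varphi_i}(A_i)$. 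Your route keeps both equations alive and instead runs a two-dimensional analogue of the proof of Lemma \ref{reprove} directly, with frequencies $(\log|a|,\log|\varphi_j(a)|)$, integration over $[0,R]^2$, generalized H\"older with exponents $2s$, and the same $\xi_0$-based error analysis as $R\to\infty$; the inequality $J\le\tilde{J}$ and the identification $\tilde{J}(A_j,\dots,A_j)=J_{s,\frak{a},\varphi_j}(A_j)$ under sign constancy are handled correctly, so the argument goes through. What each approach buys: yours is self-contained and avoids the prime-power embedding trick altogether, while the paper's version confines all the analysis to the one-dimensional Lemma \ref{reprove} and sets up the maps $\varrho_X,\psi_X$, which it reuses later (for instance in the proofs of Lemma \ref{wm} and the multiplicative Chang-type lemmas) to convert mixed multiplicative systems into additive energies.
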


\begin{proof}
We begin by noting that 
%%\[ \varphi_1(x_1) + \dots + \varphi_s(x_s) = \varphi_{s+1}(x_{s+1}) + \dots + \varphi_{2s}(x_{2s}) \]
%%\[ a_1 x_1^{d_1} + \dots + a_s x_s^{d_s} = a_{s+1}x_{s+1}^{d_{s+1}} + \dots + a_{2s}x_{2s}^{d_{2s}} \]
%%\[ Y a_1 X^d x_1^{d_1} + \dots + Y a_s X^d x_s^{d_s} = Y a_{s+1} X^d x_{s+1}^{d_{s+1}} + \dots + Y a_{2s} X^d x_{2s}^{d_{2s}} \]
%%\[ Y a_1 X^{d-d_1} (Xx_1)^{d_1} + \dots + Y a_s X^{d-d_s}(Xx_s)^{d_s} = Y a_{s+1} X^{d- d_{s+1}} (Xx_{s+1})^{d_{s+1}} + \dots + Y a_{2s} X^{d-d_{2s}} (Xx_{2s})^{d_{2s}} \]
%\[ \varphi_1(x_1)  \dots  \varphi_s(x_s) = \varphi_{s+1}(x_{s+1})  \dots \varphi_{2s}(x_{2s}) \ \ \text{and} \ \ \ x_1 \dots x_s = x_{s+1} \dots x_{2s} \]
%holds true if and only if we have
%\[  ({\lambda}^{d+1}  \varphi_{1, {\lambda}^{-1}}({\lambda} x_1) ) \dots  ({\lambda}^{d+1}  \varphi_{s, {\lambda}^{-1}}({\lambda} x_s) )=   ({\lambda}^{d+1}   \varphi_{s+1, {\lambda}^{-1}}({\lambda}x_{s+1} ) ) \dots  ({\lambda}^{d+1}  \varphi_{2s, {\lambda}^{-1}}({\lambda} x_{2s}) )\]
%and
%\[ (\lambda x_1) \dots (\lambda x_s) = (\lambda x_{s+1}) \dots (\lambda x_{2s}), \]
%for every ${\lambda} \neq 0$. 
\[ \prod_{i=1}^{s} \varphi_i(x_i)  =  \prod_{i=1}^{s} \varphi_{s+i}(x_{s+i})  \ \ \text{and} \ \ \prod_{i=1}^{s} x_i = \prod_{i=1}^{s} x_{s+i} \]
holds true if and only if we have
\[ \prod_{i=1}^{s}  {\lambda}^{d+1}  \varphi_{i, {\lambda}^{-1}}({\lambda} x_i) = \prod_{i=1}^{s}  {\lambda}^{d+1}  \varphi_{s+i, {\lambda}^{-1}}({\lambda} x_{s+i}) \ \ \text{and} \ \ \prod_{i=1}^{s} \lambda x_i = \prod_{i=1}^{s} \lambda x_{s+i}, \]
for every ${\lambda} \neq 0$. Moreover, note that
\[ \mathcal{Z}_{\lambda^{d+1} \cdot \vec{\varphi}_{\lambda^{-1}} } = \mathcal{Z}_{ \vec{\varphi}_{\lambda^{-1}}  } = \lambda \cdot \mathcal{Z}_{ \vec{\varphi}}. \]
Hence, upon dilation by an appropriate natural number $\lambda$, we may assume that $\vec{\varphi}$ is an element of $\mathbb{Z}[x]^{2s}$ as well as that $A_1, \dots, A_{2s}$ are finite subsets of $\mathbb{Z}\setminus (\{0\} \cup \mathcal{Z}_{\vec{\varphi}})$. 
\par

Next, let $X$ be a finite subset of $\mathbb{N}^2$. Then, there exists a sufficiently large distinct prime number $p$ such that the map $\varrho_{X} : X  \to \mathbb{N}$, defined as
\[ \varrho_{X}(  p_1^{\alpha_1} \dots p_r^{\alpha_r},  q_1^{\beta_1} \dots q_{t}^{\beta_t} ) =  p_1^{\alpha_1} \dots p_r^{\alpha_r} (  q_1^{\beta_1} \dots q_{t}^{\beta_t} )^{p} \]
for all primes $p_1, \dots, p_r,q_1, \dots, q_t$ and for all non-negative integers $\alpha_1, \dots, \alpha_r, \beta_1, \dots, \beta_t, r, t$, is bijective onto its image and satisfies the fact that for every $(x_1, y_1), \dots, (x_{2s}, y_{2s}) \in X$, we have
\[ x_1 \dots x_s = x_{s+1} \dots x_{2s} \ \text{and} \ y_1 \dots y_s = y_{s+1} \dots y_{2s} \]
if and only if
\[ \varrho_{X}(x_1, y_1) \dots \varrho_{X}(x_s ,y_s) = \varrho_{X}(x_{s+1}, y_{s+1}) \dots \varrho_{X}(x_{2s}, y_{2s}) .\]
Moreover, we can further define the logarithmic map $\psi_{X} : \varrho_{X}(X) \to [0, \infty)$ by writing $\psi_{X}(n) = \log n$ for every $n \in \varrho_X(X)$. Note that $\psi_{X}$ is bijective onto its image and satisfies the fact that for every $z_1, \dots, z_{2s} \in \varrho_X(X)$, we have that
\[ z_1 \dots z_{s} = z_{s+1} \dots z_{2s} \ \text{if and only if} \ \psi_{X}(z_1) + \dots + \psi_{X}(z_{s}) = \psi_{X}(z_{s+1}) + \dots + \psi_{X}(z_{2s}) . \]          
%Thus, the map $\sigma : X \times Y \to [0, \infty)$ defined as $\sigma(x,y) = \psi_{X}(\varrho_{X}(x,y))$ is bijective onto its image as well as preserves solutions to an appropriate system of equations.
\par

With these preliminary manoeuvres finished, we will now proceed to prove our proposition.  First, we will show that $\eqref{eqz2}$ implies $\eqref{eqz1}$, and then we will prove $\eqref{eqz2}$. In order to prove the first part,  note that for each $1 \leq i \leq 2s$, we may partition 
\[ \mathbb{R}\setminus (\{0\}\cup \mathcal{Z}_{\varphi_i}) = I_{i,1}  \cup \dots \cup I_{i,{r_i}}, \] 
where $I_{i,1}, \dots, I_{i,r_i}$ are open, pairwise disjoint intervals such that $\varphi_i(x) \varphi_i(y)>0$ and $xy >0$ for all $x,y \in I_{i,j}$, where $1 \leq j \leq r_i$ and $r_i \leq d+2$. Writing $A_{i,j} = A_{i} \cap I_{i,j}$ for each $1 \leq i \leq 2s$ and $1 \leq j \leq r_i$, we see that
\[ J_{s, \frak{a}, \vec{\varphi} }(A_1, \dots, A_{2s}) = \sum_{1 \leq j_1\leq r_1} \dots \sum_{1 \leq j_{2s} \leq r_{2s}}  J_{s, \frak{a}, \vec{\varphi}}(A_{1,{j_1}}, \dots, A_{{2s},{j_{2s}}})
% \leq 4^{2s} \sup_{1 \leq j_1, \dots, j_{2s} \leq 4} N_{s}(A_{1_{j_1}}, \dots, A_{{2s}_{j_{2s}}}; \vec{u})  
 , \]
 where we have crucially used the fact that $A_i = \cup_{1 \leq j \leq r_i} A_{i, j}$ for each $1 \leq i \leq 2s$, which itself follows from the hypothesis that $A_1, \dots, A_{2s}$ are subsets of $\mathbb{Q} \setminus (\{0\} \cup \mathcal{Z}_{\vec{\varphi}})$.  Combining the preceding expression with $\eqref{eqz2}$ and the fact that $r_1, \dots, r_{2s} \leq d+2$ then delivers $\eqref{eqz1}$.
% \[ J_{s, \frak{a}, \vec{\varphi}}(A_{1,{j_1}}, \dots, A_{{2s},{j_{2s}}}) \leq J_{s, \frak{a}, \varphi_1}(A_1)^{1/2s} \dots J_{s, \frak{a}, {\varphi}_{2s}}(A_{2s})^{1/2s},\]
%for each fixed choice of $j_1, \dots, j_{2s}$. 

We will now prove $\eqref{eqz2}$,  and so, we define the set $X_i = \{ ( |a| , |\varphi_i(a)| ) : a \in A_i \}$ for each $1 \leq i \leq 2s$. Note that $J_{s, \frak{a}, \vec{\varphi}}(A_1, \dots, A_{2s})$ is bounded by the number of solutions to the system
\[ |a_1| \dots |a_{s}| = |a_{s+1}| \dots |a_{2s}| \ \text{and} \ |\varphi_1(a_1)| \dots |\varphi_s(a_s) | = |\varphi(a_{s+1})| \dots |\varphi(a_{2s})| , \]
with $a_i \in A_i$ for each $1 \leq i \leq 2s$, where each solution is being counted with the weight $\frak{a}(a_1) \dots \frak{a}(a_{2s})$. Consequently, letting $X = \cup_{1 \leq i \leq 2s} X_i$ and $\sigma(x) = \psi_{X}(\varrho_{X}(x))$ for every $x \in X$, we see that
\[ J_{s, \frak{a}, \vec{\varphi}}(A_1, \dots, A_{2s}) \leq E_{s,\frak{a}}(\sigma(X_1,) \dots, \sigma(X_{2s})) \leq E_{s, \frak{a}}(\sigma(X_1))^{1/2s} \dots E_{s, \frak{a}}(\sigma(X_{2s}))^{1/2s} ,\]
where the last inequality follows from Lemma $\ref{reprove}$. By the definition of $\sigma$, we see that for each $1 \leq i \leq 2s$, the quantity $E_{s,\frak{a}}(\sigma(X_i))$ is equal to the number of solutions to the system 
\[ |a_1| \dots |a_{s}| = |a_{s+1}| \dots |a_{2s}| \ \text{and} \ |\varphi_i(a_1)| \dots |\varphi_i(a_s)| = |\varphi_i(a_{s+1})|\dots |\varphi_i(a_{2s})| , \]
with $a_1, \dots, a_{2s} \in A_i$, where each such solution is counted with the weight $\frak{a}(a_1) \dots \frak{a}(a_{2s})$. This, in turn, equals $J_{s, \frak{a}, \varphi_i}(A_i)$ since the functions $x$ and $\varphi_i(x)$ do not change signs as $x$ varies in $A_i$. Combining this with the preceding discussion, we get that
\[ J_{s, \frak{a}, \vec{\varphi}}(A_1, \dots, A_{2s}) \leq J_{s, \frak{a}, \varphi_1}(A_1)^{1/2s} \dots J_{s, \frak{a}, \varphi_{2s}}(A_{2s})^{1/2s} ,\]
which is the desired bound.
\end{proof}

Our final result in this section allows us to prove various other relations between the above type of mixed energies. 
 
\begin{lemma} \label{wm}
Let $d, s, r$ be natural numbers, let $\vec{\varphi}  \in (\mathbb{Q}[x])^{2s}$ satisfy $\deg \varphi_i \leq d$ for each $1 \leq i \leq 2s$. Then for all finite subsets $A_1, \dots, A_{r}$ of $\mathbb{Q}\setminus (\{0\} \cup \mathcal{Z}_{\vec{\varphi}})$ and for every function $\frak{a}: \mathbb{Q} \to [0,\infty)$, we have that
\begin{equation} \label{doj}
 J_{s, \frak{a}, \vec{\varphi} }(A_1 \cup \dots \cup A_{r}) \leq (d+2)^{2s}  r^{2s} \sup_{ 1\leq i \leq r} \sup_{1 \leq j \leq 2s} J_{s, \frak{a}, \varphi_j}(A_i).
 \end{equation}
For every finite subset $A$ of $\mathbb{Q}\setminus (\{0\}\cup\mathcal{Z}_{\varphi_1})$, for every $1 \leq l < s$ and for the function $\frak{a}(x) = 1$ for each $x \in \mathbb{R}$, we have
\begin{equation} \label{doj3}
 J_{s, \frak{a}, \varphi_1}(A) \ll_{s,d} |A|^{2s - 2l } J_{l, \frak{a}, \varphi_1}(A).
 \end{equation}
\end{lemma}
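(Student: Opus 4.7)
To prove \eqref{doj}, I would mimic the strategy used for the second inequality in Lemma \ref{reprove}. Since $\mathds{1}_{A_1 \cup \dots \cup A_r}(a) \leq \sum_{i=1}^{r} \mathds{1}_{A_i}(a)$ for every $a$, expanding the definition of $J_{s, \frak{a}, \vec{\varphi}}$ yields
\[ J_{s, \frak{a}, \vec{\varphi}}(A_1 \cup \dots \cup A_r) \leq \sum_{i_1, \dots, i_{2s} = 1}^{r} J_{s, \frak{a}, \vec{\varphi}}(A_{i_1}, \dots, A_{i_{2s}}). \]
Applying \eqref{eqz1} to each summand on the right and swapping the order of summation and product turns this into
\[ (d+2)^{2s} \prod_{k=1}^{2s} \Big( \sum_{i=1}^{r} J_{s, \frak{a}, \varphi_k}(A_i)^{1/(2s)} \Big). \]
Replacing each inner sum by $r$ times its supremum and noting that the geometric mean of the resulting $2s$ suprema is bounded by their overall maximum then delivers \eqref{doj}.

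For \eqref{doj3}, I plan to combine the sign-splitting idea from the proof of Proposition \ref{gvup} with the third inequality of Lemma \ref{reprove}. First I would partition $A = A^{(1)} \cup \dots \cup A^{(r)}$ with $r \leq d+2$, so that on each $A^{(i)}$ both $x \mapsto x$ and $x \mapsto \varphi_1(x)$ have constant sign; this is possible because $\{0\} \cup \mathcal{Z}_{\varphi_1}$ contains at most $d+1$ points, splitting $\mathbb{R}$ into at most $d+2$ sign-uniform intervals. Applying \eqref{eqz2} exactly as in the first paragraph, followed by a power-mean bound to dispatch the resulting $(2s)$-th power of a sum, reduces the task to proving, for each fixed $i$, an estimate of the shape $J_{s, \varphi_1}(A^{(i)}) \ll_{s,d} |A|^{2s - 2l} J_{l, \varphi_1}(A)$.

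To handle a single sign-uniform piece $A^{(i)}$, I would reuse the bijection $\sigma \colon A^{(i)} \to \mathbb{R}$ given by $\sigma(a) = \psi_{X}(\varrho_{X}(|a|, |\varphi_1(a)|))$ with $X = \{(|a|, |\varphi_1(a)|) : a \in A^{(i)}\}$, exactly as in the proof of Proposition \ref{gvup}. Since the signs of $a$ and of $\varphi_1(a)$ are both locked on $A^{(i)}$, the two multiplicative equations defining $J_{s, \varphi_1}(A^{(i)})$ are equivalent to their absolute-value counterparts, and $\sigma$ simultaneously linearises both, so $J_{s, \varphi_1}(A^{(i)}) = E_s(\sigma(A^{(i)}))$. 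Invoking the third inequality of Lemma \ref{reprove} gives $E_s(\sigma(A^{(i)})) \leq |A^{(i)}|^{2s - 2l} E_l(\sigma(A^{(i)}))$; undoing $\sigma$ and using $A^{(i)} \subseteq A$ completes the proof. The only subtlety worth double checking is that $\sigma$ really converts both multiplicative relations into additive ones simultaneously, but this is exactly the property of $\sigma$ established in the proof of Proposition \ref{gvup}, so no genuinely new difficulty arises.
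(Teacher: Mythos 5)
Your proposal is correct and follows essentially the same route as the paper: for \eqref{doj} the expansion over index tuples, an application of \eqref{eqz1}, factorisation into a product of sums and the supremum bound are exactly the paper's computation, and for \eqref{doj3} the paper likewise reduces to a sign-uniform piece at the cost of an $O_{s,d}(1)$ factor and then uses the map from Proposition \ref{gvup} to identify $J_{t,\frak{a},\varphi_1}$ with an additive energy $E_{t,\frak{a}}(Z)$, finishing with the third inequality of Lemma \ref{reprove}. Your extra care with the indicator bound for possibly overlapping unions and with the monotonicity $J_{l,\varphi_1}(A^{(i)}) \leq J_{l,\varphi_1}(A)$ is sound but does not change the argument.
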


\begin{proof}

As before, $\eqref{doj}$ follows from Proposition $\ref{gvup}$ in a straightforward manner since
\begin{align*}
 J_{s, \frak{a}, \vec{\varphi} }(A_1 \cup \dots \cup A_r)  
 & = \sum_{1 \leq  i_1, \dots, i_{2s} \leq r}J_{s, \frak{a}, \vec{\varphi} }(A_{i_1}, \dots, A_{i_{2s}}) \\
& \leq (d+2)^{2s} \sum_{1 \leq  i_1, \dots, i_{2s} \leq r} \prod_{j=1}^{2s}   J_{s, \frak{a}, \varphi_j}(A_{i_j})^{1/2s} \\
 & = (d+2)^{2s} \prod_{j=1}^{2s} \Big(\sum_{i=1}^{r} J_{s, \frak{a}, \varphi_j}(A_i)^{1/2s}\Big) \\
& \leq (d+2)^{2s} r^{2s} \sup_{1 \leq i \leq r} \sup_{1 \leq j \leq 2s} J_{s, \frak{a}, \varphi_j}(A_i).
\end{align*}
\par

We will now outline the proof of $\eqref{doj3}$. As in the proof of Proposition $\ref{gvup}$, we see that upon losing a factor of $O_{s,d}(1)$, it suffices to consider the case when $A \subseteq I$, where $I$ is some interval such that the functions $\varphi_1(x)$ and $x$ do not change signs as $x$ varies in $I$. As before, this allows us to construct a finite set $Z \subseteq (0, \infty)$ such that $|Z| = |A|$ and $J_{t, \frak{a}, \varphi_1}(A) = E_{t, \frak{a}}(Z)$ for each $1 \leq t \leq s$, whereupon, a straightforward application of Lemma $\ref{reprove}$ delivers the estimate
\[ J_{s, \frak{a}, \varphi_1}(A) = E_{s, \frak{a}}(Z) \leq |Z|^{2s - 2l} E_{l, \frak{a}}(Z) = |A|^{2s - 2l} J_{l, \frak{a}, \varphi_1}(A), \]
consequently finishing our proof of Lemma $\ref{wm}$. 
\end{proof}

It is worth noting that in the hypotheses of Proposition $\ref{gvup}$ and Lemma $\ref{wm}$, there are no lower bounds for $\deg \varphi_i$, whenceforth, we may even choose the polynomials $\varphi_{1}, \dots, \varphi_{2s}$ to be constant functions. In particular, if $\varphi_1(x) = \dots = \varphi_{2s}(x) = 1$ for every $x \in \mathbb{R}$, then we see that $J_{s, \frak{a}, \vec{\varphi}}(A) = M_{s, \frak{a}}(A)$ for every finite set $A \subseteq \mathbb{Q}$. This implies that Proposition $\ref{gvup}$ and Lemma $\ref{wm}$ hold true when the weighted mixed energies $J_{s, \frak{a}, \vec{\varphi}}(A)$ are replaced by weighted multiplicative energies $M_{s, \frak{a}}(A)$. 

%\begin{lemma} \label{wm}
%Let $\vec{u} \in \mathbb{Q}^{2s}$ and let $1 \leq l < s$ be natural numbers. 
%Let $A_1, \dots, A_{2s}$ be finite subsets of $\mathbb{R} \setminus \{0\}$. Then we have that
%\[ |\{ (a_1, \dots, a_{2s}) \in A_1 \times \dots \times A_{2s} \ | \ a_1 \dots a_s = a_{s+1} \dots a_{2s} \}| \ll_{s} M_{s}(A_1)^{1/2s} \dots M_{s}(A_{2s})^{1/2s}. \]
%Similarly, suppose that $A_1, \dots, A_{r}$ are finite subsets of $\mathbb{R}$. Then 
%\[ M_{s}(A_1 \cup \dots \cup A_{r}) \ll_{s} r^{2s-1} \sum_{i=1}^{r} M_{s}(A_i)  \leq r^{2s} \sup_{1 \leq i \leq r} M_{s}(A_i). \]
%Finally, if $1 \leq l <s$ are natural numbers and $A \subseteq (0 , \infty)$ is a finite set, then 
%\[ N_{s}(A; \vec{u}) \leq |A|^{2s - 2l } N_{l}(A; \vec{u}). \]
%\end{lemma}

%---------------------------------------------------------------------------------------------------------------------------
%---------------------------------------------------------------------------------------------------------------------------
%---------------------------------------------------------------------------------------------------------------------------
%---------------------------------------------------------------------------------------------------------------------------
%---------------------------------------------------------------------------------------------------------------------------
%---------------------------------------------------------------------------------------------------------------------------

\section{Chang's lemma for additive equations and query complexity}

Let $\varphi(x)$ be a polynomial in $\mathbb{Z}[x]$ of degree $d$, for some $d \in \mathbb{N}$. Given finite sets $A \subseteq \mathbb{N}$ and $V \subseteq \mathbb{Z}$ and some prime $p$, we write 
\[ A_{p, V} =  \{ a \in A \  | \ \nu_{p}(a) \in V \},  \]
where for any $n \in \mathbb{N}$, the quantity $\nu_{p}(n)$ denotes the largest exponent $m \in \mathbb{Z}$ such that $p^m$ divides $n$.  Moreover, we set $\nu_p(0) = \infty$. When $V = \{l\}$ for some $l \in \mathbb{Z}$, we write $A_{p,l} = A_{p,V}$, and we define $\nu_p(A) = \{ \nu_{p}(a) \ | \ a \in A\}.$ Furthermore, we will frequently use the following straightforward application of H\"{o}lder's inequality, that is, given natural numbers $r,s$ and bounded functions $f_1, \dots, f_r : [0,1] \to \mathbb{C}$, we have
\begin{equation} \label{hl3}
 \int_{[0,1)}  |\sum_{i=1}^{r} f_i(\alpha) |^{2s} d \alpha
 \leq r^{2s-1} \sum_{i=1}^{r} \int_{[0,1)}  |f_{i}(\alpha)|^{2s} d\alpha
 \leq r^{2s} \max_{1 \leq i \leq r} \int_{[0,1)} |f_i(\alpha)|^{2s} d\alpha .
\end{equation}
\par

Our main object of study in this section would be $E_{s, \frak{a}, \varphi}(A)$. We note that these weighted additive energies can be represented as moments of various types of exponential sums, and so, we define the function
\[ f_{\frak{a}, \varphi}(A; \alpha) =  \sum_{a \in A} \frak{a}(a) e(\alpha \varphi(a) ) \]
when $A$ is a non-empty set and we set $f_{\frak{a}, \varphi}(A; \alpha) = 0$ if $A$ is an empty set. By orthogonality, we see that 
\begin{equation} \label{orth}
 \int_{[0,1)} | f_{\frak{a}, \varphi}(A; \alpha) |^{2s}  d\alpha  
%    = \sum_{a_1, \dots, a_{2s} \in A} \frak{a}(a_1) \dots {\frak{a}(a_{2s})} \ \mathds{1}_{\varphi(a_1) + \dots + \varphi(a_s) = \varphi(a_{s+1}) + \dots + \varphi(a_{2s})} 
 = E_{s, \frak{a}, \varphi}(A) .
 \end{equation}
Moreover, note that 
\[ f_{\frak{a}, \varphi}(A; \alpha) =  \sum_{l \in \nu_p(A)} f_{\frak{a}, \varphi}(A_{p,l}; \alpha) ,\]
which then combines with $\eqref{hl3}$ to give us
\[   \int_{[0,1)} | f_{\frak{a}, \varphi}(A; \alpha) |^{2s}  d\alpha \leq |\nu_p(A)|^{2s-1} \sum_{l\in \nu_p(A)}  \int_{[0,1)} | f_{\frak{a}, \varphi}(A_{p, l}; \alpha)  |^{2s} d\alpha .
\]
The following lemma essentially allows us to upgrade the factor $|\nu_{p}(A)|^{2s-1}$ in the above inequality to a factor of  $O_{s,d}(|\nu_{p}(A)|^{s-1})$.

\begin{lemma} \label{chang}
Let $p$ be a prime number, let $A \subseteq \mathbb{N}$ be a finite set and let $\frak{a}: \mathbb{N} \to [0, \infty)$ be a function supported on $A$. Then 
\[  \Big( \int_{[0,1)} | f_{\frak{a}, \varphi}(A; \alpha) |^{2s}  d\alpha \Big)^{1/s}
 \leq (d^2+2)^{4} (2s)^2
 \sum_{n \in \nu_p(A)} \Big(  \int_{[0,1)} | f_{\frak{a}, \varphi}(A_{p, n}; \alpha)  |^{2s} d\alpha \Big)^{1/s} 
\]
\end{lemma}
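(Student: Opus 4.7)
The plan is to reinterpret the claim: raising both sides to the $s$-th power and writing $V = \nu_p(A)$, $f_n := f_{\mathfrak{a},\varphi}(A_{p,n};\cdot)$ and $f := \sum_{n \in V} f_n$, the inequality is equivalent to the decoupling-style bound
\[\|f\|_{2s}^{2s} \;\leq\; \bigl((d^2+2)^{4}(2s)^{2}\bigr)^{s} \Bigl(\sum_{n \in V}\|f_n\|_{2s}^{2}\Bigr)^{s}.\]
I would follow the strategy of Chang's original argument for the linear case $\varphi(x)=x$ in \cite[Proposition 8]{Ch2003}, and make the dependence on $d$ explicit.

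By orthogonality,
\[\|f\|_{2s}^{2s} \;=\; \sum_{\mathbf{n},\mathbf{m} \in V^s} T(\mathbf{n},\mathbf{m}),\]
where $T(\mathbf{n},\mathbf{m})$ is the $\mathfrak{a}$-weighted count of tuples $(a_1,\dots,a_s,b_1,\dots,b_s)$ with $a_i \in A_{p,n_i}$, $b_j \in A_{p,m_j}$ and $\sum_{i}\varphi(a_i) = \sum_{j}\varphi(b_j)$. The heart of the argument is a $p$-adic matching step. Writing $a = p^n u$ with $p \nmid u$ and expanding $\varphi(a) = \sum_{k=0}^{d} c_k p^{kn} u^k$, reducing the equation modulo successive powers of $p$ imposes algebraic constraints on the units $u_i, u_j$. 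A scale-by-scale pigeonhole then shows that, up to a combinatorial loss depending only on $s$ and $d$, only profiles $(\mathbf{n},\mathbf{m})$ with $\{n_i\}=\{m_j\}$ as multisets give a nontrivial $T(\mathbf{n},\mathbf{m})$. For such diagonal profiles, a H\"older step (a $\varphi$-twisted analogue of Lemma \ref{reprove}, whose proof carries over verbatim on replacing $e(\alpha a)$ by $e(\alpha\varphi(a))$) yields $T(\mathbf{n},\mathbf{n}) \leq \prod_i \|f_{n_i}\|_{2s}^{2}$, and summing this over $\mathbf{n} \in V^s$ produces the factor $\bigl(\sum_n\|f_n\|_{2s}^{2}\bigr)^{s}$ on the right-hand side.

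I expect the main obstacle to be the $p$-adic matching step for a polynomial $\varphi$, which I believe to be responsible for the $(d^2+2)^4$ loss. In the linear case, the $p$-adic scale of $\varphi(a)$ coincides with $\nu_p(a)$, and a single mod-$p$ reduction suffices. For a general polynomial $\varphi(x) = \sum_k c_k x^k$, however, the monomial $c_k x^k$ activates at scale $\nu_p(c_k) + kn$, so different monomials may dominate or collide at different scales as $n$ ranges over $V$. Unwinding these collisions without incurring any loss in $|V|$ requires tracking up to $d$ monomials across up to $d$ distinct scales and invoking the fact that a nonzero polynomial of degree at most $d$ has at most $d$ roots in $\mathbb{F}_p$. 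A further small subtlety arises when $\varphi(0)\neq 0$: the constant term cancels globally and one effectively analyses $\varphi(x)-\varphi(0)$ at the leading scale. Once this matching is arranged cleanly, the remaining ingredients---H\"older's inequality, pigeonholing, and the power-mean inequality---are routine.
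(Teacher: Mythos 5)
There is a genuine gap: the ``$p$-adic matching step'' at the heart of your plan is false, already in the linear case. Take $p=2$, $\varphi(x)=x$ and the solution $1+7=4+4$: the valuation profiles are $\{0,0\}$ on the left and $\{2,2\}$ on the right, which are not equal as multisets, yet $T(\mathbf{n},\mathbf{m})>0$ for this profile. Off-diagonal profiles genuinely contribute, and controlling them is precisely the content of the lemma: if you instead bound \emph{every} profile by H\"older you only recover $\bigl(\sum_n\|f_n\|_{2s}\bigr)^{2s}$, which is weaker than the claimed right-hand side by a factor of order $|\nu_p(A)|^{s}$ (this is just the triangle-inequality bound \eqref{hl3}). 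The true structural fact is much weaker than multiset equality: after restricting to a range of valuations on which a single monomial $\beta_j x^j$ of $\varphi$ dominates the $p$-adic valuation of $\varphi(a)$, any solution must contain two of the $2s$ variables (possibly on the \emph{same} side of the equation) with equal valuation, since otherwise the unique term of minimal valuation would force $\nu_p(0)<\infty$. The paper converts this weak fact into the stated bound not by a diagonal reduction but by an extraction-and-absorption argument: it bounds $\int|f(A_{p,U_i})|^{2s}$ by $(2s)^2\sum_{n\in U_i}\int|f(A_{p,U_i})|^{2s-2}|f(A_{p,n})|^2$, applies H\"older to split off $\bigl(\int|f(A_{p,U_i})|^{2s}\bigr)^{1-1/s}$, and absorbs that factor into the left-hand side; your diagonal-only summation $\sum_{\mathbf{n}}\prod_i\|f_{n_i}\|_{2s}^2=\bigl(\sum_n\|f_n\|_{2s}^2\bigr)^s$, while correct as an identity, does not account for the off-diagonal mass, so the proposal as written does not prove the lemma.

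A secondary inaccuracy: the $(d^2+2)^4$ loss does not arise from any matching of scales but from a preliminary decomposition of $\nu_p(A)$. After reducing to $\varphi(0)=0$ by translation invariance (your remark about $\varphi(0)\neq 0$ is handled this way), one cuts $\nu_p(A)$ at the at most $d^2$ critical points $(\nu_p(\beta_j)-\nu_p(\beta_i))/(i-j)$ into at most $d^2+2$ intervals, plus an exceptional set of at most $d^2$ valuations, so that on each interval one monomial of $\varphi$ strictly dominates the valuation; the $(d^2+2)^4(2s)^2$ constant then comes from applying \eqref{hl3} to this bounded decomposition together with the extraction step above. Repairing your argument essentially amounts to replacing the multiset-matching claim by this weaker repeated-valuation claim and then redoing the H\"older/absorption step, at which point you have reproduced the paper's proof.
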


\begin{proof}
For ease of notation, we will write $f(B; \alpha) = f_{\frak{a}, \varphi}(B; \alpha)$ for every $B \subseteq \mathbb{Z}$ and $\alpha \in \mathbb{R}$, thus suppressing the dependence on $\frak{a}$ and $\varphi$. 
We begin by restricting our analysis to the case when $\varphi(0) = 0$, since the additive equation $x_1 + \dots + x_s = x_{s+1} + \dots + x_{2s}$ is translation invariant. Thus, let $\varphi(x) = \sum_{i \in I} \beta_i x^i$ for some non-empty set $I \subseteq \{1,\dots, d\}$ and for some sequence $\{\beta_i\}_{i \in I}$ of non-zero integers. Moreover,  we define 
\[ X = \bigg\{ \frac{\nu_{p}(\beta_j) - \nu_p(\beta_i)}{i-j} \ : \ i , j \in I \ \text{and} \ i \neq j \bigg\}, \]
whereupon, we have $|X| \leq |I|^2 \leq d^2$. Denoting $r = |X|$, we write the elements of $X$ in increasing order as $x_1 < \dots < x_r$, and we decompose the set $\nu_p(A)$ as $\nu_p(A) = U_0 \cup \dots U_{r+1}$, where $U_i = (x_i, x_{i+1}) \cap \nu_p(A)$ for each $1 \leq i \leq r-1$ and $U_0 = (-\infty, x_1) \cap \nu_p(A)$ and $U_{r} = (x_{r}, \infty) \cap \nu_p(A)$ and $U_{r+1} = X \cap \nu_p(A)$. We note that the sets $U_0, \dots, U_{r+1}$ are pairwise disjoint and that $|U_{r+1}| \leq d^2$. 
\par

We begin by applying $\eqref{hl3}$ in order to deduce that
\begin{equation} \label{soc}
 \int_{[0,1)} | f (A; \alpha) |^{2s}  d\alpha \leq (r+2)^{2s} \max_{0 \leq i \leq r+1} \bigg\{  \int_{[0,1)} | f (A_{p, U_i}  ; \alpha) |^{2s} \bigg\} .
 \end{equation}
Note that if the set $U_{r+1}$ maximises the right hand side above, then we may apply $\eqref{hl3}$ again to deduce the desired claim. Thus, we may assume that $U_i$ maximises the right hand side above for some $0 \leq i \leq r$. We now claim that for any such $i$ there exists $j = j(i) \in I$ such that for every $a \in A_{p, U_i}$, we have $\nu_{p}( \varphi(a)) = \nu_p( \beta_j a^j)$. This arises from combining the fact that 
\[ \nu_p ( (p^m)^{i} \beta_i ) < \nu_p ( (p^m)^j \beta_j) \ \ \text{if and only if} \ \ m< ( \nu_{p}(\beta_j) - \nu_p(\beta_i) ) (i-j)^{-1},   \]
and that $m$ lies in the set $U_i$, which itself is contained in precisely one of the intervals $(-\infty, x_1), (x_1, x_2), \dots, (x_r, \infty)$. 
\par

Now suppose that $a_1, \dots, a_{2s} \in A_{p, U_i}$ satisfy 
\begin{equation} \label{nin}
\varphi(a_1) + \dots + \varphi(a_{s}) = \varphi(a_{s+1}) + \dots + \varphi(a_{2s}). 
\end{equation}
Our next claim is that there exist distinct $k_1, k_2 \in \{1, \dots, 2s\}$ such that $a_{k_1}, a_{k_2} \in A_{p,n}$ for some $n \in U_i$. If this was not so, then writing $k_0$ to be the distinct $k$ which minimises $\nu_{p}(a_k)$, we have that
\[ \nu_p(0) = \nu_p(\varphi(a_1) + \dots - \varphi(a_{2s})) = \nu_{p}( \beta_j(a_1^j + \dots - a_{2s}^j)) = \nu_{p}(\beta_j a_{k_0}^j) < \infty, \]
which delivers a contradiction.
% Now, suppose that we are counting the number of solutions $I$ to $\eqref{nin}$ with $a_1, a_2 \in A_{p,d}$ for some $d \in U_i$. Then we have that
%\[ I = 
Combining this claim with the orthogonality relation $\eqref{orth}$ and applying triangle inequality, we get that
\[   \int_{[0,1)} | f(A_{p,U_i} ; \alpha) |^{2s}  d\alpha  \leq (2s)^{2}   \sum_{n \in U_i} \int_{[0,1)} | f(A_{p,U_i} ; \alpha) |^{2s-2} | f(A_{p,n} ; \alpha) |^2 d\alpha .  \]
Employing H\"{o}lder's inequality now enables us to bound the right hand side by 
\[  (2s)^{2}   \sum_{n \in U_i}
\Big( \int_{[0,1)} | f(A_{p,U_i} ; \alpha) |^{2s} d\alpha \Big)^{1-1/s}
\Big(\int_{[0,1)} |f(A_{p,n} ; \alpha) |^{2s} d\alpha \Big)^{1/s},
\]
and subsequently, we have that
\[ \Big(\int_{[0,1)} | f(A_{p,U_i} ; \alpha) d\alpha  ) |^{2s}  \Big)^{1/s} \leq (2s)^{2}  \sum_{n \in U_i} \Big(\int_{[0,1)} |f(A_{p,n} ; \alpha) |^{2s} d\alpha \Big)^{1/s}.  \]
Substituting this into $\eqref{soc}$ finishes our proof.
\end{proof}

%We can now iterate Lemma $\ref{chang}$ to provide further bounds on moments of exponential sums by considering moments of their fibres.

 In order to present the next lemma, we record some further notation, and thus,
% and thus, given $k \in \mathbb{N}$, vectors $\vec{p} = (p_1, \dots, p_k) \in \mathbb{Z}^k$ with $p_1, \dots, p_k$ being primes and finite sets $A \subseteq \mathbb{N}$ and $V \subseteq \mathbb{Z}^k$, we define
%\[ A_{\vec{p}, V} = \{ a \in A \ | \  (\nu_{p_1}(a), \dots, \nu_{p_k}(a) ) \in V \}.    \]
%Furthermore, as before, when $V = \{ \vec{d}\}$ for some $\vec{d} \in \mathbb{Z}^k$, we write $A_{\vec{p}, \vec{d}} = A_{\vec{p}, V}$. Finally, we let $\nu_{\vec{p}}(A) = \{ (\nu_{p_1}(a), \dots, \nu_{p_k}(a)) \ | \ a \in A\}$.
%\par
%
given a finite set $A \subseteq \mathbb{Z}$, we define the \emph{query complexity} $q(A)$ to be the minimal $t \in \mathbb{N}$ such that there are functions $f_1, \dots, f_{t-1} : \mathbb{Z} \to \mathbb{P}$ and a fixed prime number $p_1$ satisfying the fact that the vectors $\{(\nu_{p_1}(a), \dots, \nu_{p_t}(a))\}_{a \in A}$ are pairwise distinct, where the prime numbers $p_2, \dots, p_t$ are defined recursively by setting $p_i = f_{i-1}(\nu_{p_{i-1}}(a))$, for each $2 \leq i \leq t$.

\begin{lemma} \label{ayay}
Let $s,t$ be natural numbers, let $A \subseteq \mathbb{N}$ be a finite set such that $q(A) = t$ and  let $\frak{a}: \mathbb{N} \to [0, \infty)$ be a function supported on $A$. Then we have that
\[  E_{s, \frak{a}, \varphi}(A) ^{1/s}
 \leq (d^2+2)^{4t} (2s)^{2t} \sum_{a \in A} \frak{a}(a)^2. 
\]
\end{lemma}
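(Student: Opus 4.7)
The plan is to iterate Lemma \ref{chang} precisely $t = q(A)$ times, each iteration splitting the current set according to the $p$-adic valuation of a prime singled out by the query-complexity structure. By the definition of $q(A) = t$, there exist a prime $p_1$ and functions $f_1, \dots, f_{t-1} : \mathbb{Z} \to \mathbb{P}$ such that, setting $p_i(a) = f_{i-1}(\nu_{p_{i-1}(a)}(a))$ recursively for $2 \leq i \leq t$, the vectors $(\nu_{p_1}(a), \nu_{p_2(a)}(a), \dots, \nu_{p_t(a)}(a))$ are pairwise distinct as $a$ ranges over $A$. The crucial observation is that for any fixed tuple $(n_1, \dots, n_k)$ with $k < t$, all elements of the refined subset
\[ A_{n_1, \dots, n_k} = \{ a \in A \ : \ \nu_{p_1}(a) = n_1, \ \nu_{p_2(a)}(a) = n_2, \dots, \nu_{p_k(a)}(a) = n_k\} \]
share a common next prime $f_k(n_k)$, so within any such subset Lemma \ref{chang} may be legitimately applied with a single, well-defined prime.

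Concretely, first I would apply Lemma \ref{chang} with $p = p_1$ to deduce
\[ E_{s,\frak{a},\varphi}(A)^{1/s} \leq (d^2+2)^4 (2s)^2 \sum_{n_1 \in \nu_{p_1}(A)} E_{s,\frak{a},\varphi}(A_{n_1})^{1/s}. \]
For each $n_1$, since the prime $f_1(n_1)$ is fixed on all of $A_{n_1}$, another application of Lemma \ref{chang} gives
\[ E_{s,\frak{a},\varphi}(A_{n_1})^{1/s} \leq (d^2+2)^4(2s)^2 \sum_{n_2} E_{s,\frak{a},\varphi}(A_{n_1,n_2})^{1/s}, \]
and iterating exactly $t$ times yields
\[ E_{s,\frak{a},\varphi}(A)^{1/s} \leq (d^2+2)^{4t}(2s)^{2t} \sum_{(n_1, \dots, n_t)} E_{s,\frak{a},\varphi}(A_{n_1, \dots, n_t})^{1/s}. \]

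By the defining property of $q(A) = t$, each $A_{n_1, \dots, n_t}$ contains at most one element; hence when $A_{n_1, \dots, n_t} = \{a\}$, the inner energy is simply $\frak{a}(a)^{2s}$, so its $1/s$-th power contributes $\frak{a}(a)^2$. Summing the resulting singleton contributions gives $\sum_{a \in A} \frak{a}(a)^2$, which together with the accumulated constant delivers the claimed bound. The only genuine subtlety in the argument is the bookkeeping required to verify that at each stage the prime used in Chang's lemma depends only on the coordinates $(n_1, \dots, n_k)$ already fixed, rather than on the individual element; but this is guaranteed by the recursive structure in the definition of query complexity, so the remainder of the argument reduces to routine iteration.
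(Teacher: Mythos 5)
Your proposal is correct and is essentially the paper's argument: the paper proves the lemma by induction on $t = q(A)$, applying Lemma \ref{chang} once and noting that each valuation fibre $A_{p_1,n}$ has query complexity at most $t-1$, which is exactly your $t$-fold iteration written in induction form. The bookkeeping point you flag — that the prime used at each stage depends only on the previously fixed valuations — is precisely what the inductive formulation in the paper encapsulates, so the two proofs coincide.
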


\begin{proof}
For the sake of exposition, we will write $E(X) = E_{s, \frak{a}, \varphi}(X)$ for any subset $X$ of $A$.
Our aim is to prove our lemma by induction on $t$, and so, we first consider the case when $t=1$. In this case, there must exist some prime $p$ such that $\nu_{p}(a)$ is distinct for each $a \in A$. Applying Lemma $\ref{chang}$ along with the orthogonality relation $\eqref{orth}$, we infer that
\[ E(A)^{1/s} \leq (d^2 + 2)^4 (2s)^2 \sum_{n \in \nu_p(A)} E(A_{p,n})^{1/s}. \]
Moreover, since for each $n \in \nu_p(A)$, the set $A_{p,n}$ may be written as $A_{p,n} = \{ a_n\}$ for some unique $a_n \in A$, we see that $E(A_{p,n}) = \frak{a}(a_n)^{2s}$. Substituting this in the above expression gives the desired bound when $t=1$.
\par

We now assume that $t >1$, whence, there exist functions $f_1, \dots, f_{t-1} : \mathbb{Z} \to \mathbb{P}$ and a fixed prime number $p_1$ satisfying the fact that the vectors $\{(\nu_{p_1}(a), \dots, \nu_{p_t}(a))\}_{a \in A}$ are pairwise distinct, where the prime numbers $p_2, \dots, p_t$ are defined recursively by setting $p_i = f_{i-1}(\nu_{p_{i-1}}(a))$, for each $2 \leq i \leq t$.  As before, upon applying an amalgamation of Lemma $\ref{chang}$ and $\eqref{orth}$, we see that
\[ E(A)^{1/s} \leq (d^2 + 2)^4 (2s)^2 \sum_{n \in \nu_{p_1}(A)} E(A_{p_1, n})^{1/s} . \]
Note that for each $n \in \nu_{p_1}(A)$, the set $A_{p_1, n}$ satisfies $q(A_{p_1, n}) \leq t-1$, and so, we may apply the inductive hypothesis for each set $A_{p_1,n}$ to get the inequality
\[ E(A)^{1/s} \leq (d^2 + 2)^{4t} (2s)^{2t} \sum_{n \in \nu_{p_1}(A)} \sum_{a \in A_{p_1, n} } \frak{a}(a)^2 \leq   (d^2 + 2)^{4t} (2s)^{2t}   \sum_{ a \in A} \frak{a}(a)^2.  \]
This finishes the inductive step, and subsequently, the proof of Lemma $\ref{ayay}$ as well.
%\par
% from combining Lemma $\ref{chang}$ with the orthogonality relation $\eqref{orth}$. We may then prove the general case via induction, and so, suppose that Lemma $\ref{ayay}$ holds for some $k \in \mathbb{N}$. Now, let $\vec{p} = (p_1, \dots, p_{k+1}) \in \mathbb{N}^{k+1}$ be a vector with $p_1, \dots, p_{k+1}$ being primes numbers, and let $\vec{p}' = (p_2, \dots, p_{k+1})$. Applying Lemma $\ref{chang}$ with $p=p_1$, we see that
%\[  E(A)^{1/s}  \leq (d^2+ 2)^4 (2s)^2 \sum_{n \in \nu_{p_1}(A)} E(A_{p_1, n})^{1/s} ,
%\]
%whence, we may apply the inductive hypothesis for each set $A_{p_1, n}$ to deduce that
%\begin{align*}
%E(A)^{1/s}  & \leq (d^2+2)^{4k+4} (2s)^{2k+2} \sum_{n \in \nu_{p_1}(A)} \sum_{ \vec{m} \in \nu_{\vec{p}'}( A_{p_1, n} )  } E(A_{p_1, n, \vec{p}', \vec{m}})^{1/s} \\
%& = (d^2+2)^{4k+4} (2s)^{2k+2} \sum_{ \vec{n} \in \nu_{\vec{p}}(A) } E(A_{\vec{p}, \vec{n}})^{1/s} .
%\end{align*}
%This finishes the inductive step, and subsequently, the proof of Lemma $\ref{ayay}$ as well.
\end{proof}

%---------------------------------------------------------------------------------------------------------------------------
%---------------------------------------------------------------------------------------------------------------------------
%---------------------------------------------------------------------------------------------------------------------------
%---------------------------------------------------------------------------------------------------------------------------
%---------------------------------------------------------------------------------------------------------------------------
%---------------------------------------------------------------------------------------------------------------------------

\section{Chang's lemma for multiplicative equations and an averaging argument}

Our main aim of this section is to prove a multiplicative analogue of Lemma $\ref{ayay}$, and this forms the content of the following result.

\begin{lemma} \label{tst}
Let $s, d$ be natural numbers, let $\varphi \in \mathbb{Z}[x]$ be a polynomial of degree $d$, let $A$ be a finite subset of $\mathbb{Z} \setminus ( \{0\} \cup \mathcal{Z}_{\varphi})$ and let $\frak{a}: \mathbb{R} \to [0, \infty)$ be a function. Then 
\[ M_{s, \frak{a}, \varphi}(A) \ll_{s, d}|A^{(s)}/A^{(s)}| (d+3)^{16 q(A) s} (2s)^{2 q(A)s} \Big( \sum_{a \in A} \frak{a}(a)^2\Big)^s . \]
\end{lemma}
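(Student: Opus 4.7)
The plan is to reduce Lemma \ref{tst} to a bound on the auxiliary mixed energy $J_{s, \frak{a}, \varphi}(A)$ via a multiplicative averaging inequality, and then to establish that bound by iterating a Chang-type decoupling inequality for $J_{s,\frak{a},\varphi}$ analogous to Lemma \ref{chang}.

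First, I would prove the averaging inequality
\[ M_{s, \frak{a}, \varphi}(A) \leq |A^{(s)}/A^{(s)}| \cdot J_{s, \frak{a}, \varphi}(A). \]
Setting
\[ G(x,y) = \sum_{a_1,\dots,a_s \in A} \frak{a}(a_1)\cdots\frak{a}(a_s) \mathds{1}_{a_1 \cdots a_s = x} \mathds{1}_{\varphi(a_1)\cdots\varphi(a_s) = y}, \]
one has $M_{s, \frak{a}, \varphi}(A) = \sum_{y}(\sum_{x} G(x,y))^2$ and $J_{s, \frak{a}, \varphi}(A) = \sum_{x,y} G(x,y)^2$. For each fixed $y$, the support $S_y = \{x : G(x,y) > 0\}$ is contained in $A^{(s)}$; picking any $x_0 \in S_y$ gives $S_y \subseteq x_0 \cdot (A^{(s)}/A^{(s)})$, so $|S_y| \leq |A^{(s)}/A^{(s)}|$. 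The inner Cauchy--Schwarz bound $(\sum_x G(x,y))^2 \leq |S_y| \sum_x G(x,y)^2$, summed over $y$, then delivers the averaging inequality.

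Next I would prove a multiplicative Chang-type decoupling: for every prime $p$,
\[ J_{s, \frak{a}, \varphi}(A)^{1/s} \leq (d+3)^{16} (2s)^{2} \sum_{n \in \nu_p(A)} J_{s, \frak{a}, \varphi}(A_{p,n})^{1/s}. \]
The strategy mirrors Lemma \ref{chang}: partition $\nu_p(A)$ into $O(d^2)$ intervals $U_0, \dots, U_{r+1}$ determined by the points at which the $p$-adic valuations of distinct monomials of $\varphi$ agree, so that on each $U_i$ the valuation $\nu_p(\varphi(a))$ equals $\nu_p(\beta_j a^j)$ for a single monomial $\beta_j x^j$ of $\varphi$. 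On such an interval both defining equations of $J$, namely $\prod a_i = \prod a_{i+s}$ and $\prod \varphi(a_i) = \prod \varphi(a_{i+s})$, become linear constraints on the $\nu_p(a_k)$ under $\nu_p$, and as in the proof of Lemma \ref{chang} any valid solution whose entries lie in $A_{p, U_i}$ must contain two distinct indices $k_1, k_2 \in \{1,\dots,2s\}$ with $\nu_p(a_{k_1}) = \nu_p(a_{k_2})$, else the dominant-monomial argument forces a contradiction. Realising $J_{s, \frak{a}, \varphi}(A)$ as a suitable double integral of moments of exponential sums, so as to encode both defining equations simultaneously, and applying H\"older and the triangle inequality exactly as in the proof of Lemma \ref{chang} then yields the decoupling.

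Iterating the above decoupling along the sequence of primes provided by the definition of $q(A)$, via the induction of Lemma \ref{ayay}, gives
\[ J_{s, \frak{a}, \varphi}(A) \ll_{s, d} (d+3)^{16 q(A) s} (2s)^{2 q(A) s} \Big( \sum_{a \in A} \frak{a}(a)^2 \Big)^s, \]
where the base case consists of singletons $\{a\}$ for which $J_{s,\frak{a},\varphi}(\{a\}) = \frak{a}(a)^{2s}$; combining with the averaging inequality proves Lemma \ref{tst}. The principal obstacle is the Chang-type decoupling in the second step: unlike the additive case one must handle both constraints $\prod a_i = \prod a_{i+s}$ and $\prod \varphi(a_i) = \prod \varphi(a_{i+s})$ simultaneously under a single interval partition of $\nu_p(A)$, which accounts for the worse constant $(d+3)^{16}$ in place of $(d^2+2)^{4}$. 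The hypothesis $A \subseteq \mathbb{Z} \setminus (\{0\} \cup \mathcal{Z}_{\varphi})$ is essential here, ensuring that all relevant $p$-adic valuations $\nu_p(a)$ and $\nu_p(\varphi(a))$ are finite so that the dominant-monomial comparison applies.
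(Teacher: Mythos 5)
Your first step (the averaging inequality $M_{s,\frak{a},\varphi}(A) \leq |A^{(s)}/A^{(s)}|\, J_{s,\frak{a},\varphi}(A)$ via fibrewise Cauchy--Schwarz on $G(x,y)$) is correct, and is in fact a cleaner route than the paper's Lemma \ref{trut}, which obtains the same reduction through Young's convolution inequality after passing to logarithms on sign-definite pieces; your version avoids the sign issues entirely. The final iteration along the primes witnessing $q(A)$ is also exactly the paper's scheme. The problem lies in your second step, the multiplicative Chang-type decoupling, whose justification as described does not work.

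In the multiplicative setting, the ``single dominant monomial'' reduction gives no contradiction and hence no forced collision of valuations. If on $A_{p,U_i}$ one has $\nu_p(\varphi(a)) = \nu_p(\beta_j a^j)$ for one fixed $j$, then applying $\nu_p$ to $\varphi(a_1)\cdots\varphi(a_s) = \varphi(a_{s+1})\cdots\varphi(a_{2s})$ yields only $j\sum_{l\leq s}\nu_p(a_l) = j\sum_{l>s}\nu_p(a_l)$, which is either vacuous ($j=0$) or already implied by the first equation $a_1\cdots a_s = a_{s+1}\cdots a_{2s}$; two identical linear constraints are easily satisfied with pairwise distinct valuations (e.g. $0+3=1+2$), so the argument of Lemma \ref{chang} does not transfer. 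Indeed the decoupling you state is simply false for monomial $\varphi$: with $\varphi(x)=x$, $J_{s,\frak{a},\varphi}(A)=M_{s,\frak{a}}(A)$, and taking $A=\{2,4,\dots,2^N\}$, $\frak{a}=\mathds{1}_A$, $p=2$, every $A_{2,n}$ is a singleton, so your inequality would force $J_{s,\frak{a},\varphi}(A)\ll_{s,d} N^{s}$, whereas $J_{s,\frak{a},\varphi}(A)\gg_s N^{2s-1}$. The paper's Lemma \ref{ob2} avoids this by working with the \emph{two} monomials $j_1,j_2$ of smallest $p$-adic weight on $U_i$, expanding the difference $\varphi(a_1)\cdots\varphi(a_s)-\varphi(a_{s+1})\cdots\varphi(a_{2s})$, cancelling the leading term using $a_1\cdots a_s=a_{s+1}\cdots a_{2s}$, and then extracting the extreme valuation from the second-order term $\sum_{l\leq s}(a_l^{j_2-j_1}-a_{l+s}^{j_2-j_1})$; this crucially needs $\varphi$ to have at least two monomials after removing the power of $x$ dividing it, i.e. effectively the hypothesis $\varphi(0)\neq 0$ appearing in Lemma \ref{ob2} (and in force wherever Lemma \ref{tst} is applied), which your mechanism never uses and therefore cannot distinguish from the monomial counterexample. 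Separately, the H\"older extraction after the collision and your proposed ``double integral of exponential sums'' need the passage to an additive model, which the paper carries out via Proposition \ref{gvup} and \eqref{eqz2} only after restricting to intervals where $x$ and $\varphi(x)$ have fixed signs; this restriction is missing from your outline but is a lesser issue than the collision claim itself.
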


We begin this endeavour  by proving an analogue of Lemma $\ref{ayay}$ for $J_{s, \frak{a}, \varphi}(A)$.
%Thus, for any $\varphi \in \mathbb{Z}[x]$ with $\varphi(0) \neq 0$, we recall the definition
%\[ J_{s, \frak{a}, \varphi}(A) = \sum_{a_1, \dots, a_{2s} \in A} \frak{a}(a_1) \dots \frak{a}(a_{2s}) \mathds{1}_{a_1 \dots a_s = a_{s+1} \dots a_{2s}} \mathds{1}_{\varphi(a_1) \dots \varphi(a_{s}) = \varphi(a_{s+1})\dots \varphi(a_{2s})} . \]

\begin{lemma} \label{ob2}
Let $p$ be a prime number, let $\varphi$ be a polynomial of degree $d$ such that $\varphi(0) \neq 0$,  let $\mathcal{I} \subseteq \mathbb{R}$ be some open interval such that $xy >0$ and $\varphi(x) \varphi(y) >0$ for every $x,y \in \mathcal{I}$, let $A \subseteq \mathcal{I}$ be a finite set and let $\frak{a}: \mathbb{N} \to [0, \infty)$ be a function. Then we have
\[  J_{s, \frak{a}, \varphi}(A)^{1/s}
 \leq (d+3)^{16} (2s)^2
 \sum_{n \in \nu_p(A)} J_{s, \frak{a}, \varphi}(A_{p,n})^{1/s} 
\]
\end{lemma}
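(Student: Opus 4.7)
My plan is to mimic the proof of Lemma~\ref{chang}, adapted to the multiplicative quantity $J_{s, \frak{a}, \varphi}$; the essential new ingredient is a two-level $p$-adic partition that exposes enough polynomial structure to run a Chang-type Hölder argument. Following the reduction in Proposition~\ref{gvup}, I would first dilate by a natural number so that $A \subseteq \mathbb{N}$ and $\varphi \in \mathbb{Z}[x]$; the interval hypothesis together with $\varphi(0) \neq 0$ ensures $a, \varphi(a) > 0$ on $A$. Writing $\varphi(x) = \sum_{l \in I} \beta_l x^l$ with $0 \in I$, the first-level critical values $\{(\nu_p(\beta_l) - \nu_p(\beta_m))/(m-l) : l \neq m \in I\}$, of cardinality at most $(d+1)^2$, partition $\nu_p(A)$ into open intervals $U_i$ on each of which some unique $j(i) \in I$ satisfies $\nu_p(\varphi(a)) = \nu_p(\beta_{j(i)}) + j(i)\nu_p(a)$, allowing one to factor $\varphi(a) = \beta_{j(i)} a^{j(i)}(1 + \rho_i(a))$ with $\nu_p(\rho_i(a)) \geq 1$.

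Within each $U_i$ I would perform a second-level partition, using the critical values of the Laurent polynomial $\rho_i(a) = \sum_{l \in I \setminus \{j(i)\}} (\beta_l/\beta_{j(i)})\, a^{l-j(i)}$, to obtain open sub-intervals $V_{i,k}$ on each of which $\rho_i$ is dominated by a single term $(\beta_{l_0}/\beta_{j(i)})\, a^m$ with $m = l_0 - j(i) \neq 0$. The crux is the following key claim: if $(a_1, \ldots, a_{2s}) \in A_{p, V_{i,k}}^{2s}$ satisfies both $\prod_{l \leq s} a_l = \prod_{l \leq s} a_{s+l}$ and $\prod_{l \leq s} \varphi(a_l) = \prod_{l \leq s} \varphi(a_{s+l})$, then two of the $\nu_p(a_l)$ must coincide. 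To prove this, assume for contradiction that all the $\nu_p(a_l)$ are distinct. Using the first equation, the second reduces to $\prod(1 + \rho_i(a_l)) = \prod(1 + \rho_i(a_{s+l}))$; expanding both sides in elementary symmetric functions $e_j$ gives $\sum_{j \geq 1}(e_j - e_j') = 0$. Since $\rho_i(a) \sim c\, a^m$ with $m \neq 0$ on $V_{i,k}$, the $\nu_p(\rho_i(a_l))$ are also distinct, so the leading term $e_1 - e_1' = \sum_l \rho_i(a_l) - \sum_l \rho_i(a_{s+l})$ has $p$-adic valuation exactly $\min_l \nu_p(\rho_i(a_l))$, which is strictly smaller than $\nu_p(e_j - e_j')$ for any $j \geq 2$ (bounded below by the sum of the two smallest $\nu_p(\rho_i(a_l))$), yielding the desired contradiction.

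To conclude, I would introduce $f(\alpha) = \sum_{a \in A} \frak{a}(a) e(\alpha \log(a \varphi(a)^P))$ for a sufficiently large prime $P$; a variant of Lemma~\ref{reprove} then gives $J_{s, \frak{a}, \varphi}(A) = \lim_{R \to \infty} R^{-1} \int_0^R |f(\alpha)|^{2s}\, d\alpha$, the role of $P$ being to fold the two multiplicative equations into a single additive relation on logarithms. Splitting $f$ along the two-level partition and applying $\eqref{hl3}$ reduces matters to estimating the interior contribution from each $V_{i,k}$, on which the key claim combined with Hölder's inequality (exactly as in Lemma~\ref{chang}) furnishes $(\int |f_{V_{i,k}}|^{2s})^{1/s} \leq (2s)^2 \sum_{n \in V_{i,k}}(\int |f_n|^{2s})^{1/s}$; the finite singleton contributions coming from critical values are absorbed by the triangle inequality. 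Tracking constants, with the total number of partition pieces bounded by $O((d+1)^4)$, then yields the stated inequality with constant $(d+3)^{16}(2s)^2$. The main obstacle is the key claim, which relies crucially on both levels of partitioning: the first isolates the dominant monomial of $\varphi(a)$, and the second that of the perturbation $\rho_i(a)$, without which the $\nu_p(\rho_i(a_l))$ need not be distinct for distinct $\nu_p(a_l)$ and the $p$-adic cancellation collapses.
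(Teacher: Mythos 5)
Your proposal is correct and follows essentially the same route as the paper: the same partition of $\nu_p(A)$ at the critical values where the $p$-adic ordering of the monomials $\beta_l a^l$ changes, the same key claim that any solution of the simultaneous equations $a_1\cdots a_s=a_{s+1}\cdots a_{2s}$, $\varphi(a_1)\cdots\varphi(a_s)=\varphi(a_{s+1})\cdots\varphi(a_{2s})$ inside a piece must have two variables with equal $p$-adic valuation, and the same Chang-type H\"older iteration over the sets $A_{p,n}$. The minor differences do not change the substance: your second-level partition is redundant (the critical values of $\rho_i$ already lie in the first-level set $X$, so each $U_i$ needs no further subdivision); your symmetric-function/ultrametric proof of the key claim is a slightly more careful rendering of the paper's two-leading-terms computation with $T_1,T_2$; and your mean-value representation with frequencies $\log\bigl(|a|\,|\varphi(a)|^P\bigr)$ for large $P$ simply inlines the folding trick that the paper delegates to Proposition \ref{gvup} and Lemma \ref{wm} (whose proofs use exactly this encoding), with the constant bookkeeping comfortably fitting inside $(d+3)^{16}(2s)^2$.
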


\begin{proof}
For ease of exposition, we write $J(A) = J_{s, \frak{a}, \varphi}(A)$, thus suppressing the dependence on $s, \frak{a}, \varphi$. Moreover, we import various notation from the proof of Lemma $\ref{chang}$, and so, suppose that $\varphi(x) = \sum_{i \in I} \beta_i x^{i}$ for some $\{0,d\} \subseteq I \subseteq \{0,1,\dots, d\}$ and for some sequence $\{ \beta_i\}_{i \in I}$ of non-zero integers. We define the elements $x_1 < \dots < x_r$ of the set $X$, and the sets $U_0, \dots, U_{r+1}$ precisely as in the proof of Lemma $\ref{chang}$. The only difference is that in this case, we will have that $|U_{r+1}| \leq r = |X| \leq |I|^2 = (d+1)^2$.
\par

Our first claim is that for any fixed $0 \leq i \leq r$, whenever $a_1, \dots, a_{2s} \in A_{p, U_i}$ satisfy
\[ a_1 \dots a_s = a_{s+1} \dots a_{2s} \ \ \text{and} \ \ \varphi(a_1) \dots \varphi(a_s) = \varphi(a_{s+1}) \dots \varphi(a_{2s}), \]
then there exist distinct $k_1, k_2 \in \{1, \dots, 2s\}$ such that $\nu_p(a_{k_1}) = \nu_p(a_{k_2})$. We prove this by contradiction, and so, suppose $a_1, \dots, a_{2s} \in A_{p, U_i}$ satisfy the above system of equations and let $k$ and $k'$ be the unique elements in $\{1, \dots, 2s\}$ for which $\nu_p(a_k)$ is minimal and $\nu_p(a_{k'})$ is maximal. Moreover, since $a_1, \dots, a_{2s} \in A_{p,U_i}$, there exist distinct $j_1, j_2 \in I$ such that 
\[ \nu_{p}(\beta_{j_1} a_l^{j_1}) < \nu_p(\beta_{j_2} a_l^{j_2} ) < \nu_p(\beta_j a_l^{j}) \]
for every $j \in I \setminus \{j_1, j_2\}$ and for every $1 \leq l \leq 2s$. This implies that
\[ \nu_p(0) = \nu_p( \varphi(a_1) \dots \varphi(a_s) - \varphi(a_{s+1}) \dots \varphi(a_{2s})  ) =  \nu_p( T_1 - T_2 ),              \]
where 
\[ T_1 = (\beta_{j_1}^s\prod_{l=1}^{s}  a_l^{j_1} )(1 + \beta_{j_2} \beta_{j_1}^{-1} \sum_{l=1}^{s} a_l^{j_2- j_1}  )  \ \text{and} \ T_2 = (\beta_{j_1}^s\prod_{l=s+1}^{2s}  a_l^{j_1} )(1 + \beta_{j_2} \beta_{j_1}^{-1} \sum_{l=s+1}^{2s} a_l^{j_2- j_1}  ). \]
In the case when $j_2 > j_1$, utilising the fact that $a_1 \dots a_s = a_{s+1} \dots a_{2s}$, we get
\[ \nu_p(T_1 - T_2) = \nu_p( \beta_{j_2} \beta_{j_1}^{s-1} (a_1\dots a_s)^{j_1}  \sum_{l=1}^{s} (a_l^{j_2 - j_1} - a_{l+s}^{j_2 - j_1}) )  = \nu_p(  \beta_{j_2} \beta_{j_1}^{s-1} (a_1\dots a_s)^{j_1} a_{k}^{j_2 - j_1}) < \infty, \]
which implies that $\nu_p(0) < \infty$, thus delivering the desired contradiction. Similarly if $j_1 > j_2$, then we get that
\[ \nu_p(T_1 - T_2) = \nu_p( \beta_{j_2} \beta_{j_1}^{s-1} (a_1\dots a_s)^{j_1}  \sum_{l=1}^{s} (a_l^{j_2 - j_1} - a_{l+s}^{j_2 - j_1}) )  = \nu_p(  \beta_{j_2} \beta_{j_1}^{s-1} (a_1\dots a_s)^{j_1} a_{k'}^{j_2 - j_1}) < \infty, \]
which subsequently implies that $\nu_p(0) < \infty$ as well,  providing the required contradiction.
\par

%(d+2)^2 (r+2)^2 (d+2)^2 r^2 <= (d+2)^4 (r+2)^4 <= (d+2)^4 (d+3)^8 
With this claim in hand, we now use $\eqref{doj}$ to deduce that 
\[ J_{s, \frak{a}, \varphi}(A) \leq (d+2)^{2s} (r+2)^{2s} \max_{0 \leq i \leq  r+1} J_{s, \frak{a}, \varphi}(A_{p,U_i}).  \]
If the maximum is attained by the term corresponding to the set $A_{p, U_{r+1}}$, we may then combine a second application of $\eqref{doj}$ along with the facts that $A_{p, U_{r+1}} = \cup_{n \in U_{r+1}}A_{p,n}$ and $|U_{r+1}| \leq r \leq (d+1)^2$ to deliver the desired bound. Thus, it suffices to consider the case when the maximum in the preceding expression is attained for some fixed $i \in \{0,1,\dots, r\}$. In this case, we may use our aforementioned claim to note that
\[ J_{s, \frak{a}, \varphi}(A_{p,U_i}) \leq  \sum_{n \in U_i} (2s)^{2} 
\max J_{s, \frak{a}, \varphi}(B_1, B_2, \dots, B_{2s}),\]
where the maximum is taken over all choices of sets $B_1, \dots, B_{2s}$ such that precisely two of these sets are $A_{p,n}$ and the rest are $A_{p, U_i}$. We may now apply $\eqref{eqz2}$ to deduce that
\[   J_{s, \frak{a}, \varphi}(A_{p,U_i}) \leq  (2s)^2 \sum_{n \in U_i}    J_{s, \frak{a}, \varphi}(A_{p,U_i})^{1- 1/s} J_{s, \frak{a}, \varphi}(A_{p,n})^{1/s} . \]
Combining this with the preceding discussion delivers the bound
\[ J_{s, \frak{a}, \varphi}(A)^{1/s}  \leq (d+2)^2(r+2)^{2} (2s)^2 \sum_{n \in \nu_p(A)} J_{s, \frak{a}, \varphi}(A_{p,n})^{1/s} ,\]
which, in turn, combines with the fact that $r \leq (d+1)^2$ give us the desired result.
\end{proof}

%As with the proof of Lemma $\ref{ayay}$, we can iterate the above lemma to prove the following result.
%
%\begin{lemma} \label{mymy}
%Let $k$ be a natural number, let $A \subseteq \mathbb{N} \setminus (\{0\} \cup \mathcal{Z}_{\varphi})$ be a finite set, let $\vec{p} \in \mathbb{N}^k$ be a vector with $p_1, \dots, p_k$ being prime numbers and let $\frak{a}: \mathbb{N} \to [0, \infty)$ be a function supported on $A$. Then we have that
%\[  J_{s, \frak{a}, \varphi}(A) ^{1/s}
% \leq (d+3)^{8k} (2s)^{2k}
% \sum_{\vec{n} \in \nu_{\vec{p}}(A)} J_{s, \frak{a}, \varphi}(A_{p,\vec{n}})^{1/s} 
%\]
%\end{lemma}

As in \S4, the above lemma may be iterated to furnish the following result. 

\begin{lemma} \label{ob2}
Let $\varphi$ be a polynomial of degree $d$ such that $\varphi(0) \neq 0$, let $\mathcal{I} \subseteq \mathbb{R}$ be some open interval such that $xy >0$ and $\varphi(x) \varphi(y) >0$ for every $x,y \in \mathcal{I}$, let $A \subseteq \mathcal{I}$ be a finite set such that $q(A) = t$ and let $\frak{a}: \mathbb{N} \to [0, \infty)$ be a function. Then we have
\[  J_{s, \frak{a}, \varphi}(A)^{1/s}
 \leq (d+3)^{16t} (2s)^{2t}
\sum_{a \in A} \frak{a}(a)^2 
\]
\end{lemma}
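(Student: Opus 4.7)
The plan is to mirror the proof of Lemma \ref{ayay}, replacing the role of the additive Chang-type estimate (Lemma \ref{chang}) by its multiplicative counterpart (the preceding single-step Lemma in \S5). Concretely, I would induct on $t = q(A)$. Throughout the induction the interval hypothesis on $\mathcal{I}$ is inherited for free, since any subset of $A$ still lies in $\mathcal{I}$ and hence still satisfies the required sign conditions needed to invoke the single-step Chang-type inequality.

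For the base case $t=1$, by definition of query complexity there exists a prime $p$ such that the integers $\nu_p(a)$ are pairwise distinct as $a$ ranges over $A$. Applying the previous lemma with this choice of $p$ gives
\[ J_{s, \frak{a}, \varphi}(A)^{1/s} \leq (d+3)^{16} (2s)^2 \sum_{n \in \nu_p(A)} J_{s, \frak{a}, \varphi}(A_{p,n})^{1/s}. \]
For each $n \in \nu_p(A)$ the set $A_{p,n}$ is a singleton $\{a_n\}$, so by unpacking the definition of $J_{s,\frak{a},\varphi}$ we obtain $J_{s, \frak{a}, \varphi}(\{a_n\}) = \frak{a}(a_n)^{2s}$, and hence $J_{s, \frak{a}, \varphi}(A_{p,n})^{1/s} = \frak{a}(a_n)^2$. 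Summing these over $n$ yields the desired inequality when $t=1$.

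For the inductive step, assume the result holds at level $t-1$ and let $A$ satisfy $q(A) = t$. By definition there is a fixed prime $p_1$ and functions $f_1, \ldots, f_{t-1}$ witnessing the query complexity of $A$. Applying the previous lemma with $p = p_1$ gives
\[ J_{s, \frak{a}, \varphi}(A)^{1/s} \leq (d+3)^{16} (2s)^2 \sum_{n \in \nu_{p_1}(A)} J_{s, \frak{a}, \varphi}(A_{p_1, n})^{1/s}. \]
The key observation is that for each $n \in \nu_{p_1}(A)$ the restricted set $A_{p_1, n}$ has query complexity at most $t-1$, because the same functions $f_2, \ldots, f_{t-1}$ continue to witness distinctness of the remaining coordinates on the smaller set. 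The inductive hypothesis then yields
\[ J_{s, \frak{a}, \varphi}(A_{p_1, n})^{1/s} \leq (d+3)^{16(t-1)} (2s)^{2(t-1)} \sum_{a \in A_{p_1, n}} \frak{a}(a)^2, \]
and summing over $n \in \nu_{p_1}(A)$ using the disjoint decomposition $A = \bigsqcup_{n} A_{p_1, n}$ gives the target bound at level $t$.

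There is no serious obstacle beyond bookkeeping: the induction is formally identical to that of Lemma \ref{ayay}, with the role of Lemma \ref{chang} played by the multiplicative variant established earlier in this section. The only point requiring a brief sanity check is that the sign/interval hypothesis on $\mathcal{I}$ (which permits the use of \eqref{eqz2} in the single-step lemma) is preserved when we pass from $A$ to the subsets $A_{p_1,n}$; this is automatic since $A_{p_1,n} \subseteq A \subseteq \mathcal{I}$, and neither the hypothesis $\varphi(0) \neq 0$ nor the exclusion of zeros of $\varphi$ is disturbed by restricting to level sets of $\nu_{p_1}$.
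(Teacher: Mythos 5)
Your proposal is correct and matches the paper's intent exactly: the paper omits the argument, stating only that the single-step lemma ``may be iterated'' as in the proof of Lemma \ref{ayay}, and your induction on $t = q(A)$ (base case via singletons giving $\frak{a}(a_n)^{2s}$, inductive step via $q(A_{p_1,n}) \leq t-1$) is precisely that iteration with the constants multiplying correctly to $(d+3)^{16t}(2s)^{2t}$. Your sanity check that the sign/interval hypotheses pass to the subsets $A_{p_1,n}$ is the right (and only) point needing verification beyond the additive case.
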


We will now combine this with an averaging argument to deliver bounds for $M_{s, \frak{a}, \varphi}(A)$, and in this endeavour, we first prove a more general lemma that allows us to bound the number of solutions to a system of equations by inserting further auxiliary equations. Thus, given natural numbers $d_1, d_2$ and functions $f : \mathbb{R} \to \mathbb{R}^{d_1}$ and $g : \mathbb{R} \to \mathbb{R}^{d_2}$ and $\frak{a} : \mathbb{R} \to [0, \infty)$, we let
\[ E_{s, \frak{a}}(A;f,g) = \sum_{a_1, \dots, a_{2s} \in A} \frak{a}(a_1) \dots \frak{a}(a_{2s}) \mathds{1}_{f(a_1) + \dots - f(a_{2s}) = 0} \mathds{1}_{g(a_1) + \dots - g(a_{2s}) = 0}. \]
In particular, this counts the number of solutions to the system 
\[ \sum_{i=1}^{s}( f(x_i) - f(x_{i+s}) )= \sum_{i=1}^{s} ( g(x_{i}) - g(x_{i+s}) ) = 0\]
where each solution $(x_1, \dots, x_{2s}) \in A^{2s}$ is being counted with weights $\frak{a}(x_1) \dots \frak{a}(x_{2s})$. 
Similarly, we define 
\[ E_{s, \frak{a}}(A;g) = \sum_{a_1, \dots, a_{2s} \in A} \frak{a}(a_1) \dots \frak{a}(a_{2s})  \mathds{1}_{g(a_1) + \dots - g(a_{2s}) = 0}. \]

\begin{lemma} \label{trut} 
Let $s, d_1, d_2$ be natural numbers and let $f : \mathbb{R} \to \mathbb{R}^{d_1}$ and $g : \mathbb{R} \to \mathbb{R}^{d_2}$ and $\frak{a} : \mathbb{R} \to [0, \infty)$ be functions. Then for any finite set $A$ of real numbers, we have that 
\[  E_{s, \frak{a}}(A; g) \leq |s f(A) - sf(A)| E_{s, \frak{a}}(A;f,g) . \]
\end{lemma}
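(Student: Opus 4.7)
The plan is to decompose $E_{s, \frak{a}}(A; g)$ according to the value of the auxiliary quantity $\sum_{i=1}^s (f(x_i) - f(x_{i+s}))$, and then to show that among all such dyadic ``levels'' the one at zero carries the maximum mass. First I would note that for any solution $(x_1, \dots, x_{2s}) \in A^{2s}$ to $\sum_i (g(x_i) - g(x_{i+s})) = 0$, the vector $v := \sum_i (f(x_i) - f(x_{i+s}))$ automatically lies in the difference set $sf(A) - sf(A)$. Hence, writing
\[
N(v) = \sum_{a_1, \dots, a_{2s} \in A} \frak{a}(a_1) \cdots \frak{a}(a_{2s}) \mathds{1}_{\sum_i (g(a_i) - g(a_{i+s})) = 0} \mathds{1}_{\sum_i (f(a_i) - f(a_{i+s})) = v},
\]
we immediately obtain the decomposition $E_{s, \frak{a}}(A; g) = \sum_{v \in sf(A) - sf(A)} N(v)$, and also the identity $N(0) = E_{s, \frak{a}}(A; f, g)$.

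The main step is to verify that $N(v) \leq N(0)$ for every $v$. To this end, I would introduce the weighted representation function
\[
T(u, w) = \sum_{x_1, \dots, x_s \in A} \frak{a}(x_1) \cdots \frak{a}(x_s) \mathds{1}_{f(x_1) + \dots + f(x_s) = u} \mathds{1}_{g(x_1) + \dots + g(x_s) = w},
\]
and rewrite $N(v)$ by splitting the $2s$-tuple into its first and second halves. This yields the convolution-type expression
\[
N(v) = \sum_{w} \sum_{u} T(u+v, w)\, T(u, w).
\]
A single application of the Cauchy--Schwarz inequality in the variables $(u, w)$, combined with the shift-invariance $\sum_{u, w} T(u+v, w)^2 = \sum_{u, w} T(u, w)^2$, then shows that $N(v) \leq \sum_{u, w} T(u, w)^2 = N(0)$, as desired.

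Combining the two steps gives
\[
E_{s, \frak{a}}(A; g) = \sum_{v \in sf(A) - sf(A)} N(v) \leq |sf(A) - sf(A)| \cdot N(0) = |sf(A) - sf(A)| \cdot E_{s, \frak{a}}(A; f, g),
\]
which is the claimed inequality. There is essentially no obstacle here beyond correctly setting up the representation function $T$; the inequality $N(v) \leq N(0)$ is the standard ``maximum of an autocorrelation at the origin'' phenomenon, and the reduction to it is purely combinatorial. One small subtlety to keep in mind is that $f$ and $g$ take values in $\mathbb{R}^{d_1}$ and $\mathbb{R}^{d_2}$ respectively rather than in finite sets, but since we only sum over finitely many $u$ and $w$ that actually arise from $A$, all the sums remain finite and the Cauchy--Schwarz step is unaffected.
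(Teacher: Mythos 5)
Your proof is correct and follows essentially the same route as the paper: the decomposition of $E_{s,\frak{a}}(A;g)$ over the fibres $v \in sf(A)-sf(A)$ is identical, and your Cauchy--Schwarz bound $N(v)\leq N(0)$ via the representation function $T$ is exactly the paper's application of Young's convolution inequality to the $s$-fold convolutions, just written out in elementary terms. No gaps.
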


We will now show that Lemmata $\ref{trut}$ and $\ref{ob2}$ imply Lemma $\ref{tst}$.  

\begin{proof}[Proof of Lemma $\ref{tst}$]
We first partition the set $A$ into finite sets $A_1, \dots, A_r$, for some $r \ll_{d} 1$, such that for each $1 \leq j \leq r$, the set $A_j$ lies in some interval $I_j$ for which we have $x y >0$ and $\varphi(x) \varphi(y) >0$ for every $x,y \in I_j$. Noting the remark following Lemma $\ref{wm}$, we may apply Lemma $\ref{wm}$ for the multiplicative energies $M_{s, \frak{a}, \varphi}(A_1 \cup \dots \cup A_r)$, whereupon, we see that it suffices to bound $M_{s, \frak{a}, \varphi}(A_i)$ for each $1 \leq i \leq r$ individually. Moreover, for each such $A_i$, we can let $f(x) = \log x$ and $g(x) = \log \varphi (x)$ and apply Lemma $\ref{trut}$ to deduce that
\[ M_{s, \frak{a}, \varphi}(A) \ll_{s,d} |A^{(s)}/A^{(s)}| J_{s, \frak{a}, \varphi}(A).  \]
We obtain the required bound by substituting the estimate presented in the conclusion of Lemma $\ref{ob2}$.
\end{proof}

We end this section by recording the proof of Lemma $\ref{trut}$.

\begin{proof}[Proof of Lemma $\ref{trut}$]

 Let $Y = \{ (f(a), g(a)) \ | \ a \in A\}$ and let $G$ be the additive abelian group generated by elements of $Y$. Note that
\begin{align} \label{ugh}
 E_{s}(A;g) & = \sum_{a_1, \dots , a_{2s} \in A} \frak{a}(a_1) \dots \frak{a}(a_{2s}) \mathds{1}_{g(a_1) + \dots - g(a_{2s})=0}  \nonumber \\
 & = \sum_{\vec{n} \in sf(A) - sf(A)}  \sum_{a_1, \dots , a_{2s} \in A} \frak{a}(a_1) \dots \frak{a}(a_{2s}) \mathds{1}_{g(a_1) + \dots - g(a_{2s}) = 0} \mathds{1}_{f(a_1) + \dots - f(a_{2s}) = \vec{n}} . 
 \end{align}
We now introduce some further notation, and so, given functions $h_1, h_2 : G \to \mathbb{R}$, we define the function $h_1 * h_2 : G \to \mathbb{R}$ by setting
\[ (h_1*h_2)(\vec{x}) = \sum_{\vec{y} \in G} h_1(\vec{x} - \vec{y}) h_2(\vec{y}) , \]
for every $\vec{x} \in G$. Next, for every $s \geq 2$, we define the function $h_1*_{s} h_1= h_1 * (h_1 *_{s-1} h_1)$ as well as $h_1*_{1} h_1 = h_1* h_1$. Moreover, for any finite set $X$, we define the function $\frak{b}: G \to  [0, \infty)$ by letting $\frak{b}(x,y) = \frak{a}(a)$ whenever $(x,y) = (f(a), g(a))$ for some $a \in A$, and we set $\frak{b}(x,y) = 0$ for every other choice of $(x,y) \in G$. Finally, we let $\frak{b}': G \to  [0, \infty)$ be a function defined as $\frak{b}'(x,y) = \frak{b}(-x,-y)$ for every $(x,y) \in G$.
\par

The above notation allows us to rewrite $\eqref{ugh}$ as 
\[ E_{s, \frak{a}}(A;g) = \sum_{\vec{n} \in sf(A) - sf(A)} (\frak{b} *_{s-1} \frak{b} *\frak{b}' *_{s-1} \frak{b}' )(\vec{n}, 0) .\]
Applying Young's convolution inequality, we see that
\[ \sup_{(x,y) \in G} |(\frak{b} *_{s-1} \frak{b} *\frak{b}' *_{s-1} \frak{b}')(x,y)| \leq \Big(\sum_{(x,y) \in G} (\frak{b} *_{s-1} \frak{b})(x,y)^2 \Big)^{1/2}  \Big( \sum_{(x,y) \in G} (\frak{b}' *_{s-1} \frak{b}')(x,y)^2 \Big)^{1/2} . \]
%\[ \n{\frak{b} *_{s-1} \frak{b} *\frak{b}' *_{s-1} \frak{b}'}_{\infty} \leq \n{\frak{b} *_{s-1} \frak{b}}_2 \n{\frak{b}' *_{s-1} \frak{b}'}_2 = E_{s, \frak{a}}(A;f,g), \]
Double counting then implies that 
\[  \sum_{(x,y) \in G} (\frak{b} *_{s-1} \frak{b})(x,y)^2 = \sum_{(x,y) \in G} (\frak{b}' *_{s-1} \frak{b}')(x,y)^2 = E_{s, \frak{a}}(A;f,g), \]
which subsequently combines with the preceding discussion to give us the bound
\[ E_{s, \frak{a}}(A;g) \leq |sf(A) - sf(A)| E_{s, \frak{a}}(A;f,g), \]
thus concluding our proof of Lemma $\ref{trut}$.
\end{proof}

%---------------------------------------------------------------------------------------------------------------------------
%---------------------------------------------------------------------------------------------------------------------------
%---------------------------------------------------------------------------------------------------------------------------
%---------------------------------------------------------------------------------------------------------------------------
%---------------------------------------------------------------------------------------------------------------------------
%---------------------------------------------------------------------------------------------------------------------------

\section{Inverse results from additive combinatorics}

We utilise this section to present the various inverse results from additive combinatorics that we shall employ in our proof of Theorem $\ref{th3}$, and we begin this endeavour by recording some definitions. For any subsets $A, B$ of some abelian group $G$, define
\[ E(A,B) = |\{ (a_1, a_2, b_1, b_2) \in A^2 \times B^2 \ | \ a_1 + b_1 = a_2 + b_2\} \ \ \text{and} \ \ E(A) = E(A,A). \]
Moreover, throughout this part of the paper, we will denote $\mathcal{C}$ to be some large, absolute, computable, positive constant, which may change from line to line. 
\par 

First, we will record a classical result in additive combinatorics known as the Pl{\"u}nnecke--Ruzsa inequality \cite[Corollary 6.29]{TV2006},  see also \cite{Pe2012} for a shorter and simpler proof.

\begin{lemma} \label{pr21}
Let $A$ be a finite subset of some additive abelian group $G$. If $|A+A| \leq K|A|$, then for all non-negative integers $m,n$, we have
\[  |mA - nA| \leq K^{m+n}|A|. \]
\end{lemma}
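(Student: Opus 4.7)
The plan is to follow the Petridis-style proof of the Plünnecke--Ruzsa inequality. Choose a nonempty $X \subseteq A$ minimising the ratio $|A+X|/|X|$, and write $K' = |A+X|/|X|$; since $A$ itself is a candidate, $K' \leq K$. The first step is the Petridis magic lemma: for every finite set $C$ in the ambient group,
\[ |A + X + C| \leq K' |X + C|. \]
I would prove this by induction on $|C|$, with the base case $|C|=1$ being immediate from the definition of $K'$. For the inductive step, adjoining one new element $c$ to $C$, I would write
\[ A + X + (C \cup \{c\}) = (A + X + C) \cup (A + X + c), \]
and set $X' = \{x \in X : x + c \in X + C\}$. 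Then $A + X' + c \subseteq (A + X + C) \cap (A + X + c)$ provides a lower bound for the intersection, and the minimality of the ratio applied to $X' \subseteq A$ delivers $|A + X'| \geq K' |X'|$. Combining this with the inductive hypothesis for $C$, together with the straightforward identity $|X + (C \cup \{c\})| = |X + C| + |X| - |X'|$, completes the induction.

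Iterating Petridis' lemma, with $C$ taken successively to be $\{0\}, A, 2A, \ldots, (k-1)A$, yields $|X + kA| \leq (K')^k |X|$ for every $k \geq 0$.

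To handle the mixed difference $|mA - nA|$, I would appeal to Ruzsa's triangle inequality, which states that for finite sets $U, V, W$ in any abelian group
\[ |U|\cdot|V - W| \leq |U - V|\cdot|U - W|. \]
This is itself proved by exhibiting, for each fixed element $v - w$, an injection from $U$ into $(U-V) \times (U-W)$ by sending $u \mapsto (u - v, u - w)$ where $v, w$ are chosen to realise the difference. Applied with $U = -X$, $V = mA$ and $W = nA$, and using that $|{-X - Y}| = |X + Y|$ for any $Y$, this becomes
\[ |X| \cdot |mA - nA| \leq |X + mA| \cdot |X + nA| \leq (K')^m |X| \cdot (K')^n |X|, \]
and dividing through gives $|mA - nA| \leq (K')^{m+n} |X| \leq K^{m+n} |A|$, as required.

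The substantive obstacle is the Petridis lemma itself: the right identification of $X'$ and the careful manipulation of the union of $A + X + C$ and $A + X + c$ so that the extremality of the ratio $K'$ enters through the intersection is the single nontrivial idea; everything else, including the reduction from differences to sums via Ruzsa's triangle inequality, is routine bookkeeping.
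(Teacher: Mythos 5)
Your proof is correct. The key points all check out: the base case of the Petridis lemma is exactly the definition of $K'$; in the inductive step the containment $A+X'+c\subseteq(A+X+C)\cap(A+X+c)$ together with the extremality bound $|A+X'|\ge K'|X'|$ (valid since $X'\subseteq X\subseteq A$, and trivial if $X'=\emptyset$) and the exact identity $|X+(C\cup\{c\})|=|X+C|+|X|-|X'|$ give $|A+X+C\cup\{c\}|\le K'|X+C\cup\{c\}|$; iterating with $C=\{0\},A,2A,\dots$ gives $|X+kA|\le (K')^k|X|$; and Ruzsa's triangle inequality with $U=-X$, $V=mA$, $W=nA$ then yields $|mA-nA|\le (K')^{m+n}|X|\le K^{m+n}|A|$, with the degenerate cases $m=0$ or $n=0$ covered by the convention $0A=\{0\}$. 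Note, however, that the paper does not prove this lemma at all: it is quoted as a classical result from Tao--Vu \cite[Corollary 6.29]{TV2006}, where the proof goes through Pl\"unnecke's graph-theoretic machinery (commutative graphs and magnification ratios) rather than the argument you give. Your route is the now-standard Petridis approach, which is self-contained and more elementary than the cited graph-theoretic proof, and it proves the slightly stronger statement $|mA-nA|\le (K')^{m+n}|X|$ for an extremal subset $X\subseteq A$; for the purposes of this paper either proof (or the citation) suffices, since only the stated bound $K^{m+n}|A|$ is used.
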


We now present one of the main inverse results that we will use to prove Theorem $\ref{th3}$.

\begin{theorem} \label{th46}
Let $A \subseteq \mathbb{R}$ be a finite set, let $s\geq 16$ be an integer and let $E_{s}(A)  = |A|^{2s-1}/K$, for some $K \geq 1$. Then there exists $2 \leq s' \leq s$ and a finite, non-empty set $U' \subseteq s'A$ satisfying
\[ |U'| \ll K |A|  \ \ \text{and} \ \ \max_{x \in U'-A} |(A+x) \cap U'| \gg \frac{|A|}{K^{\mathcal{C}/\log s} } \]
such that for every $m,n \in \mathbb{N} \cup \{0\}$, we have
\[  |m U' - nU'| \ll_{m,n} K^{\mathcal{C}(m+n)/ \log s} |U'| . \]
\end{theorem}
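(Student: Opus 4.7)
The plan is to deduce Theorem $\ref{th46}$ from a refined $s$-fold Balog--Szemer\'{e}di--Gowers-type theorem, along the lines of the author's prior work \cite{Mu2021c}, with the technical gain lying in the $(\log s)^{-1}$ factor in the exponents.

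I would begin with a dyadic pigeonhole on the representation function $r_s(x) = |\{(a_1,\dots,a_s) \in A^s : a_1 + \cdots + a_s = x\}|$. The hypothesis $\sum_x r_s(x)^2 = |A|^{2s-1}/K$ and the pointwise bound $r_s(x) \leq |A|^{s-1}$ together yield a dyadic threshold $\tau$ and a level set $P = \{x \in sA : r_s(x) \in [\tau, 2\tau)\}$ with $\tau^2 |P| \gg |A|^{2s-1}/(K \log |A|)$ and $\tau |P| \leq |A|^s$. This identifies a ``rich'' subset of $sA$ on which representations concentrate, with $|P| \ll K|A| \log |A|$; in particular, $\tau \gg |A|^{s-1}/(K \log |A|)$, which encodes the heuristic that $P$ should already resemble the output set $U'$ up to an amplification step.

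I would then apply the iterative BSG-type extraction from \cite{Mu2021c} to $P$. This produces some $2 \leq s' \leq s$ and a set $U' \subseteq s'A$ with $|U'| \ll K|A|$, together with the Pl\"{u}nnecke-type bound $|mU' - nU'| \ll_{m,n} K^{\mathcal{C}(m+n)/\log s} |U'|$. The intersection estimate $\max_{x \in U'-A}|(A+x) \cap U'| \gg |A|/K^{\mathcal{C}/\log s}$ is built into this procedure: the extraction selects $U'$ so that typical elements of $U'$ admit a high-multiplicity $s'$-fold decomposition over $A$, and a pigeonhole over the first $s'-1$ summands then produces an $(s'-1)$-tuple $\vec{b} \in A^{s'-1}$ for which the fibre $\{a \in A : a + b_1 + \cdots + b_{s'-1} \in U'\}$ meets the required lower bound. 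Here one relies on $U'$ being concentrated on the popular-sum level set $P$, so that the quantity $\sum_{\vec{b}} |(A + b_1 + \cdots + b_{s'-1}) \cap U'| = \sum_{x \in U'} r_{s'}(x)$ is at least $\tau |U'|$, which, after the BSG refinement, gives the advertised lower bound.

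The main obstacle is the $(\log s)^{-1}$ improvement in the sumset exponent: a naive BSG--Pl\"{u}nnecke iteration would only deliver $|mU'-nU'| \ll K^{O(m+n)}|U'|$, losing a polynomial factor of $K$ in each step. The refined strategy from \cite{Mu2021c} instead iterates a doubling-type amplification $\Theta(\log s)$ times, paying only a bounded constant factor per round by carefully tracking popular-sum statistics across the iteration rather than applying Pl\"{u}nnecke to the whole set at each stage; this is the technical heart of the argument, and the stage at which the proof most substantially departs from classical Balog--Szemer\'{e}di--Gowers technology.
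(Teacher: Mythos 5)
There is a genuine gap: the mechanism by which the exponent acquires the crucial factor of $1/\log s$ is never actually supplied. In the paper this gain does not come from an iteration that ``pays only a bounded constant factor per round''; it comes from a pigeonhole over the $\approx \log s$ dyadic energy scales. Writing $K = |A|^{\nu-1}$ and $2^l \leq s < 2^{l+1}$, one uses Lemma \ref{reprove} to pass from $E_s(A)$ to $E_{2^l}(A) \geq |A|^{2^{l+1}-\nu}$ and then telescopes the ratios $E_{2^i}(A)/\big(E_{2^{i-1}}(A)|A|^{2^i}\big)$ for $2 \leq i \leq l$: since their product is at least $|A|^{1-\nu} = K^{-1}$, some single scale $i$ loses only $|A|^{-(\nu-1)/(l-1)} = K^{-O(1/\log s)}$. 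At that scale one invokes the dichotomy extracted from \cite{Mu2021c} (Proposition \ref{prop1}, applied with $s = 2^i$ and $\delta = (\nu-1)/(l-1)$): either $E_{2^{i-1}}(A)$ is too large, which the choice of $i$ forbids, or a set $U' \subseteq 2^{i-1}A$ exists with all losses of size $|A|^{O(\delta)} = K^{O(1/\log s)}$; this is also where the value of $s'$ in the statement comes from. Your proposal instead performs a dyadic pigeonhole on the representation function $r_s$ only at the top level $s$ and then defers everything to ``the iterative BSG-type extraction from \cite{Mu2021c}'', asserting that a doubling-type amplification can be run $\Theta(\log s)$ times at bounded cost per round by ``tracking popular-sum statistics''. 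No argument is given for that assertion, and it is precisely the point at issue: a naive BSG--Pl\"{u}nnecke round genuinely costs a power of $K$, and the way the literature (including \cite{Mu2021c} and the present paper) avoids paying it $\log s$ times is to select one good energy scale rather than to make every round cheap. As written, the claimed bound $|mU'-nU'| \ll_{m,n} K^{\mathcal{C}(m+n)/\log s}|U'|$ is therefore unsubstantiated.

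A secondary problem is the derivation of the intersection bound. Your threshold $\tau \gg |A|^{s-1}/(K\log|A|)$ concerns $s$-fold representations, but $U'$ lives in $s'A$ and the averaging identity you invoke, $\sum_{\vec{b} \in A^{s'-1}} |(A+b_1+\cdots+b_{s'-1}) \cap U'| = \sum_{x \in U'} r_{s'}(x)$, requires popularity at level $s'$, not level $s$; since $s'$ is only determined inside the black box (in the paper, $s' = 2^{i-1}$ is fixed by the energy pigeonhole), the inequality $\sum_{x \in U'} r_{s'}(x) \geq \tau|U'|$ does not follow from your level-$s$ construction of $P$, and the lower bound $\max_{x}|(A+x)\cap U'| \gg |A|/K^{\mathcal{C}/\log s}$ is not reached. (The small-$s$ regime, handled in the paper by Schoen's theorem \cite{Sch2015} plus Lemma \ref{pr21}, is a minor omission by comparison, since for bounded $s$ the exponent $\mathcal{C}/\log s$ is comparable to a constant.) To repair the proposal you would need to make the energy-scale pigeonhole explicit and state precisely the dichotomy from \cite{Mu2021c} that you are applying at the selected scale, rather than treating the extraction as a single opaque step.
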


We will prove this by utilising some of the ideas from \cite{Mu2021c},  and in particular, we will use the following result that may be deduced from the proof of Proposition 2.3 in \S8 of \cite{Mu2021c}.

\begin{Proposition} \label{prop1}
Let $\nu, \delta$ be positive real numbers such that $\nu \geq 1$ and let $s \geq 4$ be some even number. Moreover, suppose that $A \subseteq \mathbb{R}$ is a finite, non-empty set such that $E_{s}(A) \geq |A|^{2s - \nu}$. Then we either have $E_{s/2}(A) > |A|^{s - \nu + \delta},$ or there exists some non-empty subset $U' \subseteq (s/2)A$ satisfying
\[ |U'| \ll |A|^{\nu} \ \ \text{and} \ \ \max_{x \in U' -A} |(A+x) \cap U'| \gg  |A|^{1 - 82 \delta}   \]
such that for every $m, n \in \mathbb{N} \cup \{0\}$, we have
\[ |mU' - nU'| \ll_{m,n} |A|^{240(m+n) \delta} |U'|. \]
\end{Proposition}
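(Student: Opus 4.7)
The plan is to assume the first alternative $E_{s/2}(A) > |A|^{s-\nu+\delta}$ fails, so $E_{s/2}(A) \leq |A|^{s-\nu+\delta}$, and from this hypothesis to construct the required set $U' \subseteq (s/2)A$. Let $f := r_{(s/2)A}$, so that $\n{f}_1 = |A|^{s/2}$, $\n{f}_2^2 = E_{s/2}(A) \leq |A|^{s-\nu+\delta}$, and $\n{f \ast f}_2^2 = E_s(A) \gs |A|^{2s-\nu}$. The key quantitative input is that Young's inequality $\n{f \ast f}_2 \leq \n{f}_1 \n{f}_2$ is saturated up to a factor of $|A|^{\delta/2}$, so $f$ must be nearly extremal in a structural sense that can be converted into additive structure.

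First I would perform a dyadic decomposition $f = \sum_i f_i$ with $f_i = f \cdot \mathbf{1}_{L_i}$ for $L_i := \{v \in (s/2)A : f(v) \in [2^i, 2^{i+1})\}$. Expanding $\n{f \ast f}_2^2 = \sum_{i,j,k,\ell} \langle f_i \ast f_j, f_k \ast f_\ell \rangle$, dyadically pigeonholing over the $O((\log|A|)^4)$ contributing tuples, and collapsing via Cauchy--Schwarz $\n{f_i \ast f_j}_2^2 \leq \n{f_i \ast f_i}_2 \n{f_j \ast f_j}_2$, I extract a single index $i^\ast$ for which, setting $L := L_{i^\ast}$ and $\tau := 2^{i^\ast}$, one has $\tau^4 E(L) \asymp \n{f_{i^\ast} \ast f_{i^\ast}}_2^2 \gs |A|^{2s-\nu}/(\log |A|)^{O(1)}$. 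Combining this with the trivial bounds $\tau |L| \leq |A|^{s/2}$ and $\tau^2 |L| \leq |A|^{s-\nu+\delta}$ yields
\[ \tau \gs |A|^{s/2-\nu}, \qquad |A|^{\nu - 2\delta} \ls |L| \leq |A|^\nu, \qquad E(L) \gs |L|^3/|A|^{O(\delta)}. \]

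Next I apply a polynomial-loss Balog--Szemer\'edi--Gowers theorem (in Schoen/Sisask form) to $L$ with energy parameter $K = |A|^{O(\delta)}$ to produce $U' \subseteq L \subseteq (s/2)A$ with $|U'| \geq |L|/|A|^{O(\delta)}$ and $|U' + U'| \leq |A|^{O(\delta)} |U'|$; the Pl\"unnecke--Ruzsa inequality (Lemma \ref{pr21}) then upgrades this to $|mU' - nU'| \leq |U'| \cdot |A|^{\mathcal{C}(m+n)\delta}$ for every $m,n \in \mathbb{N} \cup \{0\}$, with some computable $\mathcal{C}$ which can be taken to be at most $240$ after bookkeeping. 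The size bound $|U'| \leq |L| \ll |A|^\nu$ is automatic.

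For the overlap condition, I would decompose $r_{(s/2)A}(u) = \sum_w r_{(s/2-1)A}(w) \mathbf{1}_{u - w \in A}$ and sum over $u \in U'$ to obtain
\[ \sum_w r_{(s/2-1)A}(w) \, |(A+w) \cap U'| = \sum_{u \in U'} r_{(s/2)A}(u) \asymp |U'| \tau. \]
Dividing by $\n{r_{(s/2-1)A}}_1 = |A|^{s/2-1}$ and pigeonholing on $w$ yields some $w_0$ with $|(A+w_0) \cap U'| \gs |U'| \cdot \tau/|A|^{s/2-1} \gs |U'| \cdot |A|^{1-\nu} \gs |A|^{1-O(\delta)}$, using the lower bound $|U'| \gs |A|^{\nu - O(\delta)}$ from above; nonvanishing of $|(A+w_0) \cap U'|$ forces $w_0 \in U' - A$. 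The main obstacle is the precise constant tracking through the dyadic pigeonholes, the BSG extraction, and the Pl\"unnecke iteration in order to land on exactly $82$ and $240$; this requires the sharpest polynomial-loss version of BSG (producing $K^{O(1)} |L|$-type rather than $K^{O(\log K)} |L|$-type sumset bounds) and careful accumulation of each $|A|^{O(\delta)}$ loss, with the factor $82$ in particular arising from the combined losses in the level-set extraction and the averaging in the overlap step.
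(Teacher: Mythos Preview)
The paper does not give its own proof of this proposition; it is quoted as a consequence of the proof of Proposition~2.3 in \S8 of \cite{Mu2021c} and then used as a black box in the derivation of Theorem~\ref{th46}. So there is no in-paper argument to compare against directly.

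Your outline is the standard route for a higher-energy Balog--Szemer\'edi--Gowers statement and is essentially what one expects the argument in \cite{Mu2021c} to look like: level-set decomposition of $r_{(s/2)A}$, extraction of a popular scale $L$ with near-maximal additive energy, Schoen-type BSG on $L$ followed by Pl\"unnecke--Ruzsa for the iterated sumset bound, and an averaging identity $\sum_{u\in U'} r_{(s/2)A}(u)=\sum_w r_{(s/2-1)A}(w)\,|(A+w)\cap U'|$ to produce the large overlap with a translate of $A$. The quantitative chain $\tau\gs |A|^{s/2-\nu}$, $|L|\ll|A|^{\nu}$, $|L|\gs|A|^{\nu-2\delta}$, $E(L)\gs|L|^3/|A|^{O(\delta)}$ is correct as stated.

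One point to be careful about: your dyadic pigeonholing over $O((\log|A|)^4)$ quadruples introduces a $(\log|A|)^{O(1)}$ loss in the lower bound for $\tau^4 E(L)$, and since $\delta$ is a free parameter that may be as small as $c/\log|A|$ (and indeed is, in the application inside the proof of Theorem~\ref{th46}), this loss is not automatically absorbed by $|A|^{O(\delta)}$. The proposition as stated carries no logarithmic factors, so to match it exactly you would need either a log-free extraction of the level set (for instance pigeonholing on $\sum_i \n{f_i}_2^2$ against the single bound $\n{f}_2^2$ rather than on the four-index expansion of $\n{f*f}_2^2$, which costs only a bounded factor in the exponent of $\delta$) or to track the logs through and verify they are harmless in every downstream use. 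Apart from this bookkeeping issue, the skeleton is correct.
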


We are now ready to prove Theorem $\ref{th46}$.

\begin{proof}[Proof of Theorem $\ref{th46}$]
%We begin by claiming that whenever $s \leq 16$, then Theorem $\ref{th46}$ may be deduced from the standard Balog-Szemer\'{e}di-Gowers theorem in a relatively straightforward manner. In particular, combining the hypothesis $E_{s}(A) = |A|^{2s -1}/K$ along with Lemma $\ref{reprove}$ gives us
%\[ E_{2}(A) \geq E_{s}(A) |A|^{-2s + 4} \geq |A|^{3}/K, \]
%which, in turn, allows us to apply Schoen's quantitative improvement of the Balog-Szemer\'{e}di-Gowers theorem \cite[Theorem 1.1]{Sch2015} to obtain some set $A' \subseteq A$ satisfying
%\[ |A'| \gg |A|/K \ \ \text{and} \ \ |A' - A'| \ll K^{4} |A'|.\]
%Putting this together with Lemma $\ref{pr21}$ delivers the desired result when $s \leq 16$.
%\par
%
%
Letting $l \geq 4$ be the integer satisfying $2^l \leq s < 2^{l+1}$,  we may use Lemma \ref{reprove} to deduce that
\[ E_{2^l}(A) \geq E_{s}(A) /|A|^{2s - 2^{l+1}} \geq |A|^{2^{l+1} - \nu} ,\]
where we write $|A|^{\nu-1} = K$. In particular, this means that
\[ |A|^{1-\nu } \leq \frac{E_{2^l}(A)}{E_{2}(A) |A|^{2^{l+1} -4} }= \prod_{i=2}^{l} \frac{E_{2^{i}}(A)}{E_{2^{i-1}}(A) |A|^{2^{i}} }, \]
whereupon, there exists some $2 \leq i \leq l$ such that
\[ E_{2^{i}} (A) \geq |A|^{2^i} E_{2^{i-1}}(A)  |A|^{(1 - \nu)/(l-1) } . \]
%Let $1 \leq j \leq l$ be the largest such index $i$, that is, we assume that the above expression does not hold for all $j+1 \leq i \leq l$. This gives us
%\[ \frac{|A|^{2^{j+1} - \nu} }{E_{2^j}(A)} \leq \frac{E_{2^l}(A)}{E_{2^j}(A) |A|^{2^{l+1} - 2^{j+1} } } = \prod_{i=j+1}^{l}  \frac{E_{2^{i}}(A)}{E_{2^{i-1}}(A) |A|^{2^{i}} } < |A|^{(1-j/l)(1 - \nu)}  ,\]
%and so, we have
%\[ E_{2^j}(A) > |A|^{2^{j+1} - \nu - 1 + \nu +j/l - j\nu/l} = |A|^{ 2^{j+1} - 1  - j(\nu -1)/l  }.\]
Defining $\nu'$ to be the real number such that $E_{2^i}(A) = |A|^{2^{i+1} - \nu'}$, we see that $\nu \geq \nu' \geq 1$ since
\[ |A|^{2^{i+1} - 1} \geq E_{2^i}(A) \geq E_{2^l}(A) |A|^{-2^{l+1} + 2^{i+1}} \geq |A|^{2^{i+1} - \nu} .\]
Furthermore, by the preceding discussion, we have that
\[ E_{2^{i-1}}(A) \leq |A|^{2^i - \nu' + (\nu - 1)/(l-1) } . \]
We may now apply Proposition $\ref{prop1}$ with $s = 2^i$ and $\delta = (\nu-1)/(l-1)$ to deduce the existence of some finite, non-empty set $U' \subseteq 2^{i-1}A$ satisfying
\[ |U'| \ll |A|^{\nu'} \ll |A|^{\nu} \ \ \text{and} \ \ \max_{x \in U'-A} |(A+x) \cap U'| \gg |A|^{1 - 82 (\nu-1)/(l-1)}  \gg |A|^{1 - 164 (\nu-1)/l} \]
such that for every $m , n \in \mathbb{N} \cup \{0\}$, we have
\[ |mU'-nU'| \ll_{m}  |A|^{240(m+n)(\nu - 1)/(l-1) }  |U'| \ll_{m} |A|^{480(m+n) (\nu-1)/l}|U'|. \]
We obtain the desired conclusion by noting that $(\log s)/2 \leq   l \leq \log s$ and substituting $|A|^{\nu - 1} = K$.
\end{proof}

Theorem \ref{th46} can be seen as an efficient many-fold version of a Balog--Szemer\'{e}di--Gowers type theorem,  see \cite{Sch2015} for more details about the latter.  We further note that a multiplicative analogue of Theorem \ref{th46} for finite sets $A \subseteq \mathbb{N}$ follows in a straightforward manner from Theorem \ref{th46} by considering the logarithmic map from $\mathbb{N}$ to $[0, \infty)$.

Our next goal in this section is to prove the following lemma which arises from adapting some of the methods from \cite{PZ2020}, see also \cite{Gr2022}.

\begin{lemma} \label{cmb}
Let $A, X$ be finite, non-empty subsets of $\mathbb{N}$ such that $|A \cdot X\cdot X| \leq K|X|$, for some $K \geq 1$. Then there exists a subset $B \subseteq A$ such that
\[ |B| \geq |A|/K \ \ \text{and}  \ \ q(B) \leq \log (2K) . \]
\end{lemma}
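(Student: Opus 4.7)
The plan is to locate a large, multiplicatively structured subset $B\subseteq A$ by an averaging argument and then to exhibit a shallow decision tree for it by iterating a Chang-type splitting. First, observe that the hypothesis forces $|X\cdot X|\le K|X|$, since for any fixed $a_0\in A$ we have $a_0\cdot X\cdot X\subseteq A\cdot X\cdot X$, giving $|X\cdot X|=|a_0\cdot X\cdot X|\le K|X|$. The Pl\"{u}nnecke--Ruzsa inequality (Lemma~\ref{pr21}) then controls all iterated multiplicative sets of $X$, a fact that will be used throughout.

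Next, I would run a pigeonhole step on $A\cdot X\cdot X$. The total number of representations of elements of $A\cdot X\cdot X$ as products $a\cdot y$ with $a\in A$ and $y\in X\cdot X$ is $|A|\cdot|X\cdot X|$, while the number of distinct target values is $|A\cdot X\cdot X|\le K|X|\le K|X\cdot X|$; hence some $z^*\in A\cdot X\cdot X$ admits at least $|A|/K$ such representations. Define $B=\{a\in A:z^*/a\in X\cdot X\}$, so that $|B|\ge|A|/K$ and $B\subseteq z^*\cdot(X\cdot X)^{-1}$. Crucially, the query complexity is invariant under a global multiplicative shift and under inversion, because $\nu_p(z^*/y)=\nu_p(z^*)-\nu_p(y)$ depends bijectively on $\nu_p(y)$ for every prime $p$; thus bounding $q(B)$ reduces to bounding the query complexity of the subset of $X\cdot X$ corresponding to $B$ under $y\mapsto z^*/y$.

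The main obstacle, and the third step, is to produce a decision tree of depth at most $\log(2K)$ distinguishing those elements. I would do this by iterating a Chang-type splitting in the spirit of Lemmas~\ref{chang} and~\ref{ob2}: at each level, identify a prime $p$ at which the undistinguished elements of $B$ split non-trivially along $\nu_p$, and add that prime as the next query node, with subsequent primes allowed to depend on the previous answer (matching the flexibility built into the definition of $q$). The heart of the argument is a uniform halving lemma: at each level either the current leaf already contains a single element, or one can find a prime $p$ whose $\nu_p$-splitting roughly halves the leaf while we still retain a subset satisfying a small-doubling-type condition. Iterating this $\log(2K)$ times produces the required tree. The main technical difficulty --- and the reason this is a Freiman-type structural statement rather than a soft consequence of small doubling --- is verifying this halving step, exploiting the asymmetric hypothesis $|A\cdot X\cdot X|\le K|X|$ and mirroring the methods of P\'{a}lv\"{o}lgyi--Zhelezov~\cite{PZ2020} and Green~\cite{Gr2022}.
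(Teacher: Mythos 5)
There is a genuine gap: your third step is precisely the content of the lemma, and the sketch you give for it would not work as described. To have $q(B)\leq \log(2K)$ you need every leaf of the decision tree at depth $\log(2K)$ to be a singleton; a prime whose $\nu_p$-values ``roughly halve'' each leaf, iterated $\log(2K)$ times, only shrinks leaves to size about $|B|/(2K)$, which in the interesting regime $K\ll |A|$ is far from separating points, and you give no mechanism forcing termination in singletons after so few queries. Worse, your plan requires the \emph{entire} pigeonholed set $B=\{a\in A: z^*/a\in X\cdot X\}$ to have small query complexity with no further refinement, since you have already spent the allowed factor of $K$ on the popular-product pigeonhole. But the hypothesis $|A\cdot X\cdot X|\leq K|X|$ only constrains the quasicube-type substructure of $A$ (hence of $B$), and bounded quasicube structure controls the query complexity of a \emph{large subset}, not of the whole set; any further passage to a subset inside your tree-building would degrade the size bound to roughly $|A|/K^2$. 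The preliminary observations ($|X\cdot X|\leq K|X|$, Pl\"unnecke--Ruzsa, invariance of $q$ under $y\mapsto z^*/y$) are fine but never actually used to overcome this.

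For comparison, the paper's proof spends its factor of $K$ exactly at the structural refinement step and uses the hypothesis only once, through quasicube expansion: map everything to exponent vectors via $\psi(y)=(\nu_{p_1}(y),\dots,\nu_{p_r}(y))$, so the hypothesis reads $|\psi(A)+\psi(X)+\psi(X)|\leq K|\psi(X)|$; by \cite[Theorem 2.7]{MRSZ2020} any binary set $V\subseteq\psi(A)$ satisfies $|V|\leq |V+\psi(X)+\psi(X)|/|\psi(X)|\leq K$; then \cite[Proposition 8.4.2]{Gr2022} produces $A'\subseteq A$ with $|A'|\geq |A|/|V|\geq |A|/K$ and skew-dimension $d_*(\psi(A'))\leq \log|V|\leq\log K$, and finally $q(A')\leq d_*(\psi(A'))+1\leq\log(2K)$. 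If you want to salvage your route, you would need to either prove your ``uniform halving lemma'' in a form that genuinely yields singleton leaves at depth $\log(2K)$ for the full set $B$, or drop the pigeonhole and prove the dichotomy/refinement statement directly for $A$ --- which is exactly what the binary-set expansion theorem plus the skew-dimension decomposition accomplish.
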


In our proof of the above result,  we  will closely follow ideas and definitions as recorded in \cite[Chapter $8$]{Gr2022},  and so, given $r \in \mathbb{N}$, we define the map $\pi_{i,r} : \mathbb{Z}^r \to \mathbb{Z}$ as $\pi_{i,r}(x_1, \dots, x_r) = x_i$ for every $(x_1, \dots , x_r) \in \mathbb{Z}^r$. We denote a finite set $X \subseteq \mathbb{Z}$ to be a \emph{quasicube} if $|X|=2$. Moreover, when $r \geq 2$ and $X \subseteq \mathbb{Z}^r$ is some finite set, then we write $X$ to be a \emph{quasicube} if $\pi_{r,r}(X) = \{y_1, y_2\}$ for some distinct $y_1, y_2 \in \mathbb{Z}$ as well as if the sets 
\[ \{ (x_1, \dots, x_{r-1}) \in \mathbb{Z}^{r-1} \ | \ (x_1, \dots, x_{r-1}, y_1) \in X\} \] 
and
 \[ \{ (x_1, \dots, x_{r-1}) \in \mathbb{Z}^{r-1} \ | \ (x_1, \dots, x_{r-1}, y_2) \in X\} \]
are also quasicubes. Morever, a subset of a quasicube is called a \emph{binary set}. 
\par

We define a set $V \subseteq \mathbb{Z}^r$ to be an \emph{axis aligned subspace} if $V = X_1 \times \dots \times X_r$, where for every $1 \leq i \leq r$, we either have $X_i = \{x_i\}$ for some $x_i \in \mathbb{Z}$ or $X_i = \mathbb{Z}$. Moreover, if $\pi_{i,r}(V)$ is not a singleton for some $1 \leq i \leq r$, then we call $\pi_{i,r}$ to be a \emph{coordinate map} on $V$. Finally, given a finite subset $X$ of some axis aligned subspace $V$, we will now define its \emph{skew-dimension} $d_*(X)$. If $|X| = 1$, then we denote $\dim_*(X) = 0$. Otherwise, let $1 \leq i \leq r$ be the largest number such that $\pi_{i,r}$ is a coordinate map on $V$ and $|\pi_{i,r}(A)| >1$. In this case, we define
\[ d_*(A) = 1 + \max_{x \in \mathbb{Z}} \dim_*( \pi_{i,r}^{-1}(x) \cap A). \]
\par

With this notation in hand, we are now ready to prove Lemma $\ref{cmb}$.

\begin{proof}[Proof of Lemma $\ref{cmb}$]
Since $A, X \subseteq \mathbb{N}$ are finite sets, we note that the set 
\[ \mathcal{P}  = \{ p \ \text{prime} \ | \ p \ \text{divides} \ y \ \ \text{for some} \ y \in A \cup X \}, \]
is finite, whence, writing $\mathcal{P} = \{ p_1, \dots, p_r\}$ for some $r \in \mathbb{N}$, we define the map $\psi : \mathbb{N} \to \mathbb{Z}^d$ such that $\psi(y) = (\nu_{p_1}(y), \dots, \nu_{p_r}(y))$. For the purposes of this proof, let $\pi_i : \mathbb{Z}^r \to \mathbb{Z}$ be the projection map defined as $\pi_i(x_1, \dots, x_r) = x_i$ for every $1 \leq i \leq r$. Our hypothesis now implies that $|\psi(A) + \psi(X) + \psi(X)| \leq K |\psi(X)|$, and so, writing $V$ to be the largest binary set contained in $\psi(A)$, we use \cite[Theorem 2.7]{MRSZ2020} to deduce that
\[ |V| \leq |V + \psi(X) + \psi(X)| |\psi(X)|^{-1} \leq |\psi(A) +  \psi(X) + \psi(X)| |\psi(X)|^{-1} \leq K. \]
But now, we may employ \cite[Proposition 8.4.2]{Gr2022} to infer that there exists $A' \subseteq A$ such that
\[ |A'| \geq |A|/|V| \geq |A|/K \ \ \text{and} \ \ d_*(\psi(A')) \leq \log |V| \leq \log K. \]
Noting the definition of skew-dimension and query complexity,  it is relatively straightforward to show that $q(A') \leq d_{*}(\psi(A')) +1$.  Combining this with the preceding discussion dispenses the desired conclusion.
\end{proof}

A key ingredient in our proof of Theorem $\ref{th3}$ will be iterative applications of Theorem $\ref{th46}$. In particular, assuming our set $A$ to have a large multiplicative energy, we will apply Theorem $\ref{th46}$ to extract a large subset of $A$ that exhibits various other types of multiplicative structure. Moreover, we keep repeating this argument until the remaining set has a small multiplicative energy. Here, it will be important to be able to control the number of steps that such an iterative process takes, a task for which we will employ \cite[Lemma 4.2]{Mu2021c}. We present this below.

\begin{lemma} \label{tym}
Let $0 < c < 1$ and $C>0$ be constants. Let $A_0 = A$, and for each $i \geq 1$, define $A_i = A_{i-1} \setminus U_i$ where $U_i$ is some set satisfying $|U_i| \geq C|A_{i-1}|^{1-c}$. Then, for some $r \leq 2 (\log |A| + 2) + C^{-1} \frac{|A|^{c}}{2^c - 1}$, we must have $|A_r| \leq 1$. 
\end{lemma}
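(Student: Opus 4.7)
The plan is to proceed by a simple potential-function argument, tracking the quantity $\phi(|A_i|)$ where $\phi(n) = n^c$. The hypothesis $|A_i| \leq |A_{i-1}| - C|A_{i-1}|^{1-c}$ together with the concavity of $\phi$ (valid since $0 < c < 1$) should translate into a uniform additive decrement $\phi(|A_{i-1}|) - \phi(|A_i|) \geq cC$ at every step; telescoping then bounds $r$ cleanly.

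The key computation is as follows. For any reals $0 \leq n \leq m$ with $m \geq 1$, an application of the mean value theorem to $\phi$ on $[n,m]$ produces some $\xi \in (n,m)$ with $\phi(m) - \phi(n) = c\xi^{c-1}(m-n)$. Since $\phi'(x) = cx^{c-1}$ is decreasing on $(0,\infty)$, this gives $m^c - n^c \geq c m^{c-1}(m-n)$. Substituting $m = |A_{i-1}|$ and $n = |A_i|$, and using $m - n \geq Cm^{1-c}$ from the hypothesis, one obtains
\[ \phi(|A_{i-1}|) - \phi(|A_i|) \geq c|A_{i-1}|^{c-1} \cdot C|A_{i-1}|^{1-c} = cC. \]
Telescoping over $i = 1, \dots, r$ then yields $|A_r|^c \leq |A|^c - rcC$; once the right-hand side drops below $1$, the stopping condition $|A_r| \leq 1$ is forced.

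To match the numerical form of the stated bound, I would note that the function $g(c) = c - (2^c - 1)$ is concave on $[0,1]$ (its second derivative is $-(\log 2)^2 \cdot 2^c < 0$) and vanishes at both endpoints, hence $g(c) \geq 0$, giving $1/c \leq 1/(2^c - 1)$ for all $c \in (0,1)$. Consequently the telescoping bound $r \leq |A|^c/(cC) + O(1)$ is majorised by $C^{-1}|A|^c/(2^c - 1) + 2(\log|A|+2)$, with the additive $O(1)$ being absorbed comfortably into the $2(\log|A|+2)$ slack (the edge cases $|A| \leq 1$ being trivial, since then $r = 0$ suffices).

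The main obstacle is essentially nonexistent; this is a short calculus exercise. The only small point requiring attention is the applicability of the mean value theorem on the very last step, where possibly $|A_r| = 0$: since $\phi$ is continuous on $[0,\infty)$ and differentiable on $(0,\infty)$, the MVT applied to $[0, |A_{r-1}|]$ still furnishes an interior $\xi > 0$ at which $\phi'(\xi)$ is well-defined, so the uniform drop $cC$ remains valid throughout the iteration.
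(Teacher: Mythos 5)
Your argument is correct, and it is worth noting that this paper does not actually prove Lemma \ref{tym} at all: it is imported verbatim from \cite[Lemma 4.2]{Mu2021c}, where the bound of the shape $2(\log|A|+2)+C^{-1}|A|^{c}/(2^{c}-1)$ comes out of a dyadic counting argument (bounding how many steps the iteration can spend in each dyadic range of $|A_i|$ and summing the resulting geometric series). Your route is genuinely different and, if anything, cleaner: the concavity estimate $m^{c}-n^{c}\geq c\,m^{c-1}(m-n)$ turns the hypothesis into a uniform decrement $\geq cC$ of the potential $|A_i|^{c}$, and telescoping gives $r\leq |A|^{c}/(cC)$ (even without the $+1$, since the potential is nonnegative at the stopping step), which you then correctly majorise by the stated expression via $2^{c}-1\leq c$ on $(0,1)$; the concavity proof of that inequality and the absorption of the additive constant into $2(\log|A|+2)$ are both fine, as is the treatment of the endpoint of the MVT when the final set is empty. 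So your proof is valid and in fact yields a slightly stronger bound than the quoted one. One small reading point: you silently rewrite the hypothesis as $|A_i|\leq|A_{i-1}|-C|A_{i-1}|^{1-c}$, which presumes $U_i\subseteq A_{i-1}$ (as literally stated, $A_i=A_{i-1}\setminus U_i$ only gives $|A_i|\geq|A_{i-1}|-|U_i|$); this is clearly the intended reading, and it is how the lemma is used in \S7 where the removed sets $B_i$ are subsets of $A_{i-1}$, but it deserves a sentence.
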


%---------------------------------------------------------------------------------------------------------------------------
%---------------------------------------------------------------------------------------------------------------------------
%---------------------------------------------------------------------------------------------------------------------------
%---------------------------------------------------------------------------------------------------------------------------
%---------------------------------------------------------------------------------------------------------------------------
%---------------------------------------------------------------------------------------------------------------------------

\section{Proof of Theorem $\ref{th3}$}

We dedicate this section to proving Theorem $\ref{th3}$.  Let $\mathcal{D}$ be some large constant that we will fix later, and let $s$ be some natural number sufficiently large in terms of $\mathcal{D}$. We define 
\begin{equation} \label{tktk}
k = \ceil{ \log s / (\mathcal{D} \log \log s) } 
\end{equation}
We first begin with assuming that our set $A$ is a subset of $\mathbb{N} \setminus \mathcal{Z}_{\vec{\varphi}}$. Furthermore, we may assume that $M_{s}(A) > |A|^{2s - k}$, since otherwise, we are done.
\par

As previously mentioned, we now perform an iteration, where at each step, we extract a large subset of our set $A$ which satisfies suitable arithmetic properties. We first set $A_0 = A$. Next, for every $i \in \mathbb{N}$, we will begin the $i^{th}$ step of our iteration with some subset $A_{i-1} \subseteq A_0$. If $M_{s}(A_{i-1}) \leq |A_{i-1}|^{2s - k}$, we stop our iteration, else, we apply a multiplicative version of Theorem $\ref{th46}$ to obtain a finite, non-empty subset $U_i \subseteq \mathbb{N}$ satisfying
\[ |U_i| \ll |A_{i-1} |^k \ \ \text{and} \ \ \max_{x \in U_i \cdot A_{i-1}^{-1} } |A _{i-1} \cap x^{-1} \cdot U_i| \gg |A_{i-1}|^{ 1- k \mathcal{C}/ \log s} , \]
such that
\[ |U_i^{(m)} / U_i^{(n)} | \ll_{m,n} |A_{i-1}|^{k \mathcal{C}(m+n) / \log s} |U_i| \]
holds true for every $m,n \in \mathbb{N} \cup \{0\}$. In particular, writing $A_{i-1}' =  A_{i-1} \cap x^{-1} \cdot U_i$, for some $x \in U_i \cdot A_{i-1}^{-1}$ which maximises $|A_{i-1} \cap x^{-1} \cdot U_i|$, we get that $|A_{i-1}'| \gg |A_{i-1}|^{ 1- k \mathcal{C}/ \log s}$. Thus we see that
 \[ |A_{i-1}' \cdot U_i \cdot U_i| \leq |U_i^{(3)}| \ll |A_{i-1}|^{k \mathcal{C} / \log s} |U_i| ., \]
which then combines with Lemma $\ref{cmb}$ to give us a subset $B_i \subseteq A_{i-1}'$ such that
  \begin{equation} \label{apr1}
 |B_i| \gg |A_{i-1}'|  |A_{i-1}|^{-k \mathcal{C} / \log s} \gg |A_{i-1}|^{1 - k \mathcal{C}/ \log s} \ \ \text{and} \ \ q(B_i) \leq \log (|A_{i-1}|^{k \mathcal{C} / \log s}) + O(1). 
 \end{equation}
 Moreover, since $B_i \subseteq x^{-1} \cdot U_i$, we may also deduce that
 \begin{equation} \label{apr2}
 |B_i^{(m)}/B_i^{(n)}| \ll_{m,n} |A_{i-1}|^{k \mathcal{C}(m+n) / \log s} |U_i|  \ll |A_{i-1}|^{k \mathcal{C}(m+n) / \log s} |A|^k, 
 \end{equation}
 for every $m, n \in \mathbb{N} \cup \{0\}$. We now set $A_i = A_{i-1} \setminus B_i$ and proceed with the $(i+1)^{th}$ step of our algorithm. By way of Lemma $\ref{tym}$, we must have $|A_r| \leq 1$ for some 
 \[ r \ll_{k, s} |A|^{k \mathcal{C}/ \log s} ,\]
 in which case, we would trivially have $M_{s}(A_r) \leq |A_r|^{2s - k}$, thus terminating the algorithm. This gives us a partition of the set $A$ as $A = B_1 \cup \dots \cup B_{r} \cup A_r$, where the sets $A_r, B_1, \dots, B_r$ are pairwise disjoint. We set $C = A_r$ and $B = B_1 \cup \dots \cup B_r$.
 \par
 
 We first proceed to prove the bound on the mixed multiplicative energy $M_{s, \vec{\varphi}}(B)$, and so, just for this part of the proof, we assume that $\varphi_j(0) \neq 0$ for $1 \leq j \leq 2s$. Writing $q = 10k$, this allows us to employ Lemma $\ref{tst}$ to see that
 \[ M_{q, \varphi_j}(B_i) \ll_{q,d} |B_i^{(q)}/B_i^{(q)}|  (d+3)^{16q(B_i) q}  (2q)^{2q(B_i) q} |B_i|^q,\]
 for each $1 \leq i \leq r$ and $1 \leq j \leq 2s$. We may combine this with $\eqref{apr1}$ and $\eqref{apr2}$ so as to obtain the bound
 \[ M_{q, \varphi_j}(B_i) \ll_{q,d} |A|^{k (1 + q \mathcal{C} / \log s) }  |A|^{q k \mathcal{C}_d  \log q / \log s} |B_i|^{q} . \]
Next, we use the remark following Lemma $\ref{wm}$ along with the preceding inequalities to deduce that
 \[ M_{q, \varphi_j}(B)  \ll_{d,q} r^{2q} \sup_{1 \leq i \leq r} M_{q, \varphi_j}(B_i)  \ll_{d,q,s} |A|^{k q \mathcal{C}/ \log s}   |A|^{k (1 + q\mathcal{C}_d  \log q / \log s) }  |B|^{q}    .\]
 It is worth noting that 
\[ |B| \geq |B_1| \gg |A_0|^{1 - k \mathcal{C}/ \log s} = |A|^{1 - k \mathcal{C}/ \log s} \geq |A|^{1/2}, \]
 whenever $\mathcal{D}$ is sufficiently large in terms of $\mathcal{C}$. Thus, the estimate
 \[ M_{q, \varphi_j}(B) \ll_{d,q} |B|^{2k} |B|^{k q \mathcal{C}_d \log q / \log s } |B|^q \ll_{d,q} |B|^{q + 2q/5} \leq |B|^{2q-k}, \]
holds true whenever $\mathcal{D}$ is sufficiently large in terms of $\mathcal{C}_d$.  Noting the fact that for any $x \in \mathbb{R}$ and any $\varphi \in \mathbb{Z}[x]$ satisfying $1 \leq \deg \varphi \leq d$,  there are at most $O_d(1)$ solutions to $x = \varphi(b)$ with $b \in B$,  we deduce that
\[ M_{s, \vec{\varphi}}(B)  \ll_{s,d} M_{s}(\varphi_1(B), \dots, \varphi_{2s}(B)) . \]
This, combines with the preceding discussion,  to give us
 \begin{align*}
  M_{s, \vec{\varphi}}(B) 
  &  \ll_{s,d} M_{s}(\varphi_1(B), \dots, \varphi_{2s}(B)) \ll_{s,d}  \prod_{j=1}^{2s} M_{s}(\varphi_j(B))^{1/2s} \\
  & \leq \prod_{j=1}^{2s} ( |B|^{2s - 2q} M_{q, \varphi_j}(B) )^{1/2s} \ll_{d,s} |B|^{2s - k},
  \end{align*}
where the second and third inequalities follow from the analogues of Proposition \ref{gvup} and Lemma \ref{wm} mentioned in the remark at the end of \S3 respectively.
 \par

We now turn to the case of dealing with the mixed additive energy $E_{s, \vec{\varphi}}(B)$. This, in fact, proceeds in a very similar fashion to the proof for the upper bound on $M_{s, \vec{\varphi}}(B)$, just with the applications of Lemmata $\ref{tst}$ and $\ref{wm}$ replaced by applications of Lemmata $\ref{ayay}$ and $\ref{reprove}$ respectively. In fact, the case of $E_{s, \vec{\varphi}}(B)$ is slightly simpler than the situation where we provide estimates for $M_{s, \vec{\varphi}}(B)$, since the former does not require utilising the upper bound $\eqref{apr2}$ on the many-fold product sets of $B_1, \dots, B_r$.
\par

 Thus, we have proven Theorem $\ref{th3}$ when $A$ is a finite subset of $\mathbb{N}\setminus \mathcal{Z}_{\vec{\varphi}}$ and $\vec{\varphi} \in (\mathbb{Z}[x])^{2s}$. We now reduce the more general case when $A$ is a finite subset of $\mathbb{Q}$ and $\vec{\varphi} \in (\mathbb{Q}[x])^{2s}$ to the aforementioned setting.  As in \S3, we see that upon dilating the set $A$ appropriately, we may reduce the more general case to the setting when $A \subseteq \mathbb{Z}$ and $\vec{\varphi} \in (\mathbb{Z}[x])^{2s}$.  We now focus on the multiplicative setting of Theorem $\ref{th3}$ for the latter case,  and so,  given some finite $A \subseteq \mathbb{Z}$ and some $\vec{\varphi} \in (\mathbb{Z}[x])^{2s}$ such that $\varphi_{j}(0) \neq 0$ for every $1 \leq j \leq 2s$, we define $\vec{\varphi}'$ to satisfy $\varphi_j'(x)= \varphi_j(-x)$ for every $x \in \mathbb{Z}$ and $1 \leq j \leq 2s$. Next, we write $\mathcal{Z} = (\mathcal{Z}_{\vec{\varphi}} \cup \mathcal{Z}_{\vec{\varphi}'})\setminus \{0\}$ and we denote
 \[ A_1 = (A \cap (0, \infty)) \setminus \mathcal{Z} \ \ \text{and} \ \ A_2 = (A \cap (-\infty,0)) \setminus \mathcal{Z} \ \ \text{and} \ \ A_3 = A \cap \mathcal{Z}\ \ \text{and} \ \ A_4 = A \cap \{0\}. \]
 By Theorem $\ref{th3}$, we see that $A_1 = B_1 \cup C_1$ and $-A_2 = (-B_2) \cup (-C_2)$ such that for every $i \in \{1,2\}$, we have $B_i \cap C_i = \emptyset$ as well as
 \[ M_{s}(C_1) \ll_{s,d} |C_1|^{2s - \eta_s}  \ \ \text{and} \ \   M_{s, {\varphi}_j}(B_1) \ll_{s,d} |B_1|^{2s - \eta_s} \ \ (1 \leq j \leq 2s)  \]
 and
 \[ M_{s}(-C_2) \ll_{s,d} |C_2|^{2s- \eta_s}   \ \ \text{and} \ \  M_{s, {\varphi}_j'}(-B_2) \ll_{s,d} |B_2|^{2s - \eta_s}  \ \ (1 \leq j \leq 2s) ,\]
 for some $\eta_s \gg_{d} \log s/ \log \log s$. Noting that $M_{s, {\varphi}_j'}(-B_2) = M_{s, {\varphi}_j}(B_2)$ for every $1\leq j\leq 2s$ and $|A_3|,|A_4| \ll_{d} 1$, we may now write $B = B_1 \cup B_2 \cup A_4$ and $C = C_1 \cup C_2 \cup A_3$, whereupon, the remark following Lemma $\ref{wm}$ implies that
 \[ M_{s, \vec{\varphi}}(B) \ll_{s,d}   \max_{1\leq j \leq 2s} \max\{ M_{s,\varphi_j}(B_1) , M_{s, \varphi_j}(B_2)   \} \ll_{s,d}   |B|^{2s - \eta_s} \ \ \text{and} \ \ M_{s}(C) \ll_{s,d} |C|^{2s - \eta_s}. \]
 A similar strategy maybe followed to resolve the case when $M_{s, \vec{\varphi}}(B)$ is replaced by $E_{s, \vec{\varphi}}(B)$. This concludes our proof of Theorem $\ref{th3}$.

%
%---------------------------------------------------------------------------------------------------------------------------
%---------------------------------------------------------------------------------------------------------------------------
%---------------------------------------------------------------------------------------------------------------------------
%---------------------------------------------------------------------------------------------------------------------------
%---------------------------------------------------------------------------------------------------------------------------
%---------------------------------------------------------------------------------------------------------------------------

\section{Proof of Theorems $\ref{mve}$ and $\ref{fin6}$}

Our aim in this section is to prove Theorem $\ref{mve}$ and its multiplicative variants. We begin by recording the following greedy covering lemma from \cite[Lemma $5$]{HP2021}.
%We will record the following straightforward corollary of \cite[Theorem $1.1$]{PZ2020}.
%
%
%\begin{lemma} \label{pzl}
%For any $\varepsilon >0$ and for any finite set $A \subseteq \mathbb{Z}$ with $|A\cdot A| \leq K|A|$, there exists a subset $A' \subseteq A$ such that $|A'| \geq K^{- 2 / \varepsilon} |A|$ and $q(A') \leq \varepsilon \log |A|$.
%\end{lemma}

\begin{lemma} \label{hph}
Let $A, B$ be subsets of $\mathbb{R} \setminus \{0\}$ such that $|A| \geq 2$ and $|A \cdot B| \leq C|B|$, for some $C \geq 1$. Then there exists a set $S \subseteq A \cdot B^{-1}$ with $|S| \ll C \log |A|$ such that $A \subseteq S \cdot B$. 
\end{lemma}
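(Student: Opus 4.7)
I would prove this via a greedy covering argument. Set $A_0 = A$ and, at stage $i \geq 1$, locate some $s_i \in A \cdot B^{-1}$ with $|s_i \cdot B \cap A_{i-1}| \gtrsim |A_{i-1}|/C$; then set $A_i = A_{i-1} \setminus (s_i \cdot B)$. This yields $|A_i| \leq (1 - \Omega(1/C)) |A_{i-1}|$, so iterating for $r = O(C \log|A|)$ stages depletes $A$, and the collection $S = \{s_1, \dots, s_r\}$ has the required cardinality and covers $A$.

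The main step is the pigeonhole (popularity) estimate producing $s_i$. Consider the multiplication map $(a, \beta) \mapsto a \beta$ on pairs $(a, \beta) \in A_{i-1} \times B^{-1}$; its image lies in $A \cdot B^{-1}$, and by a double count,
\[ \sum_{s \in A_{i-1} \cdot B^{-1}} |s \cdot B \cap A_{i-1}| = |A_{i-1}| \cdot |B|, \]
so the maximum summand is at least $|A_{i-1}| |B|/|A \cdot B^{-1}|$. Provided $|A \cdot B^{-1}| \lesssim C |B|$, this supplies the desired $s_i$. The required control on $|A \cdot B^{-1}|$ is obtained from the hypothesis $|A \cdot B| \leq C|B|$ via the multiplicative Pl\"unnecke--Ruzsa inequality (Lemma \ref{pr21}) combined with Ruzsa's triangle inequality: one has $|A \cdot B^{-1}| \cdot |B| \leq |A \cdot B| \cdot |B \cdot B|$, and Pl\"unnecke bounds $|B \cdot B|$ in terms of $|B|$ and $C$.

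The main obstacle is the mild asymmetry between the hypothesis (which controls $|A \cdot B|$) and the estimate actually needed (which controls $|A \cdot B^{-1}|$); the Pl\"unnecke--Ruzsa detour bridges this, and any resulting polynomial factor in $C$ is absorbed into the implicit constant of the $\ll C \log|A|$ conclusion. The greedy iteration and pigeonhole themselves are routine once this estimate is in hand.
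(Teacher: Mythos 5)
Your greedy skeleton is the right one (it is essentially the argument behind \cite[Lemma 5]{HP2021}, which the paper simply imports rather than proves), but the key popularity step is not established correctly, and the proposed repair is false. Your pigeonhole gives a fibre of size at least $|A_{i-1}||B|/|A_{i-1}\cdot B^{-1}|$, so you need $|A\cdot B^{-1}|\lesssim C^{O(1)}|B|$. The hypothesis $|A\cdot B|\leq C|B|$ does not give this, and in particular the claimed bridge ``Pl\"unnecke bounds $|B\cdot B|$ in terms of $|B|$ and $C$'' is simply wrong: the Pl\"unnecke--Ruzsa inequality with hypothesis $|A\cdot B|\leq C|B|$ controls product sets of $A$, never of $B$. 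Concretely, take $A=\{1,2\}$ and $B$ any multiplicative Sidon set; then $|A\cdot B|\leq 2|B|$ so $C=2$, while $|B\cdot B|\gg |B|^2$, so the Ruzsa-triangle bound $|A\cdot B^{-1}|\leq |A\cdot B||B\cdot B|/|B|$ gives nothing of the required strength. More generally, the best one can extract from the hypothesis along these lines is of the shape $|A\cdot B^{-1}|\leq C^{3}|B|^{2}/|A|$, which is useless when $|A|$ is much smaller than $|B|$; so the step ``provided $|A\cdot B^{-1}|\lesssim C|B|$'' is a genuine gap, not a routine detour.

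The correct way to produce $s_i$ avoids the ratio set entirely: bound the maximal fibre from below by the ratio of the second moment to the first. Writing $r(s)=|s\cdot B\cap A_{i-1}|$ for $s\in A_{i-1}\cdot B^{-1}$, one has $\sum_s r(s)=|A_{i-1}||B|$ and $\sum_s r(s)^2=E^{\times}(A_{i-1},B)$, the multiplicative energy, which by Cauchy--Schwarz satisfies
\begin{equation*}
\sum_s r(s)^2 \;\geq\; \frac{(|A_{i-1}||B|)^2}{|A_{i-1}\cdot B|}\;\geq\;\frac{|A_{i-1}|^2|B|}{C},
\end{equation*}
since $|A_{i-1}\cdot B|\leq |A\cdot B|\leq C|B|$. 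Hence
\begin{equation*}
\max_{s\in A_{i-1}\cdot B^{-1}} r(s)\;\geq\;\frac{\sum_s r(s)^2}{\sum_s r(s)}\;\geq\;\frac{|A_{i-1}|}{C},
\end{equation*}
and the maximising $s$ lies in $A_{i-1}\cdot B^{-1}\subseteq A\cdot B^{-1}$. With this estimate in place, your iteration and the count $|S|\ll C\log|A|$ go through exactly as you describe (elements of $A\cup B$ being nonzero ensures $|s\cdot B|=|B|$, and $|A|\geq 2$ absorbs the ceiling in the number of steps). Note also that the naive pigeonhole on the product set $A_{i-1}\cdot B$ (which has at most $C|B|$ elements) would only cover $A$ by dilates of $B^{-1}$ with dilation parameters in $A\cdot B$, which is not the stated conclusion; the energy argument is what yields translates $s\cdot B$ with $s\in A\cdot B^{-1}$.
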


%We will iterate this lemma to give the following bound. Suppose that $B \subseteq A$ such that $|B| \geq |A|/2$. Then we have that $|B \cdot B| \leq (2K) |B|$ and moreover, there exists a subset $A' \subseteq B$ such that $|A'| \geq (2K)^{- 2/ \varepsilon} |B| \geq (2K)^{- 2/ \varepsilon} |A|/2$ with $q(A') \leq \varepsilon \log |A|$. If $|B| < |A|/2$, we stop this iteration, else we keep iterating. We can do this iteration at most $\ceil{2 (2K)^{2/\varepsilon}}$ times, after which we must have $|B| < |A|/2$. 

We are now ready to prove Theorem $\ref{mve}$.

%\begin{theorem} \label{phw1}
%Let $0 < \gamma < 1$ and $K \geq 1$ be real numbers and $d,s$ be natural numbers. Moreover, let $A$ be a finite subset of $\mathbb{Q}$ such that $|A \cdot A| = K|A|$ and let $\vec{\varphi} \in \mathbb{Q}[x]^{2s}$ satisfy $1 \leq \deg \varphi_i \leq d$ for each $1 \leq i \leq 2s$ and let $\frak{a}: \mathbb{N} \to [0, \infty)$ be a function. Then 
%\[ E_{s, \frak{a}, \vec{\varphi}}(A) \ll_{s} K^{Cs} |A|^{\gamma s}( \log |A|)^{2s} (\sum_{a \in A} \frak{a}(a)^2 )^s , \]
%where $C = 2 + 4 \gamma^{-1} (4 \log (d^2 + 2) + 2 \log 2s)$.
%\end{theorem}

\begin{proof}[Proof of Theorem $\ref{mve}$]
We begin noting that Theorem $\ref{mve}$ holds trivially when $|A| = 1$,  and so,  we may assume that $|A| \geq 2$.  We now apply Lemma $\ref{pr21}$ to infer that $|A^{(3)}| \leq K^3 |A|$, whence, we may apply Lemma $\ref{cmb}$ to deduce the existence of a set $A' \subseteq A$ with $|A'| \geq  |A|/K^3$ and $q(A') \leq 3 \log K + O(1)$. This implies that $|A \cdot A'| \leq K|A| \leq K^{4} |A'|$, whenceforth, we may apply Lemma $\ref{hph}$ to obtain a set $S \subseteq \mathbb{Q} \setminus \{0\}$ such that $|S| \ll K^{4} \log |A|$ and $A \subseteq S \cdot A'$. In particular, let $S = \{s_1, \dots, s_r\}$, where $r = |S|$, and let $A_i = A \cap s_i \cdot A'$.  Using orthogonality and applying H\"{o}lder's inequality as in \eqref{hl3},  we may deduce that 
\begin{align} \label{list5}
E_{s, \frak{a}, \varphi}(A)   
 & =     \int_{[0,1)} | \sum_{a \in A} \frak{a}(a) e( \alpha \varphi(a) ) |^{2s} d \alpha \nonumber    \\
&   \leq r^{2s} \sup_{1 \leq i \leq r}   \int_{[0,1)} | \sum_{a \in A_i} \frak{a}(a) e( \alpha \varphi(a) ) |^{2s} d \alpha  \nonumber   \\
& =   r^{2s} \sup_{1 \leq i \leq r}       E_{s, \frak{a}, \varphi}(A_i). 
\end{align}
Finally, since for each $1 \leq i \leq r$, the set $A_i$ lies in a dilate of the set $A'$, we must have $q(A_i) \leq q(A') \leq 3 \log K + O(1)$. This allows us to apply Lemma $\ref{ayay}$ to deduce that
\begin{align*}
 E_{s, \frak{a}, \varphi}(A)^{1/s} 
 & \ll_{s,d} (K^{4} \log |A|)^{2} (d^2+2)^{12 \log K} (2s)^{6 \log K} \sum_{a \in A} \frak{a}(a)^2 \\
 & \ll_{s,d} K^{C}(\log |A|)^2 \sum_{a \in A} \frak{a}(a)^2,
 \end{align*}
where $C= 8 + 12 \log (d^2 + 2) + 6 \log (2s)$. 
\end{proof}

The proof of Theorem $\ref{fin6}$ follows mutatis mutandis, and we make some brief remarks concerning this below. Adapting the methods of \S5 along with the results of this section, that is, following ideas from \S5 while replacing any usage of Lemma $\ref{chang}$ with Lemma $\ref{ob2}$ as well as substituting the application of H\"{o}lder's inequality in $\eqref{list5}$ with an application of Lemma $\ref{wm}$, we may obtain a similar result for $J_{s, \frak{a}, {\varphi}}(A)$, where $\varphi \in \mathbb{Q}[x]$ with $\varphi(0) \neq 0$. We may further use a combination of Lemmata $\ref{tst}$ and $\ref{pr21}$ in place of Lemma $\ref{ob2}$ to obtain an estimate for $M_{s, \frak{a}, {\varphi}}(A)$, when $|A\cdot A| \leq K |A|$. This finishes the proof of Theorem $\ref{fin6}$ when $\vec{\varphi} = (\varphi, \dots, \varphi)$ for some $\varphi \in \mathbb{Q}[x]$ with $\varphi(0) \neq 0$.  The more general case may then be deduced from this special case by employing Proposition $\ref{gvup}$ and the ideas involved therein.
\par

\section{Additive and multiplicative Sidon sets}

We dedicate this section to proving Theorem $\ref{sid1}$ and Proposition $\ref{bwex}$. In our proof of Theorem $\ref{sid1}$, we will use the following amalgamation of \cite[Lemmata 5.1 and 5.2]{JM2022}.

\begin{lemma} \label{brns} 
Given a finite set $A \subseteq \mathbb{N}$ and a natural number $s \geq 2$ and some $c>0$, if $E_{s}(A) \ll_{s} |A|^{2s - 2 + 1/s - c}$, then there exists a $B_{s}^{+}[1]$ set $X \subseteq A$ with $|X| \gg_{s} |A|^{1/s + c/2s}$. Similarly, if we have $M_{s}(A) \ll_{s} |A|^{2s - 2 + 1/s - c}$, then there exists a $B_{s}^{\times}[1]$ set $Y \subseteq A$ satisfying $|Y| \gg_{s} |A|^{1/s + c/2s}$
\end{lemma}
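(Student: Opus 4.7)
The plan is to use a probabilistic alteration (random subset plus deletion) argument, applied in parallel to the additive and multiplicative cases. For the additive statement, I would sample a random $X \subseteq A$ by including each element independently with probability $p \in (0,1]$, and call a tuple $(a_1,\ldots,a_{2s}) \in A^{2s}$ \emph{bad} if $a_1 + \cdots + a_s = a_{s+1} + \cdots + a_{2s}$ and the multisets $\{a_1,\ldots,a_s\}$ and $\{a_{s+1},\ldots,a_{2s}\}$ are distinct. The total number of bad tuples in $A^{2s}$ is bounded by $E_s(A)$, so the expected number of bad tuples lying in $X^{2s}$ is at most $p^{2s} E_s(A)$. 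I would then select a realisation with $|X| \geq p|A|/2$ and with at most $4 p^{2s} E_s(A)$ bad tuples, and delete the first coordinate from each surviving bad tuple to produce a subset $X' \subseteq X$ with $|X'| \geq p|A|/2 - 4 p^{2s} E_s(A)$. This $X'$ is a $B_s^{+}[1]$ set, since any bad tuple surviving in $X'^{2s}$ would still have its first coordinate in $X'$, contradicting the deletion rule.

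Next I would balance the two terms by choosing $p$ so that $p^{2s-1} \asymp_s |A|/E_s(A)$. After substituting $E_s(A) \leq C_s |A|^{2s-2+1/s-c}$ from the hypothesis, this yields
\[ |X'| \gg_s p|A| \gg_s |A|^{1 + (3-2s-1/s+c)/(2s-1)} = |A|^{1/s + c/(2s-1)} \geq |A|^{1/s + c/(2s)}, \]
where the last inequality uses $c/(2s-1) \geq c/(2s)$. The constraint $p \leq 1$ holds automatically from the diagonal lower bound $E_s(A) \geq |A|^s$ (coming from the solutions $a_{i+s} = a_i$), which gives $|A|/E_s(A) \leq |A|^{1-s} \leq 1$ for $s \geq 2$.

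The multiplicative statement follows by an identical argument, replacing $E_s(A)$ with $M_s(A)$ and the additive equation with $a_1 \cdots a_s = a_{s+1} \cdots a_{2s}$; the diagonal bound $M_s(A) \geq |A|^s$ (valid since $A \subseteq \mathbb{N}$ contains no zeros) again secures $p \leq 1$, and the exponent arithmetic is unchanged. I do not anticipate any substantive obstacle: the only delicate point is verifying that single-coordinate deletion actually destroys every bad configuration, which is immediate from the definition, and the remainder is a routine optimisation of $p$ against the hypothesised energy bound.
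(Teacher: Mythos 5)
There is a genuine gap at the very first step of your expectation computation. A bad tuple $(a_1,\dots,a_{2s})$ need not have $2s$ distinct entries, and the probability that all of its entries survive into $X$ is $p^{k}$, where $k$ is the number of \emph{distinct} values among its coordinates, not $p^{2s}$. Violations of the $B_{s}^{+}[1]$ property are exactly pairs of distinct multisets with equal sums, and these include highly degenerate configurations such as $2x = y+z$ padded with $s-2$ summands common to both sides: such a configuration involves only about $s+1$ distinct elements and survives with probability about $p^{s+1}$, while the number of such configurations can a priori be as large as $|A|^{2}\cdot |A|^{s-2}=|A|^{s}$ (your hypothesis bounds $E_{s}(A)$, i.e.\ the count for the non-degenerate pattern, and does not directly bound these repeated-entry counts). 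Requiring $p^{s+1}|A|^{s}\ll p|A|$ forces $p|A|\ll |A|^{1/s}$, so with only trivial bounds on the degenerate patterns the deletion argument cannot produce a Sidon set larger than $|A|^{1/s}$, and the gain $c/(2s)$ --- which is the entire content of the lemma --- is lost. Hence the claimed bound ``expected number of surviving bad tuples $\leq p^{2s}E_{s}(A)$'' is false, and the genuinely delicate point is not whether single-coordinate deletion destroys bad configurations (it does), but how to control the bad tuples with coincident entries.

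The approach is salvageable, but only with additional substantive input: one must classify the $O_{s}(1)$ coincidence patterns, bound the number of solutions for each degenerate pattern using the energy hypothesis via moment inequalities (for instance log-convexity bounds of the shape $E_{l}(A)\leq |A|^{(s-l)/(s-1)}E_{s}(A)^{(l-1)/(s-1)}$, and H\"older-type bounds for equations with repeated variables such as $2x=y+z$), weight each pattern by its correct survival probability $p^{k}$, and then recheck the optimisation in $p$ uniformly over the admissible range of $c$. This bookkeeping is precisely what makes the cited Lemmata 5.1 and 5.2 of \cite{JM2022} nontrivial; the present paper does not reprove them but invokes them directly, and your sketch omits exactly that step. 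The same issue occurs verbatim in your multiplicative case.
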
 

With this result in hand, we now present the proof of Theorem $\ref{sid1}$. 

\begin{proof}[Proof of Theorem $\ref{sid1}$]
We begin by applying Theorem $\ref{th3}$ to obtain a decomposition $A = B \cup C$, with $B, C$ being disjoint and satisfying the conclusion of Theorem $\ref{th3}$. We divide our proof into two cases, the first of these being if $|C| \geq |A|/2$. Writing $C_1 = C \cup (0, \infty)$ and $C_2 = C \cup (-\infty, 0)$, we see that either $|C_1| \geq |C|/3$ or $|C_2| \geq |C|/3$. If the first inequality holds, then we see that
\[ M_{s}(C_1) \leq M_{s}(C) \ll_{s} |C|^{2s - \eta_s} \ll_{s} |C_1|^{2s - \eta_s},\]
whence, applying Lemma $\ref{brns}$ delivers a $B_{s}^{\times}[1]$ subset $Y \subseteq C$ such that
\[ |Y| \gg_{s} |C_1|^{\eta_s/4s} \gg_{s} |A|^{\eta_s/4s} .  \]
On the other hand, if $|C_2| \geq |C|/3$, then we may again employ Lemma $\ref{brns}$ to obtain a large $B_{s}^{\times}[1]$ subset $Y'$ of $-C_2$, which, in turn, gives us a large $B_{s}^{\times}[1]$ subset $-Y'$ of $C_2$.
\par

On the other hand, if $|B| \geq |A|/2$, then we may define $B_1 = \{ b \in B \ | \ \varphi(b) \in (0, \infty)\}$ and $B_2 = \{ b \in B \  \ \varphi(b) \in (- \infty,0)\}$. As before, either $|B_1| \geq |B|/3$ or $|B_2| \geq |B|/3$. If the first inequality holds, then we have that
\[  E_{s}(\varphi(B_1))    \leq  E_{s, \varphi}(B_1)  \ll_{s,d} |B|^{2s - \eta_s} \ll_{s,d} |B_1|^{2s - \eta_s} \ll_{s,d} |\varphi(B_1)|^{2s - \eta_s} , \]
whence Lemma $\ref{brns}$ yields a $B_{s}^{+}[1]$ set $X' \subseteq \varphi(B_1)$ such that $|X'| \gg_{s} |\varphi(B_1)|^{\eta_s/4s}$. For every $x \in X'$, fix some $b_x \in B$ such that $\varphi(b_x) = x$ and let $X = \{ b_x \ | \ x \in X' \}$. Then we have that 
\[ |X| = |X'| \gg_{s} |\varphi(B_1)|^{\eta_s/4s} \gg_{s,d} |B_1|^{\eta_s/4s} \gg_{s,d} |A|^{\eta_s/4s}, \]
and so, we finish the subcase when $|B_1| \geq |B|/3$. We may proceed similarly in the case when $|B_2| \geq |B_1|/3$ to finish the proof of additive part of Theorem $\ref{sid1}$. As for the multiplicative case, since $\varphi(0) \neq 0$, we see that $M_{s, \varphi}(B) \ll_{s,d} |B|^{2s - \eta_s}$. We may now continue as in the additive case to finish our proof of Theorem $\ref{sid1}$. 
\end{proof}

Thus, we have shown, in the form of Theorem $\ref{sid1}$, that strong low energy decompositions deliver large additive and multiplicative Sidon sets. We will now show that, roughly speaking, such an implication may be reversed as well. 

\begin{Proposition} \label{rev4}
Let $s \geq 2$, let $\vec{\varphi} \in (\mathbb{Z}[x])^{2s}$ satisfy $\varphi_1 = \dots = \varphi_{2s} = \varphi$, for some $\varphi \in \mathbb{Z}[x]$ with $\deg \varphi = d \geq 1$ and $\varphi(0) \neq 0$, and let $\delta_s$ be defined as in Theorem $\ref{sid1}$. Then, given any finite set $A \subseteq \mathbb{Z}$, we can find disjoint sets $B, C \subseteq A$ such that $A = B \cup C$ and
\[ \max\{ E_{s, \vec{\varphi}}(B), M_{s, \vec{\varphi}}(B), M_{s}(C)\} \ll_{s} |A|^{2s - \delta_s} . \]
\end{Proposition}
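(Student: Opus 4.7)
The plan is to run two parallel greedy iterations of Theorem~\ref{sid1}, one in its additive form and one in its multiplicative form, and combine their outputs by intersection. Starting from $A_0 := A$ in the additive iteration, at each step $i \ge 1$ I apply Theorem~\ref{sid1} to $A_{i-1}$: if $|A_{i-1}| \le |A|^{1 - \delta_s/(4s)}$ I halt and dump $A_{i-1}$ into $C_{\mathrm{ad}}$, otherwise I extract the Sidon subset of size $\gg_{s,d} |A_{i-1}|^{\delta_s/s}$ furnished by the theorem, adjoin it to $B_{\mathrm{ad}}$ or $C_{\mathrm{ad}}$ according to whether its type is $B_{s,\varphi}^+[1]$ or $B_s^\times[1]$, and set $A_i := A_{i-1}$ minus the extracted set. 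Analogously the multiplicative iteration yields $A = B_{\mathrm{mu}} \cup C_{\mathrm{mu}}$ with $B_{\mathrm{mu}}$ a disjoint union of $B_{s,\varphi}^\times[1]$ subsets. Setting $B := B_{\mathrm{ad}} \cap B_{\mathrm{mu}}$ and $C := A \setminus B = C_{\mathrm{ad}} \cup C_{\mathrm{mu}}$, monotonicity of energies gives $E_{s,\vec\varphi}(B) \le E_{s,\varphi}(B_{\mathrm{ad}})$ and $M_{s,\vec\varphi}(B) \le M_{s,\varphi}(B_{\mathrm{mu}})$, reducing the problem to bounding $E_{s,\varphi}(B_{\mathrm{ad}})$, $M_{s,\varphi}(B_{\mathrm{mu}})$, and $M_s(C)$ separately.

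The core calculation bounds $E_{s,\varphi}(B_{\mathrm{ad}}) = E_{s,\varphi}(\bigcup_i X_i)$ using the Sidon estimate $E_{s,\varphi}(X_i) \le s!\,|X_i|^s$ for each extracted $X_i$. I dyadically group the indices by $|X_i| \in [2^k, 2^{k+1})$ into $O(\log |A|)$ classes $I_k$; disjointness forces $|I_k| \le |A|/2^k$, while the stopping rule yields $2^k \gg |A|^{\delta_s/s - \delta_s^2/(4s^2)}$. The $E_{s,\varphi}$-analogue of Lemma~\ref{reprove} (whose proof goes through verbatim with the exponential sum $\sum_a e(\alpha \varphi(a))$ in place of $\sum_a e(\alpha a)$) applied within each class yields
\[
E_{s,\varphi}\Bigl(\bigcup_{i \in I_k} X_i\Bigr) \ll_s |I_k|^{2s}\,s!\,(2^{k+1})^s \ll_s \frac{|A|^{2s}}{2^{sk}} \ll_s |A|^{2s - \delta_s + \delta_s^2/(4s)},
\]
and a further application across the $O(\log|A|)$ classes gives $E_{s,\varphi}(B_{\mathrm{ad}}) \ll_s (\log|A|)^{2s}\,|A|^{2s - \delta_s + \delta_s^2/(4s)}$. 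The same scheme with Lemma~\ref{wm} in place of Lemma~\ref{reprove} produces an analogous bound on $M_{s,\varphi}(B_{\mathrm{mu}})$, and two applications of Lemma~\ref{wm}---once within each of $C_{\mathrm{ad}}, C_{\mathrm{mu}}$ and once across their union---control $M_s(C)$.

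The main obstacle is the accumulated loss in the exponent: a $(\log|A|)^{O(s)}$ factor from the dyadic partitioning together with an extra $|A|^{\delta_s^2/(4s)}$ from the stopping threshold. Both can be absorbed by a constant-factor shrinking of $\delta_s$, producing a final bound of the form $\ll_s |A|^{2s - \delta_s'}$ with $\delta_s' \gg \log s/\log \log s$, which suffices since $\delta_s$ in Theorem~\ref{sid1} is defined only up to this asymptotic order.
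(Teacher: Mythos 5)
Your argument is essentially sound, but it takes a genuinely different route from the paper, so let me compare. The paper runs a single greedy iteration of Theorem~\ref{sid1}, and the key technical device is that at each step it \emph{trims} the extracted Sidon set to size exactly $D|A_{i-1}|^{\delta_s/s}$ (a subset of a Sidon set is again Sidon); it then iterates until $|A_r|\leq 1$, invoking Lemma~\ref{tym} to bound the number of steps by $\ll_s |A|^{1-\delta_s/s}$. Because every piece has size $\ll |A|^{\delta_s/s}$ and there are $\ll |A|^{1-\delta_s/s}$ of them, a single application of the union bound (Lemma~\ref{reprove}, resp.\ Lemma~\ref{wm}) gives $r^{2s}\sup_i E_{s,\vec{\varphi}}(X_i)\ll |A|^{2s-2\delta_s}\cdot|A|^{\delta_s}=|A|^{2s-\delta_s}$ with no dyadic grouping, no stopping threshold, no leftover remainder, and hence no loss in the exponent: the paper obtains the stated $\delta_s$ itself. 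Your version, which keeps the extracted sets at their natural (possibly large) size, is forced into the dyadic-class argument and the halting rule, and these cost you the $(\log|A|)^{O(s)}$ and $|A|^{\delta_s^2/(4s)}$ factors plus the trivially-bounded remainder, so you only prove the bound with some $\delta_s'\asymp\delta_s$ rather than with the $\delta_s$ of Theorem~\ref{sid1} as literally stated; trimming each extracted set to size $\Theta(|A_{i-1}|^{\delta_s/s})$ and using Lemma~\ref{tym} would let you delete the dyadic step and the threshold entirely and recover the exact exponent. Two smaller bookkeeping points: when you bound $M_s(C_{\mathrm{ad}})$ you must treat the halted remainder as a single block in a bounded union (its trivial bound $|R|^{2s-1}$ times the $r^{2s}$ factor from all the Sidon pieces would otherwise exceed $|A|^{2s}$) --- this works but is not spelled out; and your dyadic computation implicitly uses that pieces within one iteration are pairwise disjoint, which is fine, but be explicit that the two iterations' pieces are never mixed in any single union bound.

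On the other hand, your device of running two parallel decompositions (one via the first part of Theorem~\ref{sid1}, one via the second) and setting $B=B_{\mathrm{ad}}\cap B_{\mathrm{mu}}$, $C=C_{\mathrm{ad}}\cup C_{\mathrm{mu}}$, with monotonicity of $E_{s,\vec{\varphi}}$ and $M_{s,\vec{\varphi}}$ under passing to the subset $B$, is a clean and fully explicit way to get the three bounds \emph{simultaneously}; the paper handles this point only by an appeal to ``mutatis mutandis'', which in effect requires a second refinement stage of the same kind, so your intersection trick is a legitimate (and arguably more transparent) alternative there. In short: correct in substance, different in mechanism, and weaker only by a constant factor in $\delta_s$ that the paper's trimming argument avoids.
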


\begin{proof}
Let $A$ be a finite set of integers. We may iteratively apply Theorem $\ref{sid1}$ to the set $A$ to obtain an absolute constant $D = D_{d,s}>0$ and sets $A =A_0 \supseteq A_1 \supseteq  \dots \supseteq A_r$, with $|A_{i-1} \setminus A_i| = D |A_{i-1}|^{\delta_s/s}$ for every $1 \leq i \leq r$, where the set $A_{i-1} \setminus A_i$ is either a $B_{s}^{+}[1]$ set or a $B_{s}^{\times}[1]$ set. A straightforward application of Lemma $\ref{tym}$ implies that $|A_r| \leq 1$ for some $r \ll_s |A|^{1 - \delta_s/s}$. Thus, we may partition $A$ as 
\[ A = (\cup_{1 \leq i \leq r_1} X_i )\cup (\cup_{1 \leq j \leq r_2} Y_j),\]
 where $X_i$ is a $B_{s, \varphi}^{+}[1]$ set for every $1 \leq i \leq r_1$, the set $Y_j$ is a $B_{s}^{\times}[1]$ set for every $1 \leq j \leq r_2$, the sets $X_1, \dots, X_{r_1}, Y_1, \dots, Y_{r_2}$ are pairwise disjoint and $r_1 + r_2 \ll_s |A|^{1 - \delta_s/s}$. Writing $B = \cup_{1 \leq i \leq r_1} X_i$, we may apply Lemma $\ref{reprove}$ to deduce that
 \[ E_{s, \vec{\varphi}}(B) \ll r_1^{2s} \sup_{1 \leq i \leq r_1} E_{s, \vec{\varphi}}(X_i) \ll_{s,d} |A|^{2s - 2\delta_s} \sup_{1 \leq i \leq r_1} |X_i|^{s}  \ll_{s,d} |A|^{2s - \delta_s} .   \]
One may similarly write $C = \cup_{1 \leq j \leq r_2} Y_j$ and apply Lemma $\ref{wm}$ suitably to obtain the bound $M_{s}(C) \ll_{s,d} |A|^{2s - \delta_s}$. Finally, applying this circle of ideas mutatis mutandis, with $M_{s, \vec{\varphi}}(B)$ replacing $E_{s, \vec{\varphi}}(B)$,  allows us to conclude the proof of Proposition $\ref{rev4}$.
\end{proof}

%In fact, this may be combined with Lemma $\ref{brns}$ to show that whenever $\vec{\varphi} = (\varphi, \dots, \varphi)$ is some element in $\mathbb{Z}[x]^{2s}$, then $\eta_s \geq \delta_s > \eta_s/2$.
%\par

We end this section by recording the proof of Proposition $\ref{bwex}$.

%, whereupon, we begin by noting some standard graph theoretic definitions. Thus, given a graph $G$, we will utilise $V(G)$ and $E(G)$ to denote the vertex set of $G$ and the set of edges of $G$ respectively. Next, given a bipartite graph $H$ and integers $m,n$, the asymmetric bipartite Tur\'an number $\mathrm{ex}(m,n,H)$ of $H$ denotes the maximum number of edges in an $m$ by $n$ bipartite graph that does not contain $H$ as a subgraph. For our purposes, we will set $H = C_{2s}$ for some $s \in \mathbb{N}$, where $C_{2s}$ denotes a $2s$-cycle, that is, $V(C_{2s}) = \{v_1, \dots, v_{2s}\}$ and $E(C_{2s}) = \{ (v_1, v_2), (v_2, v_3) , \dots, (v_{2s}, v_1) \}$. We now record a result of Naor and Verstra\"{e}te \cite{NV05} on bounds for $\mathrm{ex}(m,n,H)$.
%
%\begin{lemma}\label{lmcf}
%Let $m\leq n$ be natural numbers and let $s\geq 2$ be an even integer. Then 
%\[
%\mathrm{ex}(m,n,C_{2s})\ll_{s} m^{\frac{s+2}{2s}}n^{\frac12}+m+n.
%\]
%\end{lemma}

\begin{proof}[Proof of Proposition $\ref{bwex}$]
Let $d,s \in \mathbb{N}$ satisfy $s \geq 10d(d+1)$ and write 
\[ A:=A_{m, n} = \{ (2i+1) \cdot 2^j \ | \ 1 \leq  i \leq m \  \text{and} \ 1 \leq j \leq n \}, \]
for some $n,m \in \mathbb{N}$ to be fixed later. We will first show that for appropriate choices of $m,n$, every $B \subseteq A$ with $|B| \geq |A|/2$ and every $\varphi \in \mathbb{Z}[x]$ with $\deg \varphi = d$ satisfy
\[ E_{s, \vec{\varphi}}(B) , M_{s}(B) \gg_{s, \vec{\varphi}} |A|^{s + s/3 - (d^2 + d + 2)/6} , \]
where $\vec{\varphi} = (\varphi, \dots, \varphi)$. We begin by observing that $|B^{(s)}| \leq |A^{(s)}| \ll_{s} m^{s} n$, which, together with $\eqref{csap}$, gives us 
\begin{equation} \label{onlyu}
 M_{s}(B) \geq |B|^{2s} |B^{(s)}|^{-1} \gg_{s} m^{s} n^{2s-1} . 
 \end{equation}
 \par
 
We now turn to analysing $E_{s, \vec{\varphi}}(B)$, and so, we write ${S}_j = \{ 2^j \cdot (2i+1) \ | \ 1 \leq i \leq m \}$ and note that $A = S_1 \cup \dots \cup S_n$ as well as that $|A| = mn$. Next, setting
\[ \mathcal{M} = \{ j \in \mathbb{N} \ | \ |B \cap S_j| \geq |B|/4n \}, \]
we deduce that
\begin{equation} \label{ny1}
 \sum_{j \in \mathcal{M}} |B \cap S_j| = |B| - \sum_{j \notin \mathcal{M}} |B \cap S_j| > |B| - |B|/4 \geq 3|B|/4. 
 \end{equation}
Furthermore, writing $B_j = B \cap S_j$ for every $j \in \mathcal{M}$ and $\varphi(x) = a_0 + a_1 x + \dots + a_d x^d$ for some $a_0, \dots, a_d \in \mathbb{Z}$ with $a_d \neq 0$, we discern that the set $s \varphi (B_j)$ is contained in $s \varphi (S_j)$, which itself is a subset of the set
\[ T =  \{ sa_0 + a_1 2^j (x_1 + \dots + x_s) + \dots + a_d 2^{jd}(x_1^d + \dots + x_s^d) \ | \ x_1, \dots, x_s \in \{3,5, \dots, 2m+1\}  \}.  \]
In particular, we have that
\[ |T| \leq \prod_{i=1}^{d}  (s |a_i| (2m+1)^i) \ll_{s, \varphi} m^{d(d+1)/2} ,  \]
whence, applying $\eqref{csap}$ again gives us
\[ E_{s, \varphi}(B_j) \geq E_{s}(\varphi(B_j)) \gg_{d,s}  |B_j|^{2s} |s \varphi (B_j)|^{-1} \gg_{d,s} |B_j| (|B|/4n)^{2s-1}  m^{-d(d+1)/2} .    \]
Combining this with $\eqref{ny1}$ then allows us to infer that
\[ E_{s}(B) \geq \sum_{j \in \mathcal{M}} E_{s}(B_j) \gg_{d,s} (|B|/4n)^{2s-1} m^{-d(d+1)/2} \sum_{j \in \mathcal{M}} |B_j| \gg_{d,s} m^{2s  - d(d+1)/2} n .    \]
We may now optimise this with the inequality recorded in $\eqref{onlyu}$, which gives us that $m^{s - d(d+1)/2} =  n^{2s - 2}$, that is, $n^{3s - d(d+1)/2 - 2} = |A|^{s - d(d+1)/2}$. Here, choosing $n$ to be appropriately large in terms of $N$ already gives us $|A| \gg N$. Moreover, substituting the value of $n$ into the preceding set of inequalities yields the bound
\[ E_{s, \vec{\varphi}}(B), M_{s}(B) \gg_{s} |B|^{s} |B|^{ \frac{(s - d(d+1)/2)(s-1)}{3s - d(d+1)/2 - 2}}. \]
An elementary computation now shows that
\[ \frac{(s - d(d+1)/2)(s-1)}{3s - d(d+1)/2 - 2} \geq \frac{s-1}{3} \bigg( 1  - \frac{d (d+1)}{2s} \bigg) \geq \frac{s}{3}  - \frac{d(d+1) + 2}{6}  \]
whenever $s \geq 10d(d+1)$, consequently proving the first part of Proposition $\ref{bwex}$.
\par
 
For our second part, let $s \geq 2$ be an even integer and let $P$ be the set consisting of the first $\ceil{N^{\frac{s}{2s+2}}}$ prime numbers and let $Q$ be the set consisting of the next $\ceil{N^{\frac{s+2}{2s+2}}}$ prime numbers. We further define $A' := A_{N}' = P \cdot Q,$ whence, we have the estimate $|A'| \gg N$. In \cite[Section 2]{JM2022}, applying graph theoretic results from \cite{NV05}, it was shown that the largest $B_{s}^{\times}[1]$ subset $Y$ of $A'$ satisfies $|Y| \ll_{s} |A'|^{\frac{1}{2} + \frac{1}{s+2}}$, thus, we only focus on the additive case here. By way of the prime number theorem, we may deduce that $A' \subseteq \{1, \dots, M\}$ for some $M \ll N (\log N)^2$, whence, writing $X$ to be the  largest $B_{s, \varphi}^{+}[1]$ subset of $A'$, we get that
 \[|X|^{s} \ll_{s,d} |s \varphi(X)| \leq |s \varphi(A')| \leq |s \varphi(\{1, \dots, M\})| \ll_{s, \varphi} M^{d} \ll_{s,d} N^{d} (\log N)^{2d} . \]
This dispenses the desired bound, and so, we conclude our proof of Proposition $\ref{bwex}$.
\end{proof}

%---------------------------------------------------------------------------------------------------------------------------
%---------------------------------------------------------------------------------------------------------------------------
%---------------------------------------------------------------------------------------------------------------------------
%---------------------------------------------------------------------------------------------------------------------------
%---------------------------------------------------------------------------------------------------------------------------
%---------------------------------------------------------------------------------------------------------------------------

\bibliographystyle{amsbracket}
\providecommand{\bysame}{\leavevmode\hbox to3em{\hrulefill}\thinspace}

\end{document}